\numberwithin{equation}{section}
\newtheorem{Proposition}[equation]{Proposition}
\newtheorem{Lemma}[equation]{Lemma}
\newtheorem{Theorem}[equation]{Theorem}
\newtheorem{Corollary}[equation]{Corollary}
\theoremstyle{definition}  
\newtheorem{Definition}[equation]{Definition}
\newtheorem{Remark}[equation]{Remark}
\newcommand\Comment[2][\relax]{\space\par\medskip\noindent%
	\fbox{\begin{minipage}{\textwidth}\textbf{Comment\ifx\relax#1\else---#1\fi}\newline%
			#2\end{minipage}}\medskip
}
\def\balpha{\text{\boldmath$\alpha$}}
\def\bl{\text{\boldmath$l$}}
\def\bp{\text{\boldmath$p$}}
\def\bq{\text{\boldmath$q$}}
\def\bs{\text{\boldmath$s$}}
\def\bt{\text{\boldmath$t$}}
\def\bc{\text{\boldmath$c$}}
\def\br{\text{\boldmath$r$}}
\def\b1{\text{\boldmath$1$}}
\def\ba{\text{\boldmath$a$}}
\def\bb{\text{\boldmath$b$}}
\def\bw{\text{\boldmath$w$}}
\def\bu{\text{\boldmath$u$}}
\def\bv{\text{\boldmath$v$}}
\def\bx{\text{\boldmath$x$}}
\def\by{\text{\boldmath$y$}}
\def\balpha{\text{\boldmath$\alpha$}}
\def\biota{\text{\boldmath$\iota$}}
\def\coproduct{\reflectbox{\rotatebox[origin=c]{180}{${\tt\Delta}$}}}
\def\bsi{\text{\boldmath$\sigma$}}
\def\btau{\text{\boldmath$\tau$}}
\def\a{\mathfrak{a}}
\def\c{\mathfrak{c}}
\def\pmod#1{\text{ }(\text{\rm mod } #1)\,}
\newcommand{\Hom}{\operatorname{Hom}}
\newcommand{\id}{\operatorname{id}}
\newcommand{\head}{\operatorname{head}}
\newcommand{\Stab}{\operatorname{Stab}}
\newcommand{\Z}{\mathbb{Z}}
\newcommand{\F}{\mathbb{F}}
\newcommand{\0}{{\bar 0}}
\renewcommand{\1}{{\bar 1}}
\def\eps{{\varepsilon}}
\def\phi{{\varphi}}
\newcommand{\Triple}{{\mathcal T}}
\newcommand{\ga}{\gamma}
\newcommand{\Ga}{\Gamma}
\newcommand{\la}{\lambda}
\newcommand{\La}{\Lambda}
\def\Si{\mathfrak{S}}
\newcommand{\si}{\sigma}
\newcommand{\om}{\omega}
\newcommand{\Om}{\Omega}
\newcommand{\de}{\delta}
\newcommand{\De}{\Delta}
\newcommand{\ka}{\kappa}
\def\id{\mathop{\mathrm {id}}\nolimits}
\newcommand{\tr}{{\mathsf {con}}}
\newcommand{\Sym}{{\mathrm {Sym}}}
\newcommand{\D}{{\mathscr D}}
\renewcommand{\mod}{\bmod \,}
\newcommand{\EZig}{{\mathsf Z}}
\renewcommand{\Alph}{{\mathscr A}}
\def\col{{\operatorname{col}}}
\def\letter{{\operatorname{let}}}
\newcommand{\Std}{\operatorname{Std}}
\def\Brackets#1{[ #1 ]}
\def\Par{{\mathscr P}}
\def\b{\mathfrak{b}}
\def\k{\Bbbk}
\def\T{\text{\boldmath$T$}}
\def\Stab{\text{\boldmath$S$}}
\def\spa{\operatorname{span}}
\def\op{{\mathrm{op}}}
\def\onto{{\twoheadrightarrow}}
\def\mod#1{#1\!\operatorname{-mod}}
\def\iso{\stackrel{\sim}{\longrightarrow}}
\def\X{{\mathcal X}}
\def\Y{{\mathcal Y}}
\def\ch{\operatorname{ch}}
\def\lan{\langle}
\def\ran{\rangle}
\def\Seq{\operatorname{Tri}}
\def\bla{\text{\boldmath$\lambda$}}
\def\bmu{\text{\boldmath$\mu$}}
\def\bnu{\text{\boldmath$\nu$}}
\def\bga{\text{\boldmath$\gamma$}}
	\gdef\set#1{\mathinner{\lbrace\,{\mathcode`\|"8000%
				\let|\midvert #1}\,\rbrace}}
\def\midvert{\egroup\mid\bgroup}
\colorlet{darkgreen}{green!50!black}
\tikzset{dots/.style={very thick,loosely dotted},
	greendot/.style={fill,circle,color=darkgreen,inner sep=1.5pt,outer sep=0},
	blackdot/.style={fill,circle,color=black,inner sep=1.5pt,outer sep=0},
	graydot/.style={fill,circle,color=gray,inner sep=1.1pt,outer sep=0}
}
\def\greendot(#1,#2){\node[greendot] at(#1,#2){}}
\def\blackdot(#1,#2){\node[blackdot] at(#1,#2){}}
\def\graydot(#1,#2){\node[graydot] at(#1,#2){}}
\newenvironment{braid}{
	\begin{tikzpicture}[baseline=6mm,black,line width=1pt, scale=0.32,
		draw/.append style={rounded corners},
		every node/.append style={font=\fontsize{5}{5}\selectfont}]%
	}{\end{tikzpicture}
}
\def\Grid(#1,#2){
	\draw[very thin,gray,step=2mm] (0,0)grid(#1,#2);
	\draw[very thin,darkgreen,step=10mm] (0,0)grid(#1,#2);
}
\newcommand\Tableau[2][\relax]{
	\begin{tikzpicture}[scale=0.5,draw/.append style={thick,black}]
		\ifx\relax#1\relax%
		\else 
		\foreach\box in {#1} { \filldraw[blue!30]\box+(-.5,-.5)rectangle++(.5,.5); }
		\fi
		\newcount\row\newcount\col
		\row=0
		\foreach \Row in {#2} {
			\col=1
			\foreach\k in \Row {
				\draw(\the\col,\the\row)+(-.5,-.5)rectangle++(.5,.5);
				\draw(\the\col,\the\row)node{\k};
				\global\advance\col by 1
			}
			\global\advance\row by -1
		}
	\end{tikzpicture}
}
\newcommand\YoungDiagram[2][\relax]{
	\begin{tikzpicture}[scale=0.5,draw/.append style={thick,black}]
		\ifx\relax#1\relax%
		\else 
		\foreach\box in {#1} {
			\filldraw[blue!30]\box rectangle ++(1,1);
		}
		\fi
		\newcount\row
		\row=0
		\foreach \col in {#2} {
			\draw(1,\the\row)grid ++(\col,1);
			\global\advance\row by -1
		}
	\end{tikzpicture}
}
\begin{document}
	
	\title[Good filtrations]{{\bf Good filtrations for generalized Schur algebras}}
	
	\author{\sc Alexander Kleshchev}
	\address{Department of Mathematics\\ University of Oregon\\
		Eugene\\ OR 97403, USA}
	\email{klesh@uoregon.edu}
	
	\author{\sc Ilan Weinschelbaum}
	\address{Department of Mathematics\\ University of Oregon\\
		Eugene\\ OR 97403, USA}
	\email{ilanw@uoregon.edu}
	
\dedicatory{To the memory of James Humphreys}
	
\subjclass[2021]{16G30, 20C20}
	
\thanks{The first author was supported by the NSF grant DMS-2101791.}

	\begin{abstract}
		Given a quasi-hereditary superalgebra $A$, the first author and R. Muth have defined generalized Schur bi-superalgebras $T^A(n)$ and proved that these algebras are again  quasi-hereditary. In particular, $T^A(n)$ comes with a family of standard modules. Developing the work of Donkin and Mathieu on good filtrations, we prove that tensor product of standard modules over $T^A(n)$ has a standard filtration. 
	\end{abstract}
	
	\maketitle
	
	\section{Introduction}
	Let $S(n,d)$ be the classical Schur algebra as in \cite{Green}. A fundamental fact going back to \cite{GreenComb} is that the algebra $S(n,d)$ is (based) quasi-hereditary, and so by \cite{CPS}, the module category  $\mod{S(n,d)}$ is a highest weight category, cf. \cite[(2.5.3)]{Parshall}, with standard modules $\{\De(\la)\mid\la\in\La_+(n,d)\}$, where $\La_+(n,d)$ is the set of partitions of $d$ with at most $n$ parts. Moreover, there is a coproduct $\coproduct$ on $S(n)=\bigoplus_{d= 0}^{\infty}S(n,d)$, which  makes it into a bialgebra. Upon restriction, 
	$$\coproduct: S(n,d)\to\bigoplus_{d_1+d_2=d}S(n,d_1)\otimes S(n,d_2),$$ 
	so for $\la\in\La_+(n,d_1)$ and $\mu\in\La_+(n,d_2)$, the tensor product $\De(\la)\otimes \De(\mu)$ becomes an $S(n,d_1+d_2)$-module. It follows from (the easy type $A$ case of) the work of Donkin \cite{DonkinFilt} and Mathieu \cite{Math} that $\De(\la)\otimes \De(\mu)$ has a standard filtration, i.e. a filtration whose subquotients are standard modules $\De(\nu)$  with $\nu\in\La_+(n,d_1+d_2)$.

	Let $R$ be a characteristic $0$ 
domain and $\F$ be a field which is an $R$-module. For the purposes of this introduction, we can take $R=\Z$ and $\F$ any field. 
	Let $A$ be a (based) quasi-hereditary superalgebra over $R$, i.e.  $A$ is endowed with a heredity data $I,X,Y$, see \S~\ref{SSBQHA} for details. We make a technical assumption that the data $I,X,Y$ are conforming---this assumption is vacuous in the purely even situation and holds in all the important examples known to us, for example when $A$ is an extended zigzag superalgebra. 
	
	Developing a construction of Turner \cite{T,T2,T3}, the first  author and Muth defined generalized Schur (super)algebras $$T^A(n)=\bigoplus_{d= 0}^{\infty}T^A(n,d),$$ 
see \cite{KMgreen2},  cf. also \cite{EK1}. Importantly, we must first define $R$-forms  
	$T^A(n,d)_R$ of the algebras and then extend scalars to get $T^A(n,d)=\F\otimes_RT^A(n,d)_R$, 
	so it is crucial that $A$ is defined over $R$. When $A$ is the trivial algebra $R$, the generalized Schur algebra returns the classical Schur algebra $S(n,d)$. 
	
As predicted by Turner, certain generalized Schur algebras play an important role in modular representation theory, see \cite{T,EK2}. For example, it is conjectured in \cite[Conjecture 7.61]{KMgreen3} that weight $d$\, RoCK blocks of the classical Schur algebras and $q$-Schur algebras are Morita equivalent to the generalized Schur algebras of the form $T^{\EZig}(n,d)$, where $\EZig$ is an extended zigzag algebra.

	It is proved in \cite{KMgreen3} that under the assumption $d\leq n$ the algebras $T^A(n,d)$ are again based quasi-hereditary, with heredity data $\La^I_+(n,d),\X,\Y$, where $\La^I_+(n,d)$ is the set of $I$-multipartitions of $d$, see \S\ref{SSQHT} for details. In particular, the category of finite dimensional $T^A(n,d)$-modules is a highest weight category with  standard and costandard modules 
	$$\{\De(\bla)\mid\bla\in\La_+^I(n,d)\}
	\quad\text{and}\quad
	\{\nabla(\bla)\mid\bla\in\La_+^I(n,d)\},
	$$ 
	respectively. 
If $d>n$, the `standard' modules $\De(\bla)$ and the `costandard'  modules $\nabla(\bla)$ are still defined and play an important role. For example, if $A$ has a standard anti-involution then $T^A(n,d)$ is cellular with the cell modules $\De(\bla)$, see \cite[Lemma 6.25]{KMgreen3}. 
	
	There is a natural coproduct $\coproduct$ on $T^A(n)$, with  
	$$\coproduct: T^A(n,d)\to\bigoplus_{d_1+d_2=d}T^A(n,d_1)\otimes T^A(n,d_2).$$ 
So for $\bla\in\La^I_+(n,d_1)$ and $\bmu\in\La^I_+(n,d_2)$, the tensor product $\De(\bla)\otimes \De(\bmu)$ becomes a  $T^A(n,d_1+d_2)$-module. The main goal of this note is to prove the following
	
	\vspace{2mm}
	\noindent
	{\bf Main Theorem.}
	{\em 
		The tensor product of two standard (resp. costandard) modules over $T^A(n)$ has a standard (resp. costandard) filtration. 
	}
	
\vspace{2mm}

	We hope that this theorem might be useful to study Ringel and Koszul duality for generalized Schur algebras. In particular, we are interested in 
generalizing the results of Donkin \cite{DonkinTilt,DonkinQS} on Ringel duality for classical (and $q$-) Schur algebras. 
In the follow up paper \cite{KW2}, we will use the Main Theorem to prove that the extended zigzag Schur algebra $T^\EZig(n,d)$ is Ringel self-dual. 
	
	The paper is organized as follows. Section~\ref{SPrelim} is preliminary. In particular, in \S\ref{SSBQHA}, we review the theory of based quasi-hereditary algebras, and in \S\S\ref{SSComb},\ref{SSTab}, we introduce the necessary combinatorial notions needed to handle the formal characters of standard modules over generalized Schur algebras. In Section~\ref{SSA}, we begin to work with generalized Schur algebras. After reviewing the definition and main properties, in \S\ref{SSChF} we prove a character identity which is a combinatorial version of the standard filtration of the tensor product of two standard modules. In Section~\ref{SMain}, we prove the Main Theorem. Our argument, when specialized to the case $A=R$, yields a very easy proof for the classical Schur algebra case which seems to be new (or at least not well-known).

\section{Preliminaries}
	\label{SPrelim}
	\subsection{General notation}
	\label{SGeneralNot}
	For $m,n\in\Z$, we denote $[m,n]:=\{k\in\Z\mid m\leq k\leq n\}$. If 
	$n\in\Z_{>0}$, we also denote $[n]:=\{1,2,\dots,n\}$.
	
	Throughout the paper, $I$ denotes a finite partially ordered set. We always identify $I$ with the set $\{0,1,\dots,l\}$ for $l=|I|-1$, so that the standard total order on integers refines the partial order on $I$.

	For a set $S$, we often write elements of $S^d$ as words $\bs=s_1\cdots s_d$ with $s_1,\dots,s_d\in S$. 
	The symmetric group $\Si_d$ acts on the right on $S^d$ by place permutations:
	$$
	(s_1\cdots s_d)\si=s_{\si 1}\cdots s_{\si d}.
	$$

	An (arbitrary) ground field is denoted by $\F$. 
	Often we will also need to work over a characteristic $0$  domain $R$ such that $\F$ is a $R$-module, so that we can change scalars from $R$ to $\F$ (in all examples of interest to us, one can use $R=\Z$). We use $\k$ to denote $\F$ or $R$ and use it whenever the nature of the ground ring is not important. On the other hand, when it is important to emphasize whether we are working over $R$ or $\F$, we will use lower indices; for example for an $R$-algebra $A_R$ and an $A_R$-module $V_R$, after extending scalars we have that $V_\F:=\F\otimes_RV_R$ is a module over $A_\F:=\F\otimes_RA_R$.

\subsection{Superspaces and supermodules}
	\label{SSSpaces}
	Let $V=\bigoplus_{\eps\in\Z/2}V_\eps$ be a $\k$-supermodule. 
	If $v\in V_\eps\setminus\{0\}$ for $\eps\in\Z/2$, we say $v$ is {\em homogeneous}, write $|v|=\eps$, and refer to $\eps$ as the {\em parity} of $v$. 
	If $S\subseteq V$, we denote $S_\0:=S\cap V_\0$ and $S_\1:=S\cap V_\1$. If $S$ consists of homogeneous elements then $S=S_\0\sqcup S_\1$. 
Let $V$ and $W$ be superspaces. For $\de\in\Z/2$, a parity $\de$ (homogeneous) linear map $f:V\to W$ is a linear map satisfying $f(V_{\eps})\subseteq W_{\eps+\de}$ for all $\eps$.

	Let $d\in\Z_{>0}$. The group $\Si_d$ acts on $V^{\otimes d}$ on the right with   automorphisms, such that for all homogeneous $v_1,\dots,v_d\in V$ and $\si\in \Si_d$, we have 
	\begin{equation}\label{ESiAct}
		(v_1\otimes\dots\otimes v_{d})^\si=
		(-1)^{\lan\si;\bv\ran} v_{\si1}\otimes\dots\otimes v_{\si d},
	\end{equation}
	where, setting $\bv:=v_1\cdots v_d\in V^d$, we have put:
	\begin{equation}\label{EAngleSi}
		\lan\si;\bv\ran:=\sharp\{(k,l)\in[d]^2\mid k<l,\,  \si^{-1}k>\si^{-1}l,\ \text{and}\ v_k,v_l\in V_\1\}.
	\end{equation}
	
	For $0\leq c\leq d$, denote by ${}^{(c,d-c)}\D$ the set of the shortest coset representatives for $(\Si_c\times \Si_{d-c})\backslash\Si_{d}$. Given $w_1\in V^{\otimes c}$ and $w_2\in V^{\otimes (d-c)}$, we define the {\em star product}
	\begin{equation}\label{EStarNotationGen}
		w_1* w_2:=\sum_{\si\in{}^{(c,d-c)}\D}(w_1\otimes w_2)^\si\in V^{\otimes d}.
	\end{equation}

	Let $V$ and $W$ be superspaces, $d\in\Z_{\geq 0}$, and $\bv=v_1\cdots v_d\in V^d$ and $\bw=w_1\cdots w_d\in W^d$ be $d$-tuples of homogeneous elements. We denote 
	\begin{equation} 
		\label{EAngle2}\lan \bv, \bw\ran:=\sharp\{(k,l)\in[d]^2\mid k>l,\  v_k\in V_\1,w_l\in W_\1\}.
	\end{equation}
	
	Let now $A$ be  a unital $\k$-superalgebra. 
	As usual, the tensor product $A^{\otimes d}$ is a superalgebra with respect to 
	$$
	(a_1\otimes\dots\otimes a_d)(b_1\otimes\dots b_d)=(-1)^{\lan\ba,\bb\ran}a_1b_1\otimes\dots\otimes a_db_d,
	$$
	where we have put $\ba:=a_1\cdots a_d$, $\bb:= b_1\cdots b_d$ (here and below, in expressions like this, we assume that all elements are homogeneous).
	
	For any superspace $V$, we consider the subspace of invariants 
	\begin{equation}\label{EGa}
		\Ga^dV:=\big(V^{\otimes d}\big)^{\Si_d}=\{w\in V^{\otimes d}\mid w^\si=w\ \text{for all $\si\in\Si_d$}\}.
	\end{equation}
	If $A$ is a superalgebra, then $\Ga^dA$ is a subsuperalgebra of $A^{\otimes d}$. 
	
	Let $A$ be  a unital $\k$-superalgebra and $V,W$ be $A$-supermodules. A homogeneous {\em $A$-supermodule homomorphism} $f:V\to W$ is a homogeneous linear map $f:V\to W$ satisfying $f(av)=(-1)^{|f||a|}af(v)$ for all (homogeneous) $a,v$. Let 
$$\Hom_A(V,W)=\Hom_A(V,W)_\0\oplus \Hom_A(V,W)_\1$$ be the superspace of all $A$-supermodule homomorphisms from $V$ to $W$. 
We denote by $\mod{A}$ the category of all finitely generated (left) $A$-supermodules and all $A$-supermodule homomorphisms. We denote by `$\cong$' an isomorphism in this category and by `$\simeq$' an {\em even isomorphism} in this category.

We have the parity change functor $\Pi$ on $\mod{A}$: 
for $V\in\mod{A}$ we have $\Pi V\in\mod{A}$ with $(\Pi V)_\eps=V_{\eps+\1}$ for all $\eps\in \Z/2$ and the new action 
$a\cdot v=(-1)^{|a|}av$ for $a\in A, v\in V$. We have $V\cong \Pi V$ via the identity map.

	
	All subspaces, ideals, submodules, etc. are assumed to be homogeneous. 
	For example, given homogeneous elements $v_1,\dots,v_k$ of an $A$-supermodule $V$, we have the $A$-submodule  $A\lan v_1,\dots,v_k\ran\subseteq V$ generated by $v_1,\dots,v_k$.

\subsection{Based quasi-hereditary algebras}
	\label{SSBQHA}
	The main reference here is \cite{KMgreen1}. 
	Let $A$ be a $\k$-superalgebra.

	\begin{Definition} \label{DCC} {\rm \cite{KMgreen1}} 
		{\rm 
			Let $I$ be a finite partially ordered set and let $X=\bigsqcup_{i\in I}X(i)$ and $Y=\bigsqcup_{i\in I}Y(i)$ be finite sets of homogeneous elements of $A$ with distinguished {\em initial elements} $e_i\in X(i)\cap  Y(i)$ for each $i\in I$.
			For each $i\in I$, we set 
			$
			A^{>i}:=\spa\{xy\mid j>i,\, x\in X(j),\, y\in Y(j)\}$. 
			We say that $I,X,Y$ is  {\em heredity data} if the following axioms hold: 
			\begin{enumerate}
				\item[{\rm (a)}] $B:=\{x y \mid i\in I,\, x\in X(i),\, y\in Y(i)\}$ is a basis of $A$; 
				
				\item[{\rm (b)}] For all $i\in I$, $x\in X(i)$, $y\in Y(i)$ and $a\in A$, we have
				$$
				a x \equiv \sum_{x'\in X(i)}l^x_{x'}(a)x' \pmod{A^{>i}}
				\ \ \text{and}\ \ 
				ya \equiv \sum_{y'\in Y(i)}r^y_{y'}(a)y' \pmod{A^{>i}}
				$$
				for some $l^x_{x'}(a),r^y_{y'}(a)\in\k$;

				\item[{\rm (c)}] For all $i,j\in I$ and $x\in X(i),\ y\in Y(i)$ we have  
				\begin{align*}
					&xe_i= x,\ e_ix= \de_{x,e_i}x,\ e_i y= y,\ ye_i= \de_{y,e_i}y 
					\\
					&e_jx=x\ \text{or}\ 0,\ ye_j=y\ \text{or}\ 0. 
				\end{align*}
			\end{enumerate}
			
			If $A$ is endowed with heredity data $I,X,Y$, we call $A$ {\em based quasi-hereditary}, and refer to $B$ as a {\em heredity basis} of $A$. 
		}
	\end{Definition}

	From now on, $A$ is a  based quasi-hereditary superalgebra with heredity data $I,X,Y$. We refer to the idempotents $e_i$ as the {\em standard idempotents} of the heredity data. 
	We have $B=B_\0\sqcup B_\1$ and 
	\begin{equation}\label{EBABC}
		B_\0=B_\a\sqcup B_\c,
	\end{equation} 
	where 
	$$B_\a:=\{xy\mid i\in I, x\in X(i)_\0,y\in Y(i)_\0\},\quad B_\c:=\{xy\mid i\in I, x\in X(i)_\1,y\in Y(i)_\1\}.
	$$
	The heredity data $I,X,Y$ of $A$ is called {\em conforming} if $B_\a$ spans a unital subalgebra of $A$. 
	
	\begin{Lemma} \label{idemactionNew} {\rm \cite[Lemmas 2.7, 2.8]{KMgreen1}}
		Let $i,j\in I$ and \(x \in X(i)\), \(y \in Y(i)\). 
		\begin{enumerate}
			\item[{\rm (i)}] $e_ie_j=\de_{i,j}e_i$
			\item[{\rm (ii)}] If \(j \not \leq i\), then \(e_j x = y e_j = 0\).
		\end{enumerate}
	\end{Lemma}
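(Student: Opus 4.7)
The plan is to derive (i) directly from axiom (c) read in two complementary ways, and to derive (ii) from the combination of axiom (c) with the right-hand formula of axiom (b), using the disjointness of the heredity basis $B=\bigsqcup_k B_k$, where $B_k:=\{xy\mid x\in X(k),\,y\in Y(k)\}$.

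For (i), if $i=j$ the formula $e_ix=\delta_{x,e_i}x$ from axiom (c), applied to $x=e_i\in X(i)$, yields $e_i^2=e_i$. For $i\ne j$, I would read axiom (c) twice: viewing $e_j\in X(j)$ and applying the rule ``$e_{j'}x=x$ or $0$'' with $j'=i$ gives $e_ie_j\in\{0,e_j\}$; viewing $e_i\in Y(i)$ and applying ``$ye_{j'}=y$ or $0$'' with $j'=j$ gives $e_ie_j\in\{0,e_i\}$. Since $e_i=e_i\cdot e_i\in B_i$ and $e_j=e_j\cdot e_j\in B_j$ are distinct basis elements of $A$, the two two-element sets intersect only in $\{0\}$, so $e_ie_j=0$.

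For (ii), suppose $j\not\le i$ and assume for contradiction that $e_jx\ne 0$. Axiom (c) then forces $e_jx=x$. Applying the right-hand formula of axiom (b) to $y=e_j\in Y(j)$ and $a=x$ gives
$$
x\;=\;e_jx\;\equiv\;\sum_{y'\in Y(j)} r^{e_j}_{y'}(x)\,y'\pmod{A^{>j}}.
$$
Every $y'\in Y(j)$ satisfies $y'=e_jy'\in B_j$ by axiom (c), so the right-hand side lies in $\spa(B_j)+A^{>j}=\spa\bigl(\bigsqcup_{k\ge j}B_k\bigr)$. But $x=xe_i$ is a single basis element in $B_i$, and the pieces $B_k$ of $B$ are disjoint, so $x$ can lie in this span only when $i\ge j$, i.e.\ $j\le i$---contradicting our assumption. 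Hence $e_jx=0$. The identity $ye_j=0$ follows by the mirror argument, applying the left-hand formula of axiom (b) to $x=e_j\in X(j)$ and $a=y$.

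The only real subtlety here is spotting the two-sided exploitation of axioms (b) and (c): for (i) one must bracket $e_ie_j$ both from the left as an instance of ``$e_{j'}x$'' and from the right as an instance of ``$ye_{j'}$''; for (ii) the right-hand part of axiom (b) applied with $y=e_j$ is exactly the tool that traps the product $e_jx$ in the span of basis elements of weight $\ge j$, which is incompatible with $x\in B_i$ when $j\not\le i$. Everything else is direct bookkeeping against the heredity basis.
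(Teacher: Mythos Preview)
Your proof is correct. The paper itself does not prove this lemma but simply cites \cite[Lemmas 2.7, 2.8]{KMgreen1}; your argument recovers essentially the same derivation from the axioms that one finds there, exploiting axiom (c) symmetrically for part (i), and for part (ii) reading $e_j$ as an element of $Y(j)$ so that the right-hand congruence in axiom (b) traps $e_jx$ in $\spa\bigl(\bigsqcup_{k\ge j}B_k\bigr)$, which conflicts with $x=xe_i\in B_i$ unless $j\le i$.
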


	\begin{Corollary} 
		We have $X\cap Y=\{e_i\mid i\in I\}$.
	\end{Corollary}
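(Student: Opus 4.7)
The plan is straightforward. Take any $z \in X \cap Y$, say $z \in X(i) \cap Y(j)$, and use the axioms in Definition \ref{DCC}(c) together with Lemma \ref{idemactionNew} to force $i = j$ and then $z = e_i$. The reverse inclusion $\{e_i \mid i \in I\} \subseteq X \cap Y$ is immediate from the hypothesis $e_i \in X(i) \cap Y(i)$.

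First I would pin down $i = j$ by playing the left and right idempotent actions against each other. Since $z \in X(i)$, axiom (c) gives $z e_i = z$, and since $z \in Y(j)$, axiom (c) gives $e_j z = z$. As $z$ is a heredity basis element, it is nonzero. Now I would apply Lemma \ref{idemactionNew}(ii) to $z \in X(i)$: if $j \not\leq i$ then $e_j z = 0$, contradicting $e_j z = z \neq 0$; hence $j \leq i$. Symmetrically, applying the lemma to $z \in Y(j)$: if $i \not\leq j$ then $z e_i = 0$, again a contradiction; hence $i \leq j$. Together these give $i = j$.

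Now with $i = j$, the two halves of axiom (c) specialize to $e_i z = \delta_{z, e_i} z$ (from $z \in X(i)$) and $e_i z = z$ (from $z \in Y(i)$). Comparing these and using $z \neq 0$ forces $\delta_{z, e_i} = 1$, i.e.\ $z = e_i$, completing the proof. I do not expect any obstacle here; the corollary is a direct consequence of Lemma \ref{idemactionNew} plus the idempotent axioms in Definition \ref{DCC}(c), and the whole argument fits in a handful of lines.
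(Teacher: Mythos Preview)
Your proposal is correct and follows essentially the same route as the paper's proof: use $ze_i=z$ and $e_jz=z$ together with Lemma~\ref{idemactionNew}(ii) (applied once to $z\in X(i)$ and once to $z\in Y(j)$) to force $i=j$, then invoke the idempotent clauses of Definition~\ref{DCC}(c) to conclude $z=e_i$. The paper's version is simply a terser write-up of exactly this argument.
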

	\begin{proof}
		Let $z \in X \cap Y$. As $z \in X$ we have $z\in X(i)$ so $ze_i = z$ for some $i\in I$. As $z \in Y$, we have $z\in Y(j)$ so $e_jz = z$ for some $j\in I$. By Lemma~\ref{idemactionNew}(ii), $j = i$, and the result follows from Definition~\ref{DCC}(c).
	\end{proof}
	
	
	\begin{Definition}
		Let $0\neq V\in\mod{A}$ and $i\in I$. We call $V$ a {\em highest weight module (of weight $i$)} if there exists a homogeneous $v\in V$ such that $e_iV$ is spanned by $v$, $Av=V$,  and $j> i$ implies $e_j V = 0$. In this case we refer to $v$ as a {\em highest weight vector} of $V$. 
	\end{Definition}
	
	\begin{Lemma} \label{LYCrit} 
		Let $i\in I$, $0\neq V\in\mod{A}$ and $v\in V$ be  a homogeneous vector. Suppose that $e_iv=v$, $Av=V$, and $yv=0$ for all $y\in Y\setminus\{e_i\}$. Then  $V$ is a highest weight module of weight $i$. 
	\end{Lemma}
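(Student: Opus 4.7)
The plan is to exploit the heredity basis $B=\{xy\mid i\in I, x\in X(i), y\in Y(i)\}$ from Definition~\ref{DCC}(a) to cut down $V=Av$ to a very explicit spanning set, and then let Definition~\ref{DCC}(c) and Lemma~\ref{idemactionNew}(ii) handle the two conditions defining a highest weight module.

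First I would show that $V$ is spanned by $\{xv\mid x\in X(i)\}$. Since $B$ is a basis of $A$ and $V=Av$, every element of $V$ is a $\k$-linear combination of vectors of the form $xyv$ with $j\in I$, $x\in X(j)$ and $y\in Y(j)$. The hypothesis $yv=0$ for all $y\in Y\setminus\{e_i\}$ together with $e_iv=v$ kills every such $xyv$ except when $y=e_i$, and $e_i\in Y(i)$ forces $j=i$ and $x\in X(i)$; in that case $xyv=xe_iv=xv$. This produces the claimed spanning set.

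Next I would verify the two defining conditions. For the weight condition $e_iV=\k v$: acting by $e_i$ on a spanning vector $xv$ with $x\in X(i)$, Definition~\ref{DCC}(c) gives $e_ix=\de_{x,e_i}x$, so $e_i\cdot xv=\de_{x,e_i}v$; hence $e_iV\subseteq\k v$, and the reverse containment is immediate from $v=e_iv$. For the vanishing $e_jV=0$ whenever $j>i$: here $j\not\leq i$ by antisymmetry, so Lemma~\ref{idemactionNew}(ii) yields $e_jx=0$ for all $x\in X(i)$, and therefore $e_j\cdot xv=0$ on the whole spanning set.

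There is essentially no obstacle here: the argument is a direct unpacking of the axioms once one notices that the hypothesis $yv=0$ for $y\neq e_i$ collapses the action of the heredity basis on $v$ to the single "column" indexed by $y=e_i$. The only small point to double-check is that $e_i$ itself lies in $Y(i)$ and not in $Y\setminus\{e_i\}$, so the hypothesis does not accidentally imply $v=e_iv=0$.
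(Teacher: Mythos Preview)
Your proposal is correct and follows essentially the same approach as the paper's proof: the paper also first observes that $V$ is spanned by $\{xv\mid x\in X(i)\}$ (using the heredity basis and the hypothesis $yv=0$ for $y\in Y\setminus\{e_i\}$), and then appeals to Definition~\ref{DCC}(c) and Lemma~\ref{idemactionNew}(ii) for the remaining conditions. Your write-up simply makes these steps explicit.
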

	\begin{proof}
		Since $A$ is based quasi-hereditary, it follows from the assumption $yv=0$ for all $y\in Y\setminus\{e_i\}$ that $V$ is spanned by $\{xv\mid x\in X(i)\}$. The result now follows from Definition~\ref{DCC}(c) and Lemma~\ref{idemactionNew}(ii). 
	\end{proof}

	Fix $i\in I$. Note that $A^{>i}$ is an ideal in $A$ and denote 
	$\tilde A:=A/A^{>i}$, $\tilde a:=a+A^{>i}\in \tilde A$ for $a\in A$.  
	By inflation, $\tilde A$-(super)modules will be automatically considered as $A$-(super)modules. In particular, the {\em standard module} 
$$\De(i):=\tilde A \tilde e_i$$ 
is considered as an $A$-module. We have that $\De(i)$ is a free $\k$-module with basis $\{v_x:=\tilde x\mid x\in X(i)\}$ and the action $av_x=\sum_{x'\in X(i)}l_{x'}^x(a)v_{x'},
$ cf. \cite[\S2.3]{KMgreen1}. Denoting 
$$v_i:=v_{e_i}\in\De(i),$$ 
we have $e_iv_i=v_i$, and $e_j\De(i)\neq 0$ implies $j\leq i$  thanks to Lemma~\ref{idemactionNew}. Moreover, for all for all $x\in X(i)$ we have  $xv_i=v_x$, $e_i v_x=\de_{x,e_i}v_x$. In particular, $\De(i)$ is a highest weight module of weight $i$ (with even highest weight vector). If $V\in \mod{A}$ is isomorphic to $\De(i)$, then it is easy to see, using the fact that $e_iV$ is free of rank $1$ as a $\k$-module, that either $V\simeq \De(i)$ or $V\simeq \Pi\De(i)$. 

We also have the right standard $A$-module 
$$\De^\op(i):= \tilde e_i \tilde A,$$ 
and by symmetry every result we have about $\De(i)$ has its right analogue for $\De^\op(i)$, for example $\De^\op(i)$ is a free $\k$-module with basis $\{w_y:=\tilde y\mid y\in Y(i)\}$. 
	
	Let $V\in\mod{A}$. A {\em standard filtration} 
	of $V$ is an $A$-supermodule filtration 
	$0=W_0\subseteq W_1\subseteq \dots\subseteq W_l=V$ such that for every  $r=1,\dots,l$, we have $W_r/W_{r-1}\cong \De(i_r)$ for some $i_r\in I$. 
	We refer to $\De(i_1),\dots, \De(i_l)$ as the factors of the filtration, and to $\De(i_1)$ (resp. $\De(i_l)$) as the bottom (resp. top) factor. 
	
	Suppose now until the end of the subsection that $\k=\F$. 
	Then each $L(i):=\head \De(i)$ is irreducible, and $\{L(i)\mid i\in I\}$ is a complete set of non-isomorphic irreducible $A$-supermodules. We also have that $L^\op(i):=\head \De^\op(i)$ is an irreducible right module, and $\{L^\op(i)\mid i\in I\}$ is a complete set of non-isomorphic irreducible right $A$-supermodules.

	By \cite[Lemma 3.3]{KMgreen1}, $A$ is quasi-hereditary in the sense of Cline, Parshall and Scott, and $\mod{A}$ is a highest weight category with standard modules $\{\De(i)\mid i\in I\}$, see \cite[Theorem 3.6]{CPS}. In particular, the projective cover $P(i)$ of $L(i)$ has a standard filtration with the top factor $\De(i)$ and all other factors of the form $\De(j)$ or $\Pi\De(j)$ for $j>i$. Moreover, $\De(i)$ is the largest quotient of $P(i)$ such that $[\De(i):L(i)]=1$ and $[\De(i):L(j)]\neq 0$ implies $j\leq i$.

	\begin{Proposition} \label{PUn} 
		{\bf (Universality of standard modules)}
		Let $\k=\F$, $i\in I$, and $V$ be a highest weight module of weight $i$ with highest weight vector $v$. Then there is an homogeneous surjection $\De(i)\onto V$ of parity $|v|$; in particular $e_j V \neq 0$ implies $j\leq i$. 
	\end{Proposition}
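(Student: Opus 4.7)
The plan is to construct the surjection explicitly, sending the canonical generator $v_i=\tilde e_i$ of $\Delta(i)$ to $v$ with overall parity shift $|v|$. First I would note that since $e_iV$ is spanned by $v$, we have $v\in e_iV$, so $v=e_iw$ for some $w\in V$, and hence $e_iv=e_i^2w=e_iw=v$. Left multiplication by $v$ then defines an $A$-supermodule homomorphism $\phi:A\to V$, $a\mapsto(-1)^{|v||a|}av$, of parity $|v|$.

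The main step is to verify $\phi(A^{>i})=0$, so that $\phi$ descends to $\tilde A=A/A^{>i}$. By definition, $A^{>i}$ is spanned by products $xy$ with $j>i$, $x\in X(j)$, $y\in Y(j)$. For any such $y$, Definition~\ref{DCC}(c) gives $ye_j=y$, whence $yv=ye_jv$; but $e_jv\in e_jV=0$ since $V$ is highest weight of weight $i$ and $j>i$. Thus $A^{>i}v=0$, and $\phi$ factors through a well-defined $\bar\phi:\tilde A\to V$. I would then restrict $\bar\phi$ to the left ideal $\Delta(i)=\tilde A\tilde e_i\subseteq\tilde A$, using $e_iv=v$ once more, to obtain a homogeneous $A$-supermodule map $\Delta(i)\to V$ of parity $|v|$ whose image contains $v$ and is an $A$-submodule, and so equals $Av=V$.

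The final assertion that $e_jV\ne 0$ implies $j\leq i$ is then immediate: since $V$ is a quotient of $\Delta(i)$, the analogous property of $\Delta(i)$---recorded in the discussion preceding the proposition as a consequence of Lemma~\ref{idemactionNew}(ii)---transfers to $V$. There is no substantial obstacle here; this is a standard universality argument, and the only subtleties are parity bookkeeping and the factorization through $\tilde A$.
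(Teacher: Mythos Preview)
Your overall strategy is correct and more direct than the paper's, but one step is misstated. You claim that Definition~\ref{DCC}(c) gives $ye_j=y$ for $y\in Y(j)$; in fact that axiom gives $ye_j=\de_{y,e_j}y$, which vanishes unless $y=e_j$. The identity you want is on the other side: $e_jy=y$ for $y\in Y(j)$. With this correction the argument goes through immediately: $yv=e_jyv=e_j(yv)\in e_jV=0$, so $yv=0$, hence $A^{>i}v=0$ and $\phi$ descends to $\tilde A$ as you intended. (Equivalently, use $xe_j=x$ for $x\in X(j)$ to write $xyv=x\,e_j(yv)=0$.)

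Once this slip is fixed, your route is genuinely different from the paper's. The paper maps $Ae_i\onto V$, decomposes $Ae_i\simeq P(i)\oplus P$ with $P$ a direct sum of projectives $P(j)$ for $j>i$, observes $\Hom_A(P(j),V)=0$ via $\Hom_A(\De(j),V)=0$, and then uses the standard filtration of $P(i)$ to factor through $\De(i)$. Your argument bypasses projective covers and standard filtrations entirely, deducing $A^{>i}v=0$ straight from the heredity axioms; this is shorter and uses less machinery. The paper's approach, on the other hand, makes the role of the highest weight category structure more visible and would adapt to settings where an explicit heredity basis is not given.
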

	\begin{proof}
		Let $e_iV$ be spanned by $v\in V$. There is a homogeneous surjective $A$-supermodule homomorphism $\phi:Ae_i\onto V,\ ae_i\mapsto av$ of parity $|\phi|=|v|$. 
		As $e_iL(i)$ is $1$-dimensional and $e_iL(j)\neq 0$ implies $i\leq j$, we have that $Ae_i=P(i)\oplus P$, where $P$ is a direct sum of supermodules isomorphic to $P(j)$ with $j>i$. 
		
		Note that $\Hom_A(\De(j),V)=0$ for any $j>i$, so $\Hom_A(P(j),V)=0$ for all $j>i$, and we deduce $\Hom_A(P,V)=0$. So the map $\phi$ factors to give a surjection $P(i)\onto V$. Moreover, $P(i)$ has a standard filtration with top factor $\De(i)$ and other factors isomorphic to $\De(j)$ with $j>i$, so the map further factors through the surjection $\De(i)\onto V$.
	\end{proof}
	
	The following is a useful criterion for $V$ to have a standard filtration.

	\begin{Corollary}\label{FiltCriteria}
		Let $\k=\F$, $V\in\mod{A}$, $v_1\dots,v_t\in V$ be homogeneous elements, and set $V_s:=A\lan v_1,\dots,v_s\ran$ for $s=1,\dots, t$. Suppose that the following conditions hold:
		\begin{enumerate}
			\item[{\rm (1)}] $V_t=V$;
			\item[{\rm (2)}] for each $s=1,\dots,t$ there exists $i_s\in I$ such that $e_{i_s}v_s- v_s\in V_{s-1}$ and $yv_s\in V_{s-1}$ for all $y\in Y\setminus\{e_{i_s}\}$;
			\item[{\rm (3)}] $\dim V=\sum_{s=1}^t\dim\De(i_s)$.
		\end{enumerate}
		Then $V_s/V_{s-1}\simeq \Pi^{|v_s|}\De(i_s)$ for all $s=1,\dots,t$. 
		In particular, $V$ has a standard filtration. 
	\end{Corollary}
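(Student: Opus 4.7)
The plan is to verify, for each $s$, that the quotient $V_s/V_{s-1}$ is a highest weight module of weight $i_s$ with highest weight vector $\bar v_s:=v_s+V_{s-1}$ of parity $|v_s|$, then apply the universality result Proposition~\ref{PUn}, and finally use the dimension hypothesis (3) to force each surjection produced there to be an even isomorphism.

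First I would set $\bar v_s:=v_s+V_{s-1}\in V_s/V_{s-1}$ for each $s$. By definition $V_s=A\langle v_1,\dots,v_s\rangle$, so the images of $v_1,\dots,v_s$ generate $V_s$ as an $A$-module, which means $A\bar v_s=V_s/V_{s-1}$ since $v_1,\dots,v_{s-1}\in V_{s-1}$. Condition (2) then gives $e_{i_s}\bar v_s=\bar v_s$ and $y\bar v_s=0$ for every $y\in Y\setminus\{e_{i_s}\}$. Provided $\bar v_s\neq 0$, Lemma~\ref{LYCrit} applies and shows that $V_s/V_{s-1}$ is a highest weight module of weight $i_s$ with (homogeneous) highest weight vector $\bar v_s$ of parity $|v_s|$. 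Applying Proposition~\ref{PUn} produces a homogeneous surjection $\Delta(i_s)\twoheadrightarrow V_s/V_{s-1}$ of parity $|v_s|$, equivalently an even surjection $\Pi^{|v_s|}\Delta(i_s)\twoheadrightarrow V_s/V_{s-1}$, which in particular yields the inequality $\dim(V_s/V_{s-1})\leq \dim\Delta(i_s)$.

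Next I would combine these bounds with condition (3):
\[
\dim V \;=\; \sum_{s=1}^t \dim(V_s/V_{s-1}) \;\leq\; \sum_{s=1}^t \dim\Delta(i_s) \;=\; \dim V.
\]
Hence every inequality is an equality, each surjection $\Pi^{|v_s|}\Delta(i_s)\twoheadrightarrow V_s/V_{s-1}$ is an even isomorphism, and in particular $\bar v_s\neq 0$ for every $s$ (so no vacuous case needs to be excluded a posteriori: if some $\bar v_s$ were zero one would have $\dim(V_s/V_{s-1})=0<\dim\Delta(i_s)$, contradicting the equality forced by (3)). The chain $0=V_0\subseteq V_1\subseteq\cdots\subseteq V_t=V$ is then a standard filtration of $V$ up to parity shifts, which still counts as a standard filtration in the sense of \S\ref{SSBQHA} since $\Delta(i_s)\cong\Pi\Delta(i_s)$ in $\mod{A}$.

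The only mildly delicate point is the bookkeeping with the parity shift $\Pi^{|v_s|}$ and ensuring that the surjections of Proposition~\ref{PUn} can be chained with the dimension count; once one works with even isomorphisms $\simeq$ this is routine. I do not anticipate any serious obstacle: the argument is a standard "generation plus dimension count forces all surjections to be isomorphisms" trick, with the heredity machinery (specifically Lemma~\ref{LYCrit} and Proposition~\ref{PUn}) doing the structural work.
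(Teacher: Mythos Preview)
Your proposal is correct and follows essentially the same route as the paper's proof: apply Lemma~\ref{LYCrit} and Proposition~\ref{PUn} to see that each $V_s/V_{s-1}$ is a quotient of $\Pi^{|v_s|}\Delta(i_s)$, then use the dimension hypothesis (3) to force equality. Your version is simply a fleshed-out rendering of the paper's two-line argument, including the careful handling of the edge case $\bar v_s=0$ that the paper leaves implicit.
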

	\begin{proof}
		By Lemma~\ref{LYCrit} and Proposition~\ref{PUn}, each $V_s/V_{s-1}$ is a quotient of $\Pi^{|v_s|}\De(i_s)$. The result follows by dimensions. 
	\end{proof}
	
	The highest weight category $\mod{A}$ comes with costandard modules $\{\nabla(i)\mid i\in I\}$. Let $J(i)$ be the injective hull of $L(i)$ in $\mod{A}$ for $i\in I$. One can define $\nabla(i)$ as the largest submodule of $J(i)$ such that $[\nabla(i):L(i)]=1$ and $[\nabla(i):L(j)]>0$ implies $j\leq i$. Let $V\in\mod{A}$. 
	A {\em costandard filtration} 
	of $V$ is an $A$-supermodule filtration 
	$0=W_0\subseteq W_1\subseteq \dots\subseteq W_l=V$ such that for every  $r=1,\dots,l$, we have $W_r/W_{r-1}\cong \nabla(i_r)$ 
	for some $i_r\in I$. 
	
	Given a right $A$-supermodule $V$, there is a (left) $A$-supermodule structure on $V^*$ with $af(v)=(-1)^{|a||f|+|a||v|}f(va)$ for $a\in A,f\in V^*,v\in V$. 
	For example, note that $L^\op(i)^*$ is irreducible, $e_iL^\op(i)^*\neq 0$, and $e_jL^\op(i)^*\neq 0$ implies $j\leq i$;  therefore $L^\op(i)^*\simeq L(i)$. Denoting by $P^\op(i)$ the projective cover of $L^\op(i)$, we deduce that $P^\op(i)^*\simeq J(i)$. This in turn implies easily:
	\begin{equation}\label{ENabla}
		\nabla(i)\simeq\De^\op(i)^*.
	\end{equation}
	
	If  $0=W_0\subseteq W_1\subseteq \dots\subseteq W_l=V$ 
	is a standard filtration of $V$ and $i\in I$ then $\sharp\{1\leq r\leq l\mid W_r/W_{r-1}\cong \De(i)\}$ does not depend on the choice of the standard filtration and is denoted $(V:\De(i))$. In fact, by \cite[Proposition A2.2]{DonkinQS}, we have
	\begin{equation}\label{EDeMult}
		(V:\De(i))=\dim\Hom_A(V,\nabla(i)).
	\end{equation}

\subsection{Partitions and compositions}\label{SSComb}
	We denote by $\La_+$ the set of all partitions. For $\la\in\La_+$,   we have the conjugate partition $\la'$, see \cite[p.2]{Mac}. The Young diagram of $\la$ is 
$$[\la]:=\{(r,s)\in \Z_{>0}\times\Z_{>0} \mid s\leq \la_r\}.$$ 
We refer to $(r,s)\in[\la]$ as the {\em nodes} of $\la$. 
Define the partial order $\preceq$ on the nodes as follows: $(r,s)\preceq(r',s')$ if and only if $r\leq r'$ and $s\leq s'$. 

	For $\la,\mu,\nu\in\La_+$, we denote by $c^{\,\la}_{\mu,\nu}$ the corresponding Littlewood-Richardson coefficient, see \cite[\S\,I.9]{Mac}. 
	
	Let $n\in \Z_{>0}$. We denote $\La(n)=\Z_{\geq 0}^n$ and interpret it as the set of {\em compositions}\, $\la=(\la_1,\dots,\la_n)$ with $n$ non-negative parts. For $\la,\mu\in\La(n)$, we define 
$$\la+\mu:=(\la_1+\mu_1,\dots,\la_n+\mu_n).$$ 
For $1\leq r\leq n$, we denote 
\begin{equation}\label{EEps}
\eps_r:=(0,\dots,0,1,0,\dots,0)\in\La(n)
\end{equation}
with $1$ in position $r$. 
For $\la=(\la_1,\dots,\la_n)\in\La(n)$, set $|\la|:=\la_1+\dots+\la_n$. 

Denote 
$$\La_+(n):=\{\la=(\la_1,\dots,\la_n)\in\La(n)\mid\la_1\geq\dots\geq\la_n\}.$$ 
Sometimes we collect equal parts of $\la\in \La_+(n)$ to write it as $\la=(l_1^{a_1},\dots,l_k^{a_k})$ for $l_1>\dots>l_k\geq 0$ and $a_1,\dots,a_k>0$ with $a_1+\dots+a_k=n$. 
We interpret $\La_+(n)$ as a subset of $\La_+$ in the obvious way. 
For $d\in\Z_{\geq 0}$, let 
\begin{align*}
\La(n,d)=\{\la\in\La(n)\mid|\la|=d\}  
\quad \text{and}\quad
\La_+(n,d)=\{\la\in\La_+(n)\mid|\la|=d\}.
\end{align*}
	
	Let $S$ be a finite set. 
	We will consider the set of {\em $S$-multicompositions} and {\em $S$-multipartitions} 
	\begin{eqnarray*}\label{ELaX}
\La^S(n)&:=&\La(n)^S=\{\bla=(\la^{(s)})_{s\in S}\mid \la^{(s)}\in\La(n)\ \text{for all $s\in S$}\},
\\
\La_+^S(n)&:=&\La_+(n)^S=\{\bla=(\la^{(s)})_{s\in S}\mid \la^{(s)}\in\La_+(n)\ \text{for all $s\in S$}\}.
\end{eqnarray*} 
For \(\bla,\bmu\in\La^S(n)\) we define \(\bla+\bmu\) to be $\bnu\in\La^S(n)$ with \(\nu^{(s)} = \la^{(s)} + \mu^{(s)}\) for all \(s \in S\). 
For $\bla\in\La^S(n)$, we define its Young diagram to be $[\bla]:=\bigsqcup_{s\in S}[\la^{(s)}]$. We also set 
$$\|\bla\|:=(|\la^{(s)}|)_{s\in S}\in\Z_{\geq 0}^S.$$ 
For $d\in\Z_{\geq 0}$, we set 
\begin{align*}
\La^S(n,d)&:=\{\bla\in\La^S(n)\mid {\textstyle\sum_{s\in S}}|\la^{(s)}|=d\},
\\
\La_+^S(n,d)&:=\{\bla\in\La^S_+(n)\mid {\textstyle\sum_{s\in S}}|\la^{(s)}|=d\}.
\end{align*}  

In the special case $S=I=\{0,\dots,l\}$, we also write  $\bla=(\la^{(0)},\dots,\la^{(l)})$ instead of $\bla=(\la^{(i)})_{i\in I}\in\La^I(n)$. 
	For $i\in I$, and $\la\in\La(n,d)$, define 
	\begin{equation}\label{EIota}
		\biota_i(\la):=(0,\dots,0,\la,0,\dots,0)\in\La^I(n,d),
	\end{equation}
	with $\la$ in the $i$th position. 

	Let $\leq$ be a partial order on $S$. We have a partial order $\unlhd_S$ on the set $\Z_{\geq 0}^S$ with $(a_s)_{s\in S}\unlhd_S(b_s)_{s\in S}$ if and only if $\sum_{t\geq s}a_t\leq \sum_{t\geq s}b_t$ for all $s\in S$. Let $\unlhd$ be the usual {\em dominance partial order} on $\La(n,d)$, i.e. 
	$
	\la\unlhd \mu$ if and only if $\sum_{r=1}^s\la_r\leq \sum_{r=1}^s\mu_r$ for all $s=1,\dots,n$. 
	We have a partial order $\leq_S$ on $\La^S(n,d)$ defined as follows:
	$\bla\leq_S \bmu$ if and only if either $\|\bla\|\lhd_S\|\bmu\|$, 
	or $\|\bla\|=\|\bmu\|$ and  $\la^{(s)}\unlhd \mu^{(s)}$ for all $s\in S$.

\subsection{Tableaux} 
	\label{SSTab}
	Let $I,X,Y$ be heredity data on a $\k$-superalgebra $A$ as in \S\ref{SSBQHA}. We introduce {\em colored alphabets}   
	$
	\Alph_{X}:=[n]\times X \quad\text{and}\quad  \Alph_{X(i)}:=[n]\times X(i).
	$
	An element $(l,x)\in \Alph_{X}$ is often written as $l^x$. 
	If  $L=l^x\in \Alph_{X}$, we denote 
	$
	\letter(L):=l$ and $\col(L):=x.$
	For all $i\in I$, we fix arbitrary total orders `$<$' on the sets $\Alph_{X(i)}$ which satisfy $r^x< s^x$ if  $r< s$ (in the standard order on $[n]$). 
	All definitions of this subsection which involve $X$ have obvious analogues for $Y$, for example, we have the colored alphabets $\Alph_{Y}$ and $\Alph_{Y(i)}$.

	Let $\bla=(\la^{(0)},\dots,\la^{(l)})\in\La^I(n,d)$. Fix $i\in I$. A  {\em standard $X(i)$-colored $\la^{(i)}$-tableau} is a function
	$T:\Brackets{\la^{(i)}}\to \Alph_{X(i)}$ such that the following two conditions are satisfied:
\begin{enumerate}
\item[{\sf (R)}] $T(M) \leq T(N)$ whenever $M\prec N$ are nodes in the same row of $\Brackets{\la^{(i)}}$, and the equality is allowed only if $\col(T(M))\in X(i)_\0$.
\item[{\sf (C)}] $T(M) \leq T(N)$ whenever $M\prec N$ are nodes in the same column of $\Brackets{\la^{(i)}}$, and the equality is allowed only if $\col(T(M))\in X(i)_\1$. 
\end{enumerate}
We denote by 
$\Std^{X(i)}(\la^{(i)})$ 
the set of all standard 
$X(i)$-colored $\la^{(i)}$-tableaux. 
Recalling the idempotents $e_i\in X(i)\cap Y(i)$, the {\em initial $\la^{(i)}$-tableau} $T^{\la^{(i)}}$ is
	$
	T^{\la^{(i)}}: \Brackets{\la^{(i)}}\to \Alph_{X(i)}, \ (r,s)\mapsto r^{e_i}.
	$
	Note that $T^{\la^{(i)}}$ is in both $\Std^{X(i)}(\la^{(i)})$ and $\Std^{Y(i)}(\la^{(i)})$. 
	
	Let $T\in\Std^{X(i)}(\la^{(i)})$. Denote $d_i:=|\la^{(i)}|$. Reading the entries of $T$ along the rows from left to right starting from the first row, we get a sequence $l_1^{x_1}\cdots l_{d_i}^{x_{d_i}}\in \Alph_{X(i)}^{d_i}$. We denote $\bl^T:=l_1\cdots l_{d_i}$ and $\bx^T:=x_1\cdots x_{d_i}$.

	For a function $\T:\Brackets\bla\to \Alph_{X}$ and $i\in I$, we set  $T^{(i)}:=\T|_{\Brackets{\la^{(i)}}}$ to be the restriction of $\T$ to $\Brackets{\la^{(i)}}$. We write $\T=(T^{(0)},\dots,T^{(l)})$, keeping in mind that the restrictions $T^{(i)}$ determine $\T$ uniquely. 
	A {\em standard $X$-colored $\bla$-tableau} is a function
	$\T:\Brackets\bla\to \Alph_{X}$ such that $T^{(i)}\in\Std^{X(i)}(\la^{(i)})$ 
	for all $i\in I$. 
	We denote by $\Std^X(\bla)$ the set  of all standard $X$-colored $\bla$-tableaux. For example, we have the {\em initial $\bla$-tableau}\, 
$$\T^\bla=(T^{\la^{(0)}},\dots,T^{\la^{(l)}})\in \Std^X(\bla)\cap \Std^Y(\bla).$$ 
	For $\T\in\Std^{X}(\bla)$, we denote 
$$\bl^\T:=\bl^{T^{(0)}}\cdots \,\bl^{T^{(l)}}\in[n]^d,\quad \bx^\T:=\bx^{T^{(0)}}\cdots\, \bx^{T^{(l)}}\in X^d,\quad\text{and}\quad\bl^\bla:=\bl^{\T^\bla}.
$$ 
The sequence $\by^\T$ for $\T\in\Std^{Y}(\bla)$ is defined similarly to $\bx^\T$. 
	
	Let $\bla\in\La_+^I(n,d)$ and $\T\in\Std^X(\bla)$, with $\bl^\T=l_1\cdots l_d$ and $\bx^\T=x_1\cdots x_d$. Suppose that there exist $i_1,\dots,i_d\in I$ such that $e_{i_1}x_1=x_1,\,\dots,\,
	e_{i_d}x_d=x_d$. Recalling (\ref{EEps}) and (\ref{EIota}), we define the left weight of $\T$ to be
$$
	\balpha(\T):=\sum_{c=1}^d \biota_{i_c}(\eps_{l_c})\in \La^I(n,d). 
$$
	For $\bmu\in\La^I(n,d)$, we denote 
	\begin{equation}\label{EAl}
		\Std^X(\bla,\bmu):=\{\T\in \Std^X(\bla)\mid \balpha(\T)=\bmu\}.
	\end{equation}
	
\subsection{Symmetric functions}	
	Let $\Sym$ be the ring of symmetric functions over $\Z$ in infinitely many  variables $z_1,z_2,\dots$, see \cite{Mac}, with  basis consisting of Schur functions $s_\la\in\Sym$ for $\la\in\La_+$. 
	Recall that $\Sym$ is a Hopf algebra with coproduct 
	$${\tt\Delta}:\Sym\to \Sym\otimes\Sym,\ s_\la\mapsto \sum_{\mu,\nu\in\La_+}c^{\,\la}_{\mu,\nu}\, s_\mu\otimes s_\nu,$$
where  $c^{\,\la}_{\mu,\nu}$ are the Littlewood-Richardson coefficients, 
see \cite[\S I.5]{Mac}. 

For $n\in\Z_{>0}$, let $\Sym(n)=\Z[z_1,\dots,z_n]^{\Si_n}$ be the ring of symmetric polynomials in $z_1,\dots,z_n$. There is a canonical homomorphism $\rho_n:\Sym\to\Sym(n)$, see \cite[p.18]{Mac}. For $\la\in\La_+(n)$, let $s_\la(z_1,\dots,z_n):=\rho_n(s_\la)\in\Sym(n)$.

For a finite set $S$, we introduce $S$-fold tensor products $\Sym^S:=\Sym^{\otimes S}$ and $\Sym^S(n):=\Sym(n)^{\otimes S}$.  We have the canonical homomorphism   
	\begin{equation}\label{ERhoMult}
		\rho_n^S=\rho_n^{\otimes S}:\Sym^S\to\Sym^S(n).
\end{equation}
For $d\in\Z_{\geq 0}$, we denote by $\Sym^S(n,d)$ the degree $d$ component of $\Sym^S(n)$. 
	
Given $\bnu=(\nu^{(s)})_{s\in S}\in \La^S_+$, we have an element $$
s_\bnu:=\otimes_{t\in S} s_{\nu^{(t)}}\in \Sym^{S}.
$$ 
	If $\bnu\in \La^S_+(n)$, we set 
$$s_\bnu(z_1,\dots,z_n):=\rho_n^S(s_\bnu)\in \Sym^{S}(n).$$

	If $m=|S|$, iterating the coproduct (and using coassociativity and cocommutativity) we get the algebra homomorphism
\begin{equation}\label{EIteratedCoproduct}
{\tt\Delta}^{m-1}: \Sym\to \Sym^{S},
\end{equation}
(with ${\tt\Delta}^{0}$ interpreted as the identity map). 
	We introduce the iterated Littlewood-Richardson coefficients $c^\la_\bnu$ from
	\begin{equation}\label{EMLR}
		{\tt\Delta}^{m-1}(s_\la)=\sum_{\bnu\in \La^S_+}c^\la_\bnu s_\bnu.
	\end{equation}

\section{Generalized Schur algebras}
	\label{SSA}
	Throughout the section, we fix $n\in\Z_{> 0}$. 
We also fix a based quasi-hereditary superalgebra $A_R$ over $R$ with conforming heredity data $I,X,Y$. 
	
	\subsection{Definition}
	\label{SSDefT}
	Let $S$ be a set and $d\in\Z_{\geq 0}$. Recall that the symmetric group $\Si_d$ acts on $S^d$ by place permutations. 
	For $\bs,\bt\in S^d$, we write $\bs\sim\bt$ if $\bs\si=\bt$ for some $\si\in \Si_d$. If $S_1,\dots,S_m$ are sets, then $\Si_d$ acts on $S_1^d\times\dots\times S_m^d$ diagonally. We write 
	$(\bs_1,\dots,\bs_m)\sim (\bt_1,\dots,\bt_m)$ 
	if 
	$(\bs_1,\dots,\bs_m)\si=(\bt_1,\dots,\bt_m)$ for some $\si\in \Si_d$. 
	If $U\subseteq S_1^d\times\dots\times S_m^d$ is a $\Si_d$-invariant subset, we denote by $U/\Si_d$ a complete set of the $\Si_d$-orbit representatives in $U$ and we identify $U/\Si_d$ with the set of all $\Si_d$-orbits on $U$. 
	
	Let $H=H_\0\sqcup H_\1$ be a set of non-zero homogeneous elements of $A_R$. 
	Define $\Seq^H (n,d)$ to be the set of all triples 
	$$
	(\ba,\br, \bs) = ( a_1\cdots a_d,\, r_1\cdots r_d,\, s_1\cdots s_d ) \in  H^d\times[n]^d\times[n]^d
	$$
	such that for all $1\leq k\neq l\leq d$ we have 
	$(a_k,r_k,s_k)=(a_l,r_l, s_l)$ 
	only if $a_k\in H_\0$. Then $\Seq^H (n,d)\subseteq H^d\times[n]^d\times[n]^d$ is a $\Si_d$-invariant subset, so we can choose a set $\Seq^H (n,d)/\Si_d$ of $\Si_d$-orbit representatives and identify it with the set of all $\Si_d$-orbits on $\Seq^H (n,d)$ as in the previous paragraph. 

Sometimes we use a preferred choice of representatives for $\Seq^H (n,d)/\Si_d$ defined as follows. Fix a total order $<$ on 
	$H\times[n]\times[n]$. We have a lexicographic order on $\Seq^H(n,d)$: $(\ba,\br,  \bs)< (\ba',\br',  \bs')$ if and only if there exists $l\in[d]$ such that $(a_k,r_k,s_k)=(a_k',r_k',s_k')$ for all $k<l$ and $(a_l,r_l,s_l)<(a_l',r_l',s_l')$. Denote
	\begin{equation}\label{ESeq0}
		\Seq^H_0(n,d)=\{(\ba, \br, \bs) \in \Seq^H(n,d)\mid (\ba, \br, \bs)\leq (\ba, \br, \bs) \sigma\ \text{for all}\ \sigma \in \mathfrak{S}_d\}.
	\end{equation}

For $(\ba,\br, \bs) \in \Seq^H(n,d)$ and $\si\in\Si_d$, 
	we define
	\begin{align}
		\label{EAngle3}
		\lan\ba, \br,  \bs\ran
		&:=\sharp\{(k,l)\in[d]^2\mid k<l,\ a_k,a_l\in H_\1,\ (a_k,r_k,s_k)> (a_l,r_l, s_l)\}.
	\end{align}
	
Specializing to $H=B$, let $(\bb,\br,  \bs) \in \Seq^B(n,d)$. 
	For $b\in B$ and $r,s\in [n]$, we denote 
$$
[\bb,\br, \bs:b,r,s]:=\sharp\{k\in[d]\mid  (b_k,r_k,s_k)=(b,r,s)\},
$$ 
and, recalling (\ref{EBABC}), we set 
\begin{equation}\label{ECFactorial}
		[\bb,\br, \bs]^!_{\c} :=\prod_{ b\in B_\c,\, r,s\in [n]}[\bb,\br, \bs:b,r,s]!.
	\end{equation}

	Let $M_n(A_R)$ be the superalgebra of $n\times n$ matrices with entries in $A_R$. For $a\in A_R$, we denote by  
	$
	\xi_{r,s}^a\in M_n(A_R)
	$ 
	the matrix with $a$ in the position $(r,s)$ and zeros elsewhere. By definition, $|\xi_{r,s}^a|=|a|$. 
For each $d\in\Z_{\geq 0}$ we have a superalgebra structure on $M_n(A_R)^{\otimes d}$, and then on \(\bigoplus_{d\geq 0} M_n(A_R)^{\otimes d}\).

Recall from \cite[\S4.1]{EK1}, that \(\bigoplus_{d\geq 0} M_n(A_R)^{\otimes d}\) is a bisuperalgebra 
	with 
	the coproduct \(\coproduct\) defined by
	\begin{align*}
		\coproduct\,:\,\, M_n(A_R)^{\otimes d}\,\, &\to \,\,\bigoplus_{c=0}^d M_n(A_R)^{\otimes c} \otimes M_n(A_R)^{\otimes (d-c)}\\
		\xi_1 \otimes \cdots \otimes \xi_d\,\,&\mapsto\,\, \sum_{c=0}^d (\xi_1 \otimes \cdots \otimes \xi_c) \otimes (\xi_{c+1} \otimes \cdots \otimes \xi_d).
	\end{align*}
Moreover, recalling (\ref{EStarNotationGen}), \(\bigoplus_{d\geq 0} M_n(A_R)^{\otimes d}\) is also a bisuperalgebra with respect to $\coproduct$ and $*$, see \cite[Lemma 3.12]{EK1}.


	According to (\ref{ESiAct}) $\Si_d$ acts on $M_n(A_R)^{\otimes d}$ with superalgebra automorphisms, and using the notation (\ref{EGa}), we have the subsuperalgebra of invariants 
	$
	\Ga^d M_n(A_R)\subseteq M_n(A_R)^{\otimes d}.
	$ 
	For $(\ba,\br,\bs)\in\Seq^H(n,d)$, we have elements 
	\begin{equation*}\label{EXiDef}
		\xi_{\br,\bs}^\ba:= \sum_{(\bc,\bt,\bu)\sim(\ba,\br,\bs)} 
		(-1)^{\lan\ba,\br,\bs\ran+\lan\bc,\bt,\bu\ran}
		\xi_{t_1,u_1}^{c_1}\otimes\dots\otimes \xi_{t_d,u_d}^{c_d}
		\in \Ga^d M_n(A_R).
\end{equation*}
We have the following $R$-basis of $\Ga^d M_n(A_R)$:
\begin{equation}\label{EBasisS}
	\{\xi_{\br,\bs}^\bb \mid (\bb,\br,\bs)\in\Seq^B(n,d)/\Si_d\}. 
\end{equation}

For $(\bb,\br,\bs)\in\Seq^B(n,d)$, we also set 
$$
\eta^\bb_{\br,\bs}:=[\bb,\br,  \bs]^!_{\c}\, \xi^\bb_{\br,  \bs}, 
$$
and  
$$
T(n,d)_R=T^{A}(n,d)_R:=\spa_R\big\{\,\eta^\bb_{\br,\bs}\mid (\bb,\br,\bs)\in\Seq^B(n,d)\,\big\}\subseteq \Ga^d M_n(A_R).
$$ 
Let 
$$
T(n)_R := \bigoplus_{d\geq 0} T(n,d)_R.
$$

By \cite[Proposition 3.12, Lemma 3.10]{KMgreen2}, $T(n,d)_R$ is a unital $R$-subsuperalgebra of $M_n(A_R)^{\otimes d}$ with $R$-basis 
\begin{equation}\label{EBasisT}
	\big\{\,\eta^\bb_{\br,\bs}\mid (\bb,\br,\bs)\in\Seq^B(n,d)/\Si_d\,\big\}. 
\end{equation}
Moreover, by \cite[Corollary 3.24]{KMgreen2}, 
$T(n)_R$ is a sub-bisuperalgebra of\, $\bigoplus_{d\geq 0} M_n(A_R)^{\otimes d}$ (with respect to $\coproduct$ and the usual product). Moreover:

\begin{Lemma} \label{LNablaStar} {\rm \cite[Corollary 4.4]{KMgreen2}} 
	$T(n)_R$ is a sub-bisuperalgebra of $\bigoplus_{d\geq 0} M_n(A_R)^{\otimes d}$ with respect to the coproduct $\coproduct$ and the product $*$
\end{Lemma}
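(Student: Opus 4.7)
The strategy is to reduce the claim to showing that $T(n)_R$ is closed under the star product $*$. Indeed, the ambient algebra $\bigoplus_{d\geq 0} M_n(A_R)^{\otimes d}$ is already a bisuperalgebra with respect to $(\coproduct,*)$ by \cite[Lemma~3.12]{EK1} cited just before the statement, and $T(n)_R$ is closed under $\coproduct$ and contains the unit by the sub-bisuperalgebra statement immediately preceding this lemma. Consequently, the bialgebra compatibility relating $\coproduct$ and $*$ transfers automatically to $T(n)_R$ as soon as closure under $*$ is established, and it suffices to verify this closure on the $R$-basis in \eqref{EBasisT}.

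To that end, I would fix basis elements $\eta^{\bb}_{\br,\bs}\in T(n,c)_R$ and $\eta^{\bb'}_{\br',\bs'}\in T(n,d-c)_R$ and expand
\[
\eta^{\bb}_{\br,\bs}*\eta^{\bb'}_{\br',\bs'}=[\bb,\br,\bs]^!_\c\,[\bb',\br',\bs']^!_\c\sum_{\sigma\in{}^{(c,d-c)}\D}\bigl(\xi^{\bb}_{\br,\bs}\otimes\xi^{\bb'}_{\br',\bs'}\bigr)^{\sigma}.
\]
Each $\xi$-factor is a signed sum over the $\Si_c$-, respectively $\Si_{d-c}$-, orbit of its defining triple, so the tensor product is a signed sum over $(\Si_c\times\Si_{d-c})$-rearrangements of the concatenation $(\bb\bb',\br\br',\bs\bs')$; summing over the coset representatives $\sigma$ then promotes this to a signed sum over the full $\Si_d$-orbit. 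Provided $(\bb\bb',\br\br',\bs\bs')$ still lies in $\Seq^B(n,d)$, the result rearranges as a scalar multiple of $\xi^{\bb\bb'}_{\br\br',\bs\bs'}$; if a duplicated $B_\1$-entry appears in the concatenation, the antisymmetrisation built into $\xi$ causes the star product to vanish.

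The main obstacles are, first, verifying that the combined sign, assembled from $\lan\sigma;-\ran$, $\lan\bb,\br,\bs\ran$, and $\lan\bb',\br',\bs'\ran$, collapses via \eqref{EAngle2}--\eqref{EAngle3} to the sign $\lan\bb\bb',\br\br',\bs\bs'\ran$ appearing in the definition of $\xi^{\bb\bb'}_{\br\br',\bs\bs'}$; and second, that once one multiplies by the two normalising factorials in the $\eta$s, the overall rational coefficient on $\xi^{\bb\bb'}_{\br\br',\bs\bs'}$ equals $[\bb\bb',\br\br',\bs\bs']^!_\c$ times an integer multinomial coefficient recording how the repeated $B_\c$-entries of $\bb$ and $\bb'$ interleave in the concatenation. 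This is precisely the denominator needed to rewrite the result in terms of $\eta^{\bb\bb'}_{\br\br',\bs\bs'}$ with an $R$-coefficient, establishing closure. I expect the sign bookkeeping to be the harder of the two steps, and I would handle it by induction on the length of $\sigma$, reducing to the case of a single shuffle transposition where the change in $\lan-;-\ran$ can be read off directly from \eqref{EAngle3}.
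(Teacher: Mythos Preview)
The paper does not prove this lemma: it is stated with a citation to \cite[Corollary~4.4]{KMgreen2} and no argument is given here. So there is no in-paper proof to compare against; your proposal is an attempt to reconstruct a direct proof.

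Your strategy is sound. Closure under $\coproduct$ and the unit are already in the paper, so only closure under $*$ needs checking, and your computation of $\xi^{\bb}_{\br,\bs}*\xi^{\bb'}_{\br',\bs'}$ is the right one: the result lies in $\Gamma^d M_n(A_R)$ and is supported on the single $\Si_d$-orbit of the concatenation, hence is a scalar multiple of $\xi^{\bb\bb'}_{\br\br',\bs\bs'}$ (or zero when the concatenation falls outside $\Seq^B(n,d)$). A cleaner way to extract the scalar than your proposed induction on $\ell(\sigma)$ is to compare the coefficient of a single pure tensor on both sides; this gives $m=|\Si_{\Triple}|/(|\Si_{\Triple^1}|\,|\Si_{\Triple^2}|)=\prod_{b\in B_\0,\,r,s}\binom{n_1+n_2}{n_1}$, where $n_k$ is the multiplicity of $(b,r,s)$ in $\Triple^k$.

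There is one slip in your integrality argument. You write that the surviving multinomial coefficient records how the repeated $B_\c$-entries interleave, but in fact it is the $B_\a$-entries. Concretely,
\[
\frac{[\Triple^1]^!_\c\,[\Triple^2]^!_\c\,m}{[\Triple]^!_\c}
=\prod_{b\in B_\a,\,r,s}\binom{n_1+n_2}{n_1},
\]
because the $B_\c$-factors in $m$ cancel exactly against $[\Triple]^!_\c/([\Triple^1]^!_\c[\Triple^2]^!_\c)$, leaving only the $B_\a$ binomials. This does not affect the conclusion (the product is still an integer), but the bookkeeping as you stated it would not close up. With that correction your argument goes through.
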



Extending scalars from $R$ to $\F$, we now define the $\F$-superalgebra 
$$T(n,d)_\F=T^A(n,d)_\F:=\F\otimes_R T(n,d)_R.$$
We denote $1_\F\otimes \eta^\bb_{\br,\bs}\in T(n,d)_\F$ again by $\eta^\bb_{\br,\bs}$, the map $\id_\F\otimes\, \coproduct$ again by $\coproduct$, etc. In fact, when working over the field, we will often drop the index and write simply 
\begin{equation}\label{ENoIndex}
T(n,d):=T(n,d)_\F.
\end{equation}

If $W_1$ is a $T(n,d_1)$-supermodule and $W_2$ is a $T(n,d_2)$-supermodule, we  consider $W_1\otimes W_2$ as a $T(n,d_1+d_2)$-supermodule via the coproduct $\coproduct$.

\subsection{Properties of product and coproduct}
\label{SSProdCoprod}
In this section we work over $R$. 
Define the structure constants $\kappa^b_{a,c}\in R$ of $A_R$ from
$
ac=\sum_{b\in B}\kappa^b_{a,c} b
$
for $a,c\in A_R$.
More generally, for 
$\bb=(b_1,\dots, b_d)\in B^d\quad \text{and}\quad \ba=(a_1,\dots, a_d),\,\bc=(c_1,\dots, c_d)\in A_R^d,$ 
we define 
$$
\ka^\bb_{\ba,\bc}:=\ka^{b_1}_{a_1,c_1}\cdots \ka^{b_d}_{a_d,c_d}\in R.
$$
Recall the notation (\ref{EAngle3}),  (\ref{EAngle2}). 
The following generalization of \cite[(2.3b)]{Green} follows from  \cite[(6.14)]{EK1}, cf. \cite[Proposition 3.6]{KMgreen2}.

\begin{Proposition} \label{CPR} 
	Let $(\ba,\bp,  \bq),\, (\bc,\bu,  \bv) \in \Seq^B(n,d)$.
	Then in $T(n,d)_R$ we have
	$$
	\eta^\ba_{\bp,  \bq}\, \eta^\bc_{\bu,  \bv}=\sum_{[\bb,\br,\bs]\in\Seq^B(n,d)/\Si_d} g_{\ba,\bp, \bq;\bc,  \bu,\bv}^{\bb,\br,\bs}\,  \eta^\bb_{\br,  \bs}
	$$
	where 
	$$
	g_{\ba,\bp, \bq;\bc,  \bu,\bv}^{\bb,\br,\bs}= 
	\frac{[\ba, \bp, \bq]^!_{\c} \cdot [\bc, \bu, \bv]^!_{\c}}{[\bb, \br, \bs]^!_{\c}}
	\sum_{\ba', \bc',\bt} 
	(-1)^{\lan\ba,\bp,  \bq\ran 
		+\lan\bc,\bu,  \bv\ran
		+ \lan\ba',\br,  \bt\ran
		+\lan\bc',\bt, \bs\ran+\lan\ba',\bc'\ran} \, 
	\kappa_{\ba',\bc'}^{\bb},
	$$
	the sum being over all 
	$\ba', \bc'\in B^d$ and $\bt\in[n]$  
	such that $(\ba',\br,  \bt) \sim (\ba,\bp,  \bq)$ and $(\bc',\bt,  \bs)\sim (\bc,\bu,\bv)$. 
\end{Proposition}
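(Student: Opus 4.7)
The plan is to expand both sides by their definitions and match term-by-term inside $\Ga^d M_n(A_R)$. First I would clear the $\eta/\xi$ normalization by using $\eta^\bb_{\br,\bs}=[\bb,\br,\bs]^!_\c\,\xi^\bb_{\br,\bs}$, so that the identity to prove reduces to a formula for $\xi^\ba_{\bp,\bq}\,\xi^\bc_{\bu,\bv}$ in the basis (\ref{EBasisS}); the ratio of $\c$-factorials appearing in the stated coefficient $g^{\bb,\br,\bs}_{\ba,\bp,\bq;\bc,\bu,\bv}$ is then accounted for automatically by the passage back and forth between the two normalizations.

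For the main computation I would write each $\xi^\ba_{\bp,\bq}$ as its defining $\Si_d$-symmetrized sum over $(\ba'',\bp'',\bq'')\sim(\ba,\bp,\bq)$ (and similarly for $\xi^\bc_{\bu,\bv}$), multiply componentwise, and combine three ingredients: the super-tensor product rule, which contributes a sign $(-1)^{\lan\ba'',\bc''\ran}$; the matrix-unit identity $\xi^{a''_k}_{p''_k,q''_k}\,\xi^{c''_k}_{u''_k,v''_k}=\delta_{q''_k,u''_k}\,\xi^{a''_kc''_k}_{p''_k,v''_k}$, which forces a common middle sequence $\bt:=\bq''=\bu''$; and the expansion $a''_kc''_k=\sum_{b_k\in B}\ka^{b_k}_{a''_k,c''_k}\,b_k$ in the heredity basis. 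Setting $\br:=\bp''$, $\bs:=\bv''$, and renaming $(\ba'',\bc'')$ to $(\ba',\bc')$, the product becomes a sum over triples $(\ba',\bc',\bt)$ with $(\ba',\br,\bt)\sim(\ba,\bp,\bq)$ and $(\bc',\bt,\bs)\sim(\bc,\bu,\bv)$ of simple tensors $\xi^{b_1}_{r_1,s_1}\otimes\cdots\otimes\xi^{b_d}_{r_d,s_d}$ weighted by $\ka^\bb_{\ba',\bc'}$ times the accumulated signs.

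Finally I would regroup the resulting simple tensors by $\Si_d$-orbit, and for each orbit representative $[\bb,\br,\bs]\in\Seq^B(n,d)/\Si_d$ recognize the corresponding sub-sum as an inner symmetrization that collapses to $\xi^\bb_{\br,\bs}$; reinstating the $\eta$-normalization then produces the stated $g$.

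The main obstacle is the sign bookkeeping. One has to verify that the exponent $\lan\ba,\bp,\bq\ran+\lan\bc,\bu,\bv\ran+\lan\ba',\br,\bt\ran+\lan\bc',\bt,\bs\ran+\lan\ba',\bc'\ran$ in the statement arises from assembling the two symmetrization signs in the definitions of $\xi^\ba_{\bp,\bq}$ and $\xi^\bc_{\bu,\bv}$, the super-tensor sign $\lan\ba'',\bc''\ran$ from componentwise multiplication, and the sign that is reintroduced when recognizing $\xi^\bb_{\br,\bs}$ in its own defining orbit sum. This verification is essentially the same as in the classical type $A$ Schur algebra case \cite[(2.3b)]{Green} and is carried out in \cite[(6.14)]{EK1} for the super setting; it relies only on standard manipulations of inversion counts under place permutations.
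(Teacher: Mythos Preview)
Your outline is correct and is precisely the computation carried out in the cited references. Note that the paper does not give an in-text proof of this proposition at all: it simply records it as a consequence of \cite[(6.14)]{EK1} (cf.\ \cite[Proposition~3.6]{KMgreen2}), which is exactly the reference you invoke for the sign bookkeeping. So your approach and the paper's are the same, with the only difference that you have unpacked the argument while the paper defers entirely to the citation.
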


\begin{Lemma} \label{LSingleSep} {\rm \cite[Lemma 4.6]{KMgreen2}} Let $q\in\Z_{>0}$, $d_1,\dots,d_q\in\Z_{\geq 0}$ with $d_1+\dots+d_q=d$, and for $m=1,\dots,q$, we have 
	$
	(\bb^{m},\br^{m}, \bs^{m})
	\in\Seq^B(n,d_m)
	$ with $\bb^{m}=b^{m}_1\cdots b^{m}_{d_m},\ \br^{m}=r^{m}_1\cdots r^{m}_{d_m},\ 
	\bs^{m}=s^{m}_1\cdots s^{m}_{d_m}
	$. 
	If 
	$(b^{m}_t,r^{m}_t,s^{m}_t)\neq (b^{l}_u,r^{l}_u,s^{l}_u)$ for all $1\leq m\neq l\leq q$, $1\leq t\leq d_m$ and $1\leq u\leq d_l$, then  
	$$
	\eta^{\bb^1\cdots\bb^q}_{\br^1\cdots\br^q,\bs^1\cdots\bs^q}=\eta_{\br^{1},\bs^{1}}^{\bb^{1}}*\dots*\eta_{\br^{q},\bs^{q}}^{\bb^{q}}.
	$$
\end{Lemma}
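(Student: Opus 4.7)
\smallskip

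My plan is to reduce the general case to $q=2$ by induction using associativity of $*$ (available on $T(n)_R$ thanks to Lemma~\ref{LNablaStar}), and to establish the $q=2$ identity by a direct bijective and sign-matching argument.

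\textbf{Step 1 (factoring out the $[\cdot]^!_\c$ weight).} Since by hypothesis no triple $(b,r,s)$ occurs in both $(\bb^1,\br^1,\bs^1)$ and $(\bb^2,\br^2,\bs^2)$, each count $[\bb^1\bb^2,\br^1\br^2,\bs^1\bs^2:b,r,s]$ equals $[\bb^m,\br^m,\bs^m:b,r,s]$ for exactly one $m\in\{1,2\}$ and vanishes for the other, so
$[\bb^1\bb^2,\br^1\br^2,\bs^1\bs^2]^!_\c=[\bb^1,\br^1,\bs^1]^!_\c\cdot[\bb^2,\br^2,\bs^2]^!_\c$. It therefore suffices to prove the analogous identity for the raw symmetrizers $\xi^\bb_{\br,\bs}=\xi^{\bb^1}_{\br^1,\bs^1}*\xi^{\bb^2}_{\br^2,\bs^2}$.

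\textbf{Step 2 (bijection of orbits).} Set $\bc^\circ:=\bb^1\bb^2$, $\bt^\circ:=\br^1\br^2$, $\bu^\circ:=\bs^1\bs^2$. Any $\sigma\in\Si_d$ with $d=d_1+d_2$ decomposes uniquely as $\sigma=(\sigma_1,\sigma_2)\tau$ with $\sigma_m\in\Si_{d_m}$ and $\tau\in{}^{(d_1,d_2)}\D$. Since the triples appearing in segment~$1$ are disjoint from those in segment~$2$, the positions of an element $(\bc',\bt',\bu')\sim(\bc^\circ,\bt^\circ,\bu^\circ)$ uniquely tell us which positions hold segment-$1$ triples (this determines $\tau$), and the $\Si_{d_m}$-orbit elements within each block then determine $(\sigma_1,\sigma_2)$ modulo the stabilizers. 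This yields a bijection between the $\Si_d$-orbit of $(\bc^\circ,\bt^\circ,\bu^\circ)$ and the Cartesian product of the two smaller orbits times ${}^{(d_1,d_2)}\D$, matching the indexing of the expansion of $\xi^{\bb^1}_{\br^1,\bs^1}*\xi^{\bb^2}_{\br^2,\bs^2}$ via \eqref{EStarNotationGen}.

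\textbf{Step 3 (sign matching --- the main obstacle).} Given the bijection, the content is that for the term corresponding to $(\bc^1,\bt^1,\bu^1),(\bc^2,\bt^2,\bu^2),\tau$, the left-hand sign
$\lan\bb,\br,\bs\ran+\lan\bc',\bt',\bu'\ran$
agrees modulo~$2$ with the right-hand sign
$\lan\bb^1,\br^1,\bs^1\ran+\lan\bc^1,\bt^1,\bu^1\ran+\lan\bb^2,\br^2,\bs^2\ran+\lan\bc^2,\bt^2,\bu^2\ran+\lan\tau;\,\bc^\circ\ran$. I expect to verify this by two separate congruences. First, writing $\lan\bb,\br,\bs\ran=\lan\bb^1\ran+\lan\bb^2\ran+M_{12}$ and $\lan\bc^\circ,\bt^\circ,\bu^\circ\ran=\lan\bc^1\ran+\lan\bc^2\ran+N_{12}$, the cross terms $M_{12}$ and $N_{12}$ depend only on the multisets of odd triples in each segment, hence $M_{12}=N_{12}$ and the contributions cancel modulo~$2$. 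Second, substituting $k=\tau m,l=\tau n$ into the definition \eqref{EAngle3} of $\lan\bc',\bt',\bu'\ran$ and partitioning pairs of odd positions $(k,l)$ with $k<l$ according to the four combinations of the inequality $\tau^{-1}k\lessgtr\tau^{-1}l$ and the triple comparison $(c^\circ_k,t^\circ_k,u^\circ_k)\lessgtr(c^\circ_l,t^\circ_l,u^\circ_l)$ yields
$\lan\bc',\bt',\bu'\ran\equiv\lan\bc^\circ,\bt^\circ,\bu^\circ\ran+\lan\tau;\bc^\circ\ran\pmod 2$. Combining these two congruences gives exactly the desired equality of signs. The bookkeeping in this partition into four sets is where I expect the bulk of the work to lie.

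\textbf{Step 4 (induction).} Once the $q=2$ case is established, the general statement follows by induction: for each $m=2,\ldots,q-1$, the concatenation $\bb^1\cdots\bb^m$ still has no triples in common with $\bb^{m+1}$, so the $q=2$ case applies, and associativity of $*$ on $T(n)_R$ (Lemma~\ref{LNablaStar}) lets us assemble the iterated identity.
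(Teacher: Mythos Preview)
The paper does not prove this lemma at all: it is stated with a citation to \cite[Lemma~4.6]{KMgreen2} and no argument is given. So there is no ``paper's proof'' to compare against; you are supplying a direct proof where the paper simply imports the result.

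Your argument is essentially correct. Step~1 (the $[\cdot]^!_\c$ factorization) and Step~4 (induction via associativity of $*$) are routine. The bijection in Step~2 is exactly right: the disjointness hypothesis means the segment-$1$ triples and segment-$2$ triples can be told apart inside any orbit element, so $\tau\in{}^{(d_1,d_2)}\D$ is recovered from the subset $\tau^{-1}([d_1])$, and then $(\bc^1,\bt^1,\bu^1),(\bc^2,\bt^2,\bu^2)$ are read off from $(\bc,\bt,\bu)\tau^{-1}$. The sign calculation in Step~3 also goes through: the cross terms $M_{12}$ and $N_{12}$ are equal (not merely congruent) because odd triples in $\Seq^B$ cannot repeat, so the sets of odd segment-$1$ and segment-$2$ triples are literally the same for $\bb$ and for $\hat\bc$; and the congruence $\lan\bc,\bt,\bu\ran\equiv\lan\hat\bc,\hat\bt,\hat\bu\ran+\lan\tau;\hat\bc\ran\pmod 2$ is the standard ``permuting a word with distinct odd letters changes the inversion count by the number of odd inversions of the permutation'' fact.

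One notational slip: you set $\bc^\circ:=\bb^1\bb^2$ at the start of Step~2 but then in Step~3 you decompose $\lan\bc^\circ,\bt^\circ,\bu^\circ\ran$ as $\lan\bc^1\ran+\lan\bc^2\ran+N_{12}$ and write the $\tau$-sign as $\lan\tau;\bc^\circ\ran$. What you actually need there is $\hat\bc:=\bc^1\bc^2$ (which varies with the orbit element on the right-hand side), not the fixed $\bb^1\bb^2$. With that correction the two congruences combine exactly as you indicate.
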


To describe $\coproduct$ on basis elements, let $\Triple=(\bb,\br,\bs)\in\Seq^B_0(n,d)$. We write 
$
\eta_\Triple:=\eta^\bb_{\br,\bs}$ and $\Triple\si:=(\bb,\br,\bs)\si$ for $\si\in \Si_d$.  
We have that the stabilizer 
$
\Si_\Triple:=\{\si\in\Si_d\mid \Triple\si=\Triple\}
$
is a standard parabolic subgroup. Let ${}^\Triple\D$ be the set of the shortest coset representatives in $\Si_\Triple\backslash\Si_d$. We also set 
\begin{equation}\label{EC!}
	[\Triple]^!_{\c}:=[\bb,\br,\bs]^!_{\c}.
\end{equation}
If $d=d_1+d_2$, $\Triple^1=(\bb^1, \br^1, \bs^1)\in\Seq^B(n,d_1)$ and $\Triple^2=(\bb^2, \br^2, \bs^2)\in\Seq^B(n,d_2)$, we denote 
$
\Triple^1\Triple^2:=(\bb^1\bb^2, \br^1\br^2, \bs^1\bs^2)\in B^d\times[n]^d\times [n]^d.
$
Recall the notation (\ref{ESeq0}). For \(\Triple \in \Seq^B_0(n,d)\)  define 
\begin{equation}\label{ESplit}
	\textup{Spl}(\Triple):=\bigsqcup_{0\leq e\leq d}\big\{(\Triple^1, \Triple^2)\in \Seq^B_0(n,e) \times \Seq^B_0(n,d-e) \mid 
	\Triple^1\Triple^2\sim \Triple\big\}.
\end{equation}
For $(\Triple^1,\Triple^2) \in \textup{Spl}(\Triple)$, let \(\si^{\Triple}_{\Triple^1,\Triple^2}\) be the unique element of\, ${}^{\Triple}\mathscr{D}$ such that 
$
\Triple\si^{\Triple}_{\Triple^1,\Triple^2}  = \Triple^1\Triple^2.
$ 
Recalling the notation (\ref{EAngleSi}), we have:

\begin{Lemma}\label{coprodeta} {\rm \cite[Corollary 3.24]{KMgreen2}} 
	If \(\Triple=(\bb,\br,\bs) \in \Seq^B_0(n,d)\) then 
	\begin{align*}
		\coproduct(\eta_\Triple)=\sum_{(\Triple^1,\Triple^2) \in \textup{Spl}(\Triple)} 
		(-1)^{\langle \si^{\Triple}_{\Triple^1,\Triple^2}; \bb \rangle}
		{\small \frac{[\Triple]^!_{\c}}{[\Triple^1]^!_{\c}[\Triple^2]^!_{\c}}}
		\eta_{\Triple^1} \otimes \eta_{\Triple^2}.
	\end{align*}
\end{Lemma}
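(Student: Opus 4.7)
The plan is to expand $\eta_\Triple=[\Triple]^!_\c\,\xi^\bb_{\br,\bs}$ into its defining signed orbit sum, apply $\coproduct$ termwise, and then regroup the result according to the splittings in $\textup{Spl}(\Triple)$. First, using the definition of $\xi^\bb_{\br,\bs}$ together with the componentwise cutting rule for $\coproduct$ on $M_n(A_R)^{\otimes d}$, I would write
$$
\coproduct(\eta_\Triple)=[\Triple]^!_\c\sum_{(\bc,\bt,\bu)\sim\Triple}(-1)^{\lan\bb,\br,\bs\ran+\lan\bc,\bt,\bu\ran}\sum_{e=0}^{d}(\xi^{c_1}_{t_1,u_1}\otimes\cdots\otimes\xi^{c_e}_{t_e,u_e})\otimes(\xi^{c_{e+1}}_{t_{e+1},u_{e+1}}\otimes\cdots\otimes\xi^{c_d}_{t_d,u_d}).
$$

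Second, for each fixed $e$ the split pair $\bigl((c_1\cdots c_e,t_1\cdots t_e,u_1\cdots u_e),(c_{e+1}\cdots c_d,t_{e+1}\cdots t_d,u_{e+1}\cdots u_d)\bigr)$ lies in some $\Si_e\times\Si_{d-e}$-orbit on $\Seq^B(n,e)\times\Seq^B(n,d-e)$ with a unique representative $(\Triple^1,\Triple^2)\in \Seq^B_0(n,e)\times\Seq^B_0(n,d-e)$; by construction $(\Triple^1,\Triple^2)\in\textup{Spl}(\Triple)$ and the unique coset representative $\si^\Triple_{\Triple^1,\Triple^2}\in{}^\Triple\D$ identified in the statement is the one carrying $\Triple$ to $\Triple^1\Triple^2$. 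I would then regroup the double sum by $(\Triple^1,\Triple^2)$ and recognise the inner sum, up to signs and stabilizer overcounts, as $\xi^{\bb^1}_{\br^1,\bs^1}\otimes\xi^{\bb^2}_{\br^2,\bs^2}$. Converting $\xi$'s back to $\eta$'s, the stabilizer ratio $|\Si_\Triple|/(|\Si_{\Triple^1}|\cdot|\Si_{\Triple^2}|)$ combines with the factorials to leave exactly the multinomial coefficient $[\Triple]^!_\c/([\Triple^1]^!_\c[\Triple^2]^!_\c)$: the contributions from entries in $B_\a$ cancel against the stabilizer sizes, while those from $B_\c$ survive as the stated factor.

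Third, I would carefully track the residual sign. The intrinsic signs $(-1)^{\lan\bb,\br,\bs\ran+\lan\bc,\bt,\bu\ran}$ in the expansion of $\xi^\bb_{\br,\bs}$, together with the analogous intrinsic signs of $\xi^{\bb^1}_{\br^1,\bs^1}\otimes\xi^{\bb^2}_{\br^2,\bs^2}$, measure the parity of the underlying permutation restricted to positions occupied by odd elements. By the definition (\ref{EAngleSi}) of $\lan\si;\bb\ran$ and by (\ref{EAngle3}), this residual sign is precisely $(-1)^{\lan\si^\Triple_{\Triple^1,\Triple^2};\bb\ran}$, namely the Koszul sign for reordering the odd entries of $\bb$ across the cut.

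The main obstacle is the bookkeeping in the last two steps: one must disentangle three overlapping symmetric-group actions --- the $\Si_d$-action defining $\xi^\bb_{\br,\bs}$ as an invariant, the $\Si_e\times\Si_{d-e}$-action on $M_n(A_R)^{\otimes e}\otimes M_n(A_R)^{\otimes(d-e)}$, and the coset space ${}^\Triple\D$ parameterizing the splittings --- and to verify that the inversion statistic on shortest coset representatives is additive under these decompositions. That additivity, combined with the observation that odd entries of $\bb$ can never repeat within $\Triple$, is what makes the accumulated signs collapse to the single factor $(-1)^{\lan\si^\Triple_{\Triple^1,\Triple^2};\bb\ran}$ appearing in the formula.
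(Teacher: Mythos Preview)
The paper does not give its own proof of this lemma: it is stated with a bare citation to \cite[Corollary~3.24]{KMgreen2} and no argument. So there is nothing to compare your approach against in this paper, and your sketch is essentially the natural direct proof one would expect (and presumably close to what the cited reference does).

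That said, your second paragraph contains a muddle that you should clean up. There are no ``stabilizer overcounts'' to worry about. For a fixed $(\Triple^1,\Triple^2)\in\textup{Spl}(\Triple)$ of sizes $(e,d-e)$, the elements $(\bc,\bt,\bu)$ of the $\Si_d$-orbit of $\Triple$ whose cut at position $e$ lands in the $(\Si_e\times\Si_{d-e})$-orbit of $(\Triple^1,\Triple^2)$ are \emph{exactly} the concatenations $\Triple'^1\Triple'^2$ with $\Triple'^1\sim\Triple^1$ and $\Triple'^2\sim\Triple^2$, and concatenation is injective. So the inner sum is, up to a single global sign, literally $\xi_{\Triple^1}\otimes\xi_{\Triple^2}$ with coefficient $1$. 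The factor $[\Triple]^!_\c/([\Triple^1]^!_\c[\Triple^2]^!_\c)$ then appears purely from the three substitutions $\eta_{\Triple}=[\Triple]^!_\c\,\xi_\Triple$, $\xi_{\Triple^k}=[\Triple^k]^{!-1}_\c\,\eta_{\Triple^k}$; no ratio of stabilizer orders enters, and there is nothing for the $B_\a$-contributions to ``cancel against.'' Your claimed mechanism (stabilizer ratio combining with factorials, $B_\a$ parts cancelling) is simply not what happens.

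Your sign analysis is on the right track but needs one more ingredient to be complete: you must check that for $(\bc,\bt,\bu)=\Triple'^1\Triple'^2$ in the orbit of $\Triple^1\Triple^2$, the sign $(-1)^{\lan\bb,\br,\bs\ran+\lan\bc,\bt,\bu\ran}$ factors as $(-1)^{\lan\si^\Triple_{\Triple^1,\Triple^2};\bb\ran}$ times the product of the intrinsic signs of $\Triple'^1$ relative to $\Triple^1$ and of $\Triple'^2$ relative to $\Triple^2$. This is the additivity of the odd-inversion statistic under the decomposition $\Si_d\supseteq\Si_\Triple\cdot{}^{(e,d-e)}\D\cdot(\Si_e\times\Si_{d-e})$ that you allude to; it is straightforward but should be stated rather than gestured at.
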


\subsection{Idempotents and characters}
Let $\la\in\La(n,d)$. Set 
$
\bl^\la:=1^{\la_1}\cdots n^{\la_n}.
$ 
For an idempotent $e\in A$ we have an idempotent 
$
\eta_\la^e:=\eta^{e^d}_{\bl^\la,\bl^\la}\in T(n,d)$. 
Let $e_0,\dots,e_l\in A$ be the standard idempotents. 
For each \(\bla=(\la^{(0)},\dots,\la^{(l)}) \in \La^I(n,d)\), we have the idempotent
$$
\eta_{\bla}:=\eta_{\la^{(0)}}^{e_0} * \cdots * \eta_{\la^{(l)}}^{e_l} \in T^{A}(n,d).
$$
The idempotents $\eta_{\bla}$ are orthogonal.



For $\la=(\la_1,\dots,\la_n)\in\La(n)$, define the monomial  $z^\la:=z_1^{\la_1}\cdots z_n^{\la_n}\in\Z[z_1,\dots,z_n].$ 
For $\bla\in\La^I(n)$, we now set 
$$z^\bla:=z^{\la^{(0)}}\otimes z^{\la^{(1)}}\otimes\dots\otimes z^{\la^{(l)}}\in \Z[z_1,\dots,z_n]^{\otimes I}.$$
Following \cite[\S5A]{KMgreen2}, see especially \cite[Lemma~5.9]{KMgreen2}, for a $T(n,d)$-module $V$, we define its {\em formal character}  
$$
\ch V := \sum_{\bmu\in\La^I(n,d)}(\dim\, \eta_\bmu V)z^\bmu\in \Sym^I(n,d).
$$
If $\sum_{i\in I}e_i=1_A$, then 
$1_{T^{A}(n,d)}=\sum_{\bla\in\La^{I}(n,d)}\eta_\bla,
$
but we do not need to assume this. So in general we might have $\sum_{\bmu\in\La^I(n,d)}\eta_\bmu V\subsetneq V$.

\begin{Lemma}\label{LChProd} \cite[Lemma 5.10]{KMgreen2}
If\, $W_1\in\mod{T(n,d_1)}$ and\, $W_2\in\mod{T(n,d_2)}$, then\, 
$
\ch(W_1 \otimes W_2) = \ch(W_1) \, \ch(W_2).
$
\end{Lemma}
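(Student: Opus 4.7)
The plan is to reduce the character identity to a grouplike-style coproduct formula for the weight idempotents, namely
\begin{equation*}
\coproduct(\eta_\bla)=\sum_{\bla^1+\bla^2=\bla}\eta_{\bla^1}\otimes\eta_{\bla^2}
\end{equation*}
for every $\bla\in\La^I(n,d)$, where the sum is over ordered decompositions of $\bla$ as a sum of two $I$-multicompositions in $\La^I(n,e)\times\La^I(n,d-e)$, $0\leq e\leq d$. Once this is in hand, the rest of the argument is essentially formal bialgebra bookkeeping.

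To obtain the coproduct formula, I first identify $\eta_\bla$ as a single basis element. Each factor $\eta_{\la^{(i)}}^{e_i}=\eta^{e_i^{|\la^{(i)}|}}_{\bl^{\la^{(i)}},\bl^{\la^{(i)}}}$ is a basis element, and because the standard idempotents $e_0,\dots,e_l$ are pairwise distinct, the triples entering the various factors have pairwise disjoint supports. Iterating Lemma~\ref{LSingleSep} therefore gives $\eta_\bla=\eta^\bb_{\br,\br}$ with $\bb=e_0^{|\la^{(0)}|}\cdots e_l^{|\la^{(l)}|}$ and $\br=\bl^{\la^{(0)}}\cdots\bl^{\la^{(l)}}$ as concatenated sequences. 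Applying Lemma~\ref{coprodeta} to any $\Si_d$-representative of the triple $\Triple=(\bb,\br,\br)$ in $\Seq^B_0(n,d)$, two simplifications occur: all entries of $\bb$ are even, so every sign $(-1)^{\langle \si;\bb\rangle}$ equals $+1$; and because each $e_i$ is an even idempotent, $e_i=e_ie_i$ places it in $B_\a$ rather than $B_\c$, so every factorial $[\,\cdot\,]^!_\c$ in the formula is $1$. The set $\textup{Spl}(\Triple)$ is in canonical bijection with decompositions $\bla=\bla^1+\bla^2$ (a splitting just distributes the $\la^{(i)}_k$ copies of the triple $(e_i,k,k)$ between the two pieces), and under this bijection $\eta_{\Triple^j}=\eta_{\bla^j}$, yielding the asserted formula.

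Given the coproduct formula, the character statement follows quickly. Acting on $W_1\otimes W_2$ via $\coproduct$, and using that all $\eta_{\bmu^j}$ are even (so no sign factors appear in the tensor-product action), one gets
\begin{equation*}
\eta_\bmu(W_1\otimes W_2)=\sum_{\bmu^1+\bmu^2=\bmu}\eta_{\bmu^1}W_1\otimes\eta_{\bmu^2}W_2.
\end{equation*}
Pairwise orthogonality of the $\eta_{\bmu^j}$ on each factor makes this sum direct inside $W_1\otimes W_2$, so passing to $\k$-dimensions and substituting into the definition of $\ch$ yields, after regrouping the double sum indexed by pairs $(\bmu^1,\bmu^2)$,
\begin{equation*}
\ch(W_1\otimes W_2)=\sum_{\bmu^1,\bmu^2}\dim(\eta_{\bmu^1}W_1)\dim(\eta_{\bmu^2}W_2)\,z^{\bmu^1}z^{\bmu^2}=\ch(W_1)\,\ch(W_2),
\end{equation*}
since $z^{\bmu^1}z^{\bmu^2}=z^{\bmu^1+\bmu^2}$ componentwise over $I$.

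The only genuinely nontrivial point is the coproduct identity, and within that the verification that the sign and $B_\c$-factorial corrections in Lemma~\ref{coprodeta} both collapse to $1$ on the basis element $\eta_\bla$. This is precisely what being a conforming heredity datum with $e_i\in B_\a$ buys us; in the purely even case these subtleties vanish and the entire argument compresses to a few lines.
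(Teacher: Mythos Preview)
Your argument is correct. The paper itself does not prove this lemma at all; it merely cites \cite[Lemma 5.10]{KMgreen2}, so there is nothing to compare against beyond noting that your approach---deriving the grouplike coproduct formula $\coproduct(\eta_\bla)=\sum_{\bla^1+\bla^2=\bla}\eta_{\bla^1}\otimes\eta_{\bla^2}$ from Lemmas~\ref{LSingleSep} and~\ref{coprodeta} and then reading off the character identity---is the natural one and presumably matches what the cited reference does.

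One small inaccuracy in your closing paragraph: the fact that $e_i\in B_\a$ (hence all $[\,\cdot\,]^!_\c$ factors collapse to $1$) does not actually use the conforming hypothesis. It follows directly from $e_i=e_i\cdot e_i$ with $e_i\in X(i)_\0\cap Y(i)_\0$, so $e_i\in B_\a$ by the definition of $B_\a$. The conforming assumption (that $B_\a$ spans a subalgebra) is not invoked here. This does not affect the validity of your proof.
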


The group $\Si_n$ acts on $\La(n)$ on the left via  
\begin{equation}\label{ESiNAct}
\si\la:=(\la_{\si^{-1}1},\dots , \la_{\si^{-1}n}).
\end{equation}
The group $\Si_n^I:= \prod_{i \in I} \Si_n$ acts on \(\La^I(n)\) via 
$
\bsi\bla:=(\si^{(0)}\la^{(0)},\dots , \si^{(l)}\la^{(l)}),
$
for $\bsi=(\si^{(0)},\dots,\si^{(l)})\in\Si_n^I$ and $\bla=(\la^{(0)},\dots,\la^{(l)})\in\La^I(n)$. For $a\in A$ and \(\sigma \in \Si_n\), let \(\xi_\sigma^a:=\sum_{r=1}^n\xi_{\si(r),r}^a\in M_n(A)\). 
For \(\bsi=(\si^{(0)},\dots,\si^{(l)}) \in \Si^I_n\), we set
\begin{equation}\label{EXiSi}
\xi_d(\bsi):=\sum_{d_0+\dots+d_l=d}  (\xi_{\sigma^{(0)}}^{e_0})^{\otimes d_0} * \cdots * (\xi_{\sigma^{(l)}}^{e_l})^{\otimes d_l}\in T^A(n,d).
\end{equation}

\begin{Lemma}\label{LWeyl} \cite[Lemmas 5.6,\,\,5.7]{KMgreen2}
For all $\bsi, \btau \in \Si^I_n$ and $\bla \in \La^I(n,d)$, we have $\xi_d(\bsi) \xi_d(\btau) = \xi_d(\bsi \btau)$ and $\xi_d(\bsi)\eta_{\bla}\xi_d(\bsi^{-1}) = \eta_{\bsi \bla}$.
\end{Lemma}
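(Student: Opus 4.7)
The plan is to deduce everything from two basic facts: the way matrix units multiply in $M_n(A)$ and the way the star product interacts with componentwise multiplication in $M_n(A)^{\otimes d}$. The key computation is carried out at the level of $M_n(A)$: from the identity $\xi^a_{p,q}\xi^b_{r,s}=\delta_{q,r}\xi^{ab}_{p,s}$ and the orthogonality $e_ie_j=\delta_{ij}e_i$, one obtains
\[
\xi_\sigma^{e_i}\,\xi_\tau^{e_j}\;=\;\sum_{r,s=1}^n\xi^{e_i}_{\sigma(r),r}\,\xi^{e_j}_{\tau(s),s}\;=\;\delta_{ij}\sum_{r=1}^n\xi^{e_i}_{\sigma\tau(r),r}\;=\;\delta_{ij}\,\xi_{\sigma\tau}^{e_i}.
\]
Tensoring $d$ copies yields $(\xi_\sigma^{e_i})^{\otimes d}(\xi_\tau^{e_j})^{\otimes d}=\delta_{ij}(\xi_{\sigma\tau}^{e_i})^{\otimes d}$, with no sign because all $\xi^{e_i}_{p,q}$ are even.

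For the first statement I would then expand $\xi_d(\bsi)\xi_d(\btau)$ using the definition~(\ref{EXiSi}) and the fact that, for disjoint-positioned factors, the star product distributes over the algebra multiplication of $M_n(A)^{\otimes d}$ in the sense
\[
\bigl(x_0\ast\cdots\ast x_l\bigr)\bigl(y_0\ast\cdots\ast y_l\bigr)\;=\;(x_0y_0)\ast\cdots\ast (x_ly_l)
\]
whenever the factors $x_i,y_i$ lie in tensor powers of $\xi^{e_i}$-entries of matching lengths $d_i$ (which in turn is a consequence of the multiplication rule from Proposition~\ref{CPR}, or directly from Lemma~\ref{LSingleSep} after checking that entries with different idempotent colors $e_i\neq e_j$ are disjoint in the $(b,r,s)$-sense). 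Because $\xi^{e_i}$-terms are killed by $\xi^{e_j}$-terms when $i\neq j$, only the summand with matching $(d_0,\dots,d_l)$ survives, and on that summand the star-compatibility above combined with the single-factor identity yields exactly $\xi_d(\bsi\btau)$.

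For the second statement I would decompose $\eta_\bla=\eta^{e_0}_{\la^{(0)}}\ast\cdots\ast\eta^{e_l}_{\la^{(l)}}$, and observe that when multiplying $\xi_d(\bsi)\eta_\bla\xi_d(\bsi^{-1})$ the sum in~(\ref{EXiSi}) collapses to the unique term with $d_i=|\la^{(i)}|$ (all others vanish by color mismatch), so by the star–product compatibility it suffices to prove the one-color identity
\[
\bigl(\xi_{\sigma^{(i)}}^{e_i}\bigr)^{\otimes|\la^{(i)}|}\,\eta^{e_i}_{\la^{(i)}}\,\bigl(\xi_{(\sigma^{(i)})^{-1}}^{e_i}\bigr)^{\otimes|\la^{(i)}|}\;=\;\eta^{e_i}_{\sigma^{(i)}\la^{(i)}}.
\]
Using the left/right multiplication formulas $\xi_\sigma^{e_i}\xi^{e_i}_{p,q}=\xi^{e_i}_{\sigma(p),q}$ and $\xi^{e_i}_{p,q}\xi_{\sigma^{-1}}^{e_i}=\xi^{e_i}_{p,\sigma(q)}$, conjugation by the tensor power sends $\eta^{e_i^{d_i}}_{\bl^{\la^{(i)}}\!,\,\bl^{\la^{(i)}}}$ to $\eta^{e_i^{d_i}}_{\sigma^{(i)}(\bl^{\la^{(i)}}),\,\sigma^{(i)}(\bl^{\la^{(i)}})}$, and since $\sigma^{(i)}(\bl^{\la^{(i)}})$ and $\bl^{\sigma^{(i)}\la^{(i)}}$ lie in the same $\Si_{d_i}$-orbit under the diagonal action, they define the same basis vector of~(\ref{EBasisT}).

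The main obstacle I foresee is keeping careful track of (i) the color-mismatch vanishing that collapses the outer sums in $\xi_d(\bsi)$ and $\xi_d(\btau)$ down to a single compatible summand, and (ii) the precise form of star–product/multiplication compatibility that makes the last display reduce to a single-color identity. Once these two bookkeeping points are settled, both claims follow mechanically from the idempotent multiplication rule in $M_n(A)$.
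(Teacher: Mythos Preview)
The paper does not prove this lemma; it is quoted without proof from \cite[Lemmas 5.6,\,5.7]{KMgreen2}, so there is no in-paper argument to compare against.

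On its own merits your outline is correct. The single-factor identity $\xi_\sigma^{e_i}\xi_\tau^{e_j}=\delta_{ij}\,\xi_{\sigma\tau}^{e_i}$ is right, and the reduction of both statements to single-colour computations via idempotent orthogonality is exactly the natural route. The one place where your justification is loose is the star-product/multiplication compatibility
\[
(x_0\ast\cdots\ast x_l)(y_0\ast\cdots\ast y_l)=(x_0y_0)\ast\cdots\ast(x_ly_l)
\]
for $e_i$-coloured factors $x_i,y_i$ of matching degrees. This is true, but your appeal to Lemma~\ref{LSingleSep} is misplaced (that lemma decomposes a single $\eta$ as a star product; it says nothing about how $\ast$ interacts with the algebra multiplication), and going through Proposition~\ref{CPR} is unnecessarily heavy. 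The clean argument is direct: expand both sides as sums over shuffles $\pi,\rho\in{}^{(d_0,\dots,d_l)}\D$; the term $(x_0\otimes\cdots\otimes x_l)^\pi\cdot(y_0\otimes\cdots\otimes y_l)^\rho$ vanishes unless the colour patterns in each tensor position agree, which forces $\pi=\rho$ because a shuffle is determined by its colour pattern; then use that $\Si_d$ acts on $M_n(A)^{\otimes d}$ by algebra automorphisms to get $(x_0y_0\otimes\cdots\otimes x_ly_l)^\pi$ and re-sum. With this point made precise, both parts of the lemma follow exactly as you describe.
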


\begin{Corollary} \label{CActionOnWtSp}
For $\bsi \in \Si^I_n$, $\bla \in \La^I(n,d)$ and $V\in\mod{T(n,d)}$, we have  $\xi_d(\bsi)\,\eta_\bla V=\eta_{\bsi\bla}V$. 
\end{Corollary}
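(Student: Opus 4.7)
The claim is a weight-space transport result, and it is essentially a formal consequence of Lemma \ref{LWeyl}. My plan is to use the two identities in Lemma \ref{LWeyl} together with the idempotency $\eta_\bla^2 = \eta_\bla$ to derive the element-level identity $\xi_d(\bsi)\,\eta_\bla = \eta_{\bsi\bla}\,\xi_d(\bsi)$ in $T(n,d)$, after which both inclusions of weight spaces follow immediately.

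To get the containment $\xi_d(\bsi)\,\eta_\bla V \subseteq \eta_{\bsi\bla} V$: first, apply the conjugation formula in Lemma \ref{LWeyl} with $\bsi$ replaced by $\bsi^{-1}$ and $\bla$ replaced by $\bsi\bla$ to obtain
\begin{equation*}
\xi_d(\bsi^{-1})\,\eta_{\bsi\bla}\,\xi_d(\bsi) \;=\; \eta_{\bsi^{-1}\bsi\bla} \;=\; \eta_\bla.
\end{equation*}
Then, substituting this expression for one of the two copies of $\eta_\bla$ in $\eta_\bla = \eta_\bla^2$ and using $\xi_d(\bsi)\xi_d(\bsi^{-1}) = \xi_d(\bsi\bsi^{-1})$ where it appears (via Lemma \ref{LWeyl}) together with $\xi_d(\bsi)\,\eta_\bla\,\xi_d(\bsi^{-1}) = \eta_{\bsi\bla}$, I compute
\begin{equation*}
\xi_d(\bsi)\,\eta_\bla \;=\; \xi_d(\bsi)\,\eta_\bla\cdot\xi_d(\bsi^{-1})\,\eta_{\bsi\bla}\,\xi_d(\bsi)\;=\;\eta_{\bsi\bla}\cdot\eta_{\bsi\bla}\,\xi_d(\bsi)\;=\;\eta_{\bsi\bla}\,\xi_d(\bsi).
\end{equation*}
Applying this identity to any $v \in V$ yields $\xi_d(\bsi)\,\eta_\bla V \subseteq \eta_{\bsi\bla} V$.

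For the reverse containment $\eta_{\bsi\bla} V \subseteq \xi_d(\bsi)\,\eta_\bla V$, I simply use the conjugation identity as stated: for any $v\in V$,
\begin{equation*}
\eta_{\bsi\bla}\,v \;=\; \xi_d(\bsi)\,\eta_\bla\,\xi_d(\bsi^{-1})\,v \;\in\; \xi_d(\bsi)\,\eta_\bla V.
\end{equation*}
There is no real obstacle here; everything is a straightforward rewriting. The only subtlety worth flagging is that we cannot shortcut the argument by appealing to $\xi_d(\bsi^{-1})\xi_d(\bsi) = 1_{T(n,d)}$, since Lemma \ref{LWeyl} only gives $\xi_d(\bsi^{-1})\xi_d(\bsi) = \xi_d(\mathbf{1})$, which need not be the identity of $T(n,d)$ when $\sum_{i\in I} e_i \neq 1_A$. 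The use of $\eta_\bla^2 = \eta_\bla$ cleanly sidesteps this point.
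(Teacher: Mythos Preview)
Your proof is correct and is exactly the kind of routine unpacking the paper has in mind; the paper states the corollary without proof, treating it as immediate from Lemma~\ref{LWeyl}. Your observation that $\xi_d(\mathbf{1})$ need not equal $1_{T(n,d)}$ (since $\sum_{i\in I}e_i=1_A$ is not assumed) is well taken, and your use of $\eta_\bla^2=\eta_\bla$ to avoid it is clean; an equivalent shortcut is to note that the conjugation identity at $\bsi=\mathbf{1}$ gives $\xi_d(\mathbf{1})\eta_\bla\xi_d(\mathbf{1})=\eta_\bla$, whence $\eta_\bla\xi_d(\mathbf{1})=\eta_\bla$ and so $\xi_d(\bsi)\eta_\bla=\xi_d(\bsi)\eta_\bla\xi_d(\bsi^{-1})\xi_d(\bsi)=\eta_{\bsi\bla}\xi_d(\bsi)$ directly.
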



\begin{Lemma}\label{LDeWeyl} 
For $\bsi \in \Si^I_n$, we have\, $\coproduct(\xi_d(\bsi)) =\sum_{c=0}^d\xi_{c}(\bsi)\otimes \xi_{d-c}(\bsi)$.
\end{Lemma}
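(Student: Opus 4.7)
The plan is to compute $\coproduct(\xi_d(\bsi))$ by exploiting the fact (Lemma~\ref{LNablaStar}) that $\coproduct$ is an algebra homomorphism with respect to the product $*$. Writing $\zeta_i := \xi_{\si^{(i)}}^{e_i}\in M_n(A_R)$, which is \emph{even} because the standard idempotent $e_i$ is even, we have by definition
$$
\xi_d(\bsi) \;=\; \sum_{d_0+\dots+d_l=d}\zeta_0^{\otimes d_0}*\dots*\zeta_l^{\otimes d_l}.
$$
So I would apply $\coproduct$ termwise, move it across each $*$, and finally reindex by the total degree of the left tensor factor.

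First I would check that for a single even element $\zeta\in M_n(A_R)$ and any $m\geq 0$, the definition of $\coproduct$ given before Lemma~\ref{LNablaStar} yields directly
$$
\coproduct\bigl(\zeta^{\otimes m}\bigr)\;=\;\sum_{c=0}^{m}\zeta^{\otimes c}\otimes \zeta^{\otimes(m-c)}.
$$
Next, using that $\coproduct$ is an algebra homomorphism for $*$, and that all the $\zeta_i$ are even (so no signs appear in the tensor-product $*$-multiplication), I would get
$$
\coproduct\bigl(\zeta_0^{\otimes d_0}*\dots*\zeta_l^{\otimes d_l}\bigr)\;=\;\sum_{c_0=0}^{d_0}\dots\sum_{c_l=0}^{d_l}\bigl(\zeta_0^{\otimes c_0}*\dots*\zeta_l^{\otimes c_l}\bigr)\otimes\bigl(\zeta_0^{\otimes (d_0-c_0)}*\dots*\zeta_l^{\otimes(d_l-c_l)}\bigr).
$$

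Finally I would regroup the resulting sum: setting $c:=c_0+\dots+c_l$, the first tensor factor lies in $T^A(n,c)_R$ and the second in $T^A(n,d-c)_R$; summing $c_i$ with $\sum c_i=c$ and $d_i-c_i$ with $\sum(d_i-c_i)=d-c$ reproduces exactly $\xi_c(\bsi)\otimes\xi_{d-c}(\bsi)$ by the definition~\eqref{EXiSi}, giving the claimed formula. The only genuine subtlety is to confirm that no super-signs are introduced when moving $\coproduct$ through $*$ and when exchanging tensor factors, which is precisely ensured by the evenness of each $\zeta_i$; this is the one bookkeeping point that requires care, but it is not a serious obstacle.
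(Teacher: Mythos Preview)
Your proposal is correct and follows essentially the same approach as the paper: apply $\coproduct$ to each $*$-factor $\zeta_i^{\otimes d_i}$ using the definition, use Lemma~\ref{LNablaStar} to distribute $\coproduct$ over the $*$-product, and then reindex. You are simply more explicit than the paper about the reindexing step and the absence of super-signs (both of which the paper absorbs into ``and the result follows'').
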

\begin{proof}
By definition, 
$$
\coproduct((\xi_{\sigma^{(i)}}^{e_i})^{\otimes d_i})=
\sum_{c_i=0}^{d_i}\big((\xi_{\sigma^{(i)}}^{e_i})^{\otimes c_i}\big)
\otimes \big((\xi_{\sigma^{(i)}}^{e_i})^{\otimes d_i-c_i}\big).
$$
By Lemma~\ref{LNablaStar}, 
we have 
$$
\coproduct(\xi_d(\bsi))=\sum_{d_0+\dots+d_l=d}  \coproduct\big((\xi_{\sigma^{(0)}}^{e_0})^{\otimes d_0}\big) * \cdots * \coproduct\big((\xi_{\sigma^{(l)}}^{e_l})^{\otimes d_l}\big),
$$
and the result follows. 
\end{proof}

For $N\geq n$, we set 
$E^N_n:=\sum_{r=1}^n\xi_{r,r}^1\in M_N(A)$ and consider the idempotent 
\begin{equation}\label{E130818}
\eta^N_n(d):=(E^N_n)^{\otimes d}
\in T^A(N,d). 
\end{equation}

\begin{Lemma} \label{LIdEasy} {\rm \cite[Lemma 5.15]{KMgreen2}} 
Let $N\geq n$. Then we have a  unital superalgebra isomorphism 
$$
T(n,d)\iso \eta^{N}_n(d)T(N,d)\eta^{N}_n(d),\ \eta^\bb_{\br,\bs}\mapsto \eta^\bb_{\br,\bs}$$
\end{Lemma}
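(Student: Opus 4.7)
\medskip

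\noindent\textbf{Proof plan.}
The strategy is to identify the corner algebra $\eta^{N}_n(d)\, T(N,d)\, \eta^{N}_n(d)$ explicitly on the level of the basis~(\ref{EBasisT}) of $T(N,d)$, and then verify that the obvious bijection with the basis of $T(n,d)$ is compatible with the multiplication given by Proposition~\ref{CPR}. The work is essentially bookkeeping; no new combinatorics is required.

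First I would compute the effect of left and right multiplication by $E^N_n=\sum_{r=1}^{n}\xi^{1}_{r,r}$ on an arbitrary matrix unit $\xi^{a}_{r,s}\in M_N(A_R)$. By the standard matrix-unit relations, $E^N_n\,\xi^{a}_{r,s}=\xi^{a}_{r,s}$ if $r\in[n]$ and $0$ otherwise, and symmetrically $\xi^{a}_{r,s}\,E^N_n=\xi^{a}_{r,s}$ iff $s\in[n]$. Tensoring $d$ copies, $\eta^{N}_n(d)=(E^N_n)^{\otimes d}$ acts on a pure tensor $\xi^{c_1}_{t_1,u_1}\otimes\cdots\otimes \xi^{c_d}_{t_d,u_d}$ on the left as the identity if every $t_k\in[n]$ and as zero otherwise, and similarly on the right with the $u_k$.

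Next I would apply this to $\eta^{\bb}_{\br,\bs}\in T(N,d)$, which is a signed sum over tuples $(\bc,\bt,\bu)\sim(\bb,\br,\bs)$ of pure tensors. The $\Si_d$-action preserves multisets of entries, so all summands have their row-indices (resp.\ column-indices) forming a rearrangement of $\br$ (resp.\ $\bs$). Thus
\[
\eta^{N}_n(d)\,\eta^{\bb}_{\br,\bs}\,\eta^{N}_n(d)=
\begin{cases}\eta^{\bb}_{\br,\bs},&\text{if } (\bb,\br,\bs)\in\Seq^B(n,d),\\ 0,&\text{otherwise.}\end{cases}
\]
Summed over the basis~(\ref{EBasisT}) of $T(N,d)$, this shows that $\eta^{N}_n(d)\,T(N,d)\,\eta^{N}_n(d)$ is freely spanned by $\{\eta^{\bb}_{\br,\bs}\mid(\bb,\br,\bs)\in\Seq^B(n,d)/\Si_d\}$. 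Hence the assignment $\eta^{\bb}_{\br,\bs}\mapsto\eta^{\bb}_{\br,\bs}$ defines a well-defined $\k$-linear bijection $\phi:T(n,d)\to\eta^{N}_n(d)\,T(N,d)\,\eta^{N}_n(d)$.

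Finally, I would verify that $\phi$ respects multiplication and units. Comparing the product formula of Proposition~\ref{CPR} as applied in $T(n,d)$ versus in $T(N,d)$, the only a priori discrepancy is the range of the intermediate summation variable $\bt$. However, the conditions $(\ba',\br,\bt)\sim(\ba,\bp,\bq)$ and $(\bc',\bt,\bs)\sim(\bc,\bu,\bv)$ force the entries of $\bt$ to be a rearrangement of the entries of $\bq$ (equivalently, of $\bu$), and these already lie in $[n]$ by hypothesis. Therefore the two sums (and all the sign and structure-constant data $\ka^{\bb}_{\ba',\bc'}$, $\lan\cdot\ran$, and $[\cdot]^!_{\c}$) coincide, so $\phi$ is an algebra homomorphism. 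Unitality of $\phi$ follows because $1_{T(n,d)}=(E^n_n)^{\otimes d}$ maps to $(E^N_n)^{\otimes d}=\eta^{N}_n(d)$, which is the unit of the corner algebra. The only step with any real content is the compatibility of the intermediate sums in Step~4; the rest is straightforward because the $\Si_d$-action on triples preserves the index multisets and the constants $[\bb,\br,\bs]^!_{\c}$ depend only on $(\bb,\br,\bs)$, not on the ambient $n$ or $N$.
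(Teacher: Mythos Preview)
The paper does not prove this lemma; it is quoted verbatim from \cite[Lemma~5.15]{KMgreen2} with no argument given. Your direct proof is correct and is essentially the natural one: identify the corner by its action on the $\eta$-basis, and check that Proposition~\ref{CPR} gives identical structure constants in $T(n,d)$ and in $T(N,d)$ because the auxiliary tuples $\br,\bs,\bt$ are forced (via the $\sim$-conditions) to have all entries in $[n]$.

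One small point of presentation: your unitality sentence is slightly glib, since $\phi$ is defined on $\eta$-basis elements and $1_{T(n,d)}=(E^n_n)^{\otimes d}$ is not a single such element. The cleanest fix is to observe that $\phi$ is precisely the restriction to $T(n,d)$ of the non-unital corner embedding $M_n(A_R)^{\otimes d}\hookrightarrow M_N(A_R)^{\otimes d}$ induced by $\xi^{a}_{r,s}\mapsto\xi^{a}_{r,s}$; this visibly carries $(E^n_n)^{\otimes d}$ to $(E^N_n)^{\otimes d}=\eta^N_n(d)$. Alternatively, since you have already shown $\phi$ is a bijective ring homomorphism, unitality is automatic.
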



\begin{Lemma}
\label{LTensTrunc}
{\rm \cite[Proposition 5.19]{KMgreen2}}
Let $d_1,d_2\in\Z_{\geq 0}$ with $d_1+d_2=d$, 
$n\leq N$,  $V_1\in\mod{T(N,d_1)}$ and $V_2\in\mod{T(N,d_2)}$. Then there is a functorial  isomorphism of $T(n,d)$-modules
$
\eta^N_n(d)(V_1\otimes V_2)\simeq (\eta^N_n(d_1) V_1)\otimes  (\eta^N_n(d_2)V_2).
$
\end{Lemma}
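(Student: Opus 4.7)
The plan is to start from an explicit computation of $\coproduct(\eta^N_n(d))$ and read off both the vector-space isomorphism and its $T(n,d)$-equivariance. Since $\eta^N_n(d)=(E^N_n)^{\otimes d}$ is a pure tensor in $M_N(A)^{\otimes d}$, the defining formula for $\coproduct$ in Section~\ref{SSDefT} gives directly
$$\coproduct(\eta^N_n(d))=\sum_{c=0}^{d}(E^N_n)^{\otimes c}\otimes(E^N_n)^{\otimes(d-c)}=\sum_{c=0}^{d}\eta^N_n(c)\otimes\eta^N_n(d-c).$$

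Applying this to $v_1\otimes v_2$ with $v_j\in V_j$, only the term $c=d_1$ contributes (since $V_j$ is a $T(N,d_j)$-module), and no signs appear because $\eta^N_n(\cdot)$ is even. Hence
$$\eta^N_n(d)(v_1\otimes v_2)=\eta^N_n(d_1)v_1\otimes\eta^N_n(d_2)v_2,$$
so that $\eta^N_n(d)(V_1\otimes V_2)$ coincides, as a subspace of $V_1\otimes V_2$, with $\eta^N_n(d_1)V_1\otimes\eta^N_n(d_2)V_2$. The identification is natural in $V_1$ and $V_2$, giving the required functorial isomorphism on the level of vector spaces.

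For $T(n,d)$-equivariance, identify $T(n,d)$ with $\eta^N_n(d)T(N,d)\eta^N_n(d)$ via Lemma~\ref{LIdEasy}, so that $\theta=\eta^\bb_{\br,\bs}\in T(n,d)$ (with $\br,\bs\in[n]^d$) is also viewed as an element of $T(N,d)$. The action of $\theta$ on $V_1\otimes V_2$ is then given by $\coproduct(\theta)$, which Lemma~\ref{coprodeta} expands as a sum of terms $\eta_{\Triple^1}\otimes\eta_{\Triple^2}$ over splittings $(\Triple^1,\Triple^2)\in\textup{Spl}(\Triple)$. Since splitting merely permutes letters of $\Triple$, the row and column indices of each $\Triple^j$ still lie in $[n]$, so by Lemma~\ref{LIdEasy} each $\eta_{\Triple^j}$ sits in $T(n,d_j)\subseteq T(N,d_j)$. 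Therefore $\coproduct_{T(N)}(\theta)$ equals $\coproduct_{T(n)}(\theta)$ as an element of $T(n,d_1)\otimes T(n,d_2)$, and the two $T(n,d)$-actions on $\eta^N_n(d_1)V_1\otimes\eta^N_n(d_2)V_2$ agree, finishing the proof. The one subtlety—really the only step to watch—is precisely this match between the coproducts in $T(n)$ and in $T(N)$, but the invariance of the splitting construction under alphabet restriction makes it immediate.
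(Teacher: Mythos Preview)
The paper does not supply its own proof of this lemma; it is stated with a citation to \cite[Proposition 5.19]{KMgreen2} and no argument is given in the present paper. So there is no in-paper proof to compare against.

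Evaluated on its own merits, your argument is correct. The computation of $\coproduct(\eta^N_n(d))$ from the pure-tensor definition of the coproduct is straightforward, and since the $T(N,d)$-action on $V_1\otimes V_2$ uses only the $(d_1,d_2)$-component of $\coproduct$, the identity $\eta^N_n(d)(v_1\otimes v_2)=\eta^N_n(d_1)v_1\otimes\eta^N_n(d_2)v_2$ follows. Your equivariance argument is also sound: the splitting formula of Lemma~\ref{coprodeta} is expressed purely in terms of the triples appearing in $\theta$, so when $\theta=\eta^\bb_{\br,\bs}$ has $\br,\bs\in[n]^d$ every $\eta_{\Triple^j}$ in $\coproduct(\theta)$ again has row and column indices in $[n]$, and the identification of Lemma~\ref{LIdEasy} shows the two coproducts agree. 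This is the natural direct proof and is almost certainly what the cited reference does as well.
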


\subsection{Quasi-hereditary structure on $T(n,d)$}
\label{SSQHT}
Recall that $A$ is a based quasi-hereditary superalgebra with conforming heredity data $I,X,Y$. Throughout this subsection, we assume that $d\leq n$. 
Then, by \cite[Theorem 6.6]{KMgreen3}, $T(n,d)=T^A(n,d)$ is a based quasi-hereditary algebra. 

We now describe the heredity data $\La_+^I(n,d),\X(n,d),\Y(n,d)$ for $T(n,d)$ following \cite[\S6]{KMgreen3}. To start with, we have already defined the partially ordered set $\La_+^I(n,d)$ of $I$-multipartitions with partial order $\leq_I$, see \S\ref{SSComb}. For $\bla\in\La_+^I(n,d)$ the corresponding sets $\X(\bla)=\{\X_\Stab\mid \Stab\in\Std^X(\bla)\}$ and $\Y(\bla)=\{\Y_\T\mid \T\in\Std^Y(\bla)\}$ are labeled by the standard $X$-colored and $Y$-colored $\bla$-tableaux,  respectively. Recalling the notation $\bx^\Stab$, $\bl^\Stab$, etc. 
from \S\ref{SSTab}, 
the elements $\X_\Stab$ and $\Y_\T$ are defined as follows. 
$$
\X_\Stab:=\eta^{\bx^\Stab}_{\bl^\Stab,\bl^\bla},\quad
\Y_\T:=\eta^{\by^\T}_{\bl^\bla,\bl^\T}. 
$$ 
For any $\bla\in\La_+^I(n,d)$, we have $\X_{\T^\bla}=\Y_{\T^\bla}=\eta_\bla$, so $\X(\bla)\cap\Y(\bla)=\{\eta_\bla\}$, and $\{\eta_\bla\mid\bla\in \La_+^I(n,d)\}$ are the standard idempotents of the heredity data. 

Let $\bla\in\La_+^I(n,d)$. The standard module $\De(\bla)$ has basis 
\begin{equation}\label{EBasisDe}
\{v_\T:=\X_\T v_\bla\mid \T\in\Std^X(\bla)\},
\end{equation} 
where $v_\bla$ is the (unique up to scalar) vector of weight $\bla$ in $\De(\bla)$. Moreover, if $\T\in \Std^X(\bla,\bmu)$ for some $\bmu\in\La^I(n,d)$, see (\ref{EAl}), then 
\begin{equation}\label{EWtvT}
v_\T\in\eta_{\bmu}\De(\bla).
\end{equation}

Corollary~\ref{CActionOnWtSp} immediately implies:  

\begin{Lemma} \label{LStab}
For $\bsi \in \Si^I_n$ and $\bla \in \La^I_+(n,d)$ 
such that $\bsi\bla=\bla$, we have  $\xi_d(\bsi)v_\bla =\pm v_\bla$. 
\end{Lemma}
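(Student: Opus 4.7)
The plan is to combine Corollary~\ref{CActionOnWtSp}, which tells us how $\xi_d(\bsi)$ moves weight spaces, with the one-dimensionality of $\eta_\bla\De(\bla)$ (asserted in the sentence containing~(\ref{EBasisDe}): $v_\bla$ is the unique up-to-scalar vector of weight $\bla$), and with the multiplicativity of $\bsi\mapsto\xi_d(\bsi)$ from Lemma~\ref{LWeyl}; the final step reduces the general case to involutions.

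First, since $\bsi\bla=\bla$, Corollary~\ref{CActionOnWtSp} yields a \emph{surjection} $\xi_d(\bsi)\colon\eta_\bla\De(\bla)\twoheadrightarrow\eta_{\bsi\bla}\De(\bla)=\eta_\bla\De(\bla)$. Since the target equals $\F v_\bla$ and is one-dimensional, $\xi_d(\bsi)v_\bla=c\,v_\bla$ for some $c\in\F^\times$. Applying the same argument to $\bsi=1$ gives a nonzero scalar $c_0$ with $\xi_d(1)v_\bla=c_0v_\bla$; but $\xi_d(1)^2=\xi_d(1)$ by Lemma~\ref{LWeyl}, so $c_0^2=c_0$, forcing $c_0=1$. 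In particular, $\xi_d(1)v_\bla=v_\bla$.

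To see $c=\pm1$, I would factor $\bsi$ into involutions inside $\operatorname{Stab}_{\Si_n^I}(\bla)$. Since $\operatorname{Stab}_{\Si_n^I}(\bla)=\prod_{i\in I}\operatorname{Stab}_{\Si_n}(\la^{(i)})$ is a product of Young subgroups, it is generated by the adjacent transpositions $\btau$ (applied in a single $i$-coordinate) swapping positions $r,r{+}1$ with $\la^{(i)}_r=\la^{(i)}_{r+1}$. Writing $\bsi=\btau_1\cdots\btau_k$ with each $\btau_j\in\operatorname{Stab}_{\Si_n^I}(\bla)$ of order two, the first step produces $\xi_d(\btau_j)v_\bla=c_jv_\bla$ with $c_j\in\F^\times$, and Lemma~\ref{LWeyl} then gives
$$c_j^2v_\bla=\xi_d(\btau_j)^2v_\bla=\xi_d(\btau_j^2)v_\bla=\xi_d(1)v_\bla=v_\bla,$$
so $c_j=\pm1$. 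Another application of Lemma~\ref{LWeyl} yields $\xi_d(\bsi)=\xi_d(\btau_1)\cdots\xi_d(\btau_k)$, so $\xi_d(\bsi)v_\bla=c_1\cdots c_k\,v_\bla=\pm v_\bla$.

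The only point worth flagging as a ``main obstacle'' is that one must record the nonvanishing of $c_0$ (and of each $c_j$) \emph{before} invoking the idempotency $\xi_d(1)=\xi_d(1)^2$; otherwise $c_0=0$ would be a formal possibility. This nonvanishing is precisely what the surjectivity in Corollary~\ref{CActionOnWtSp} buys us. The remaining ingredients---the description of $\operatorname{Stab}_{\Si_n}(\la^{(i)})$ as a Young subgroup generated by adjacent transpositions, and the multiplicativity of $\bsi\mapsto\xi_d(\bsi)$---are standard.
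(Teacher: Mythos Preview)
Your proof is correct and follows the paper's approach: the paper asserts that the lemma is an immediate consequence of Corollary~\ref{CActionOnWtSp}, and you have supplied the details it omits---namely, the one-dimensionality of $\eta_\bla\De(\bla)$, the multiplicativity $\xi_d(\bsi\btau)=\xi_d(\bsi)\xi_d(\btau)$ from Lemma~\ref{LWeyl}, and the reduction to simple reflections in the Young subgroup $\operatorname{Stab}_{\Si_n^I}(\bla)$ to force the scalar to satisfy $c^2=1$. This is exactly the argument one expects behind the word ``immediately.''
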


If follows from \cite[Theorem 6.6]{KMgreen3} that the formal character of the standard module $\De(\bla)$ is of the form
\begin{equation}\label{ETopTerm}
\ch \De(\bla)=z^\bla+\sum_{\bmu<_I\bla}c_\bmu z^\bmu.
\end{equation}
This implies:

\begin{Lemma} \label{LChInd} 
The formal characters $\{\ch\De(\bla)\mid \bla\in\La_+^I(n,d)\}$ are linearly independent. In particular:
\begin{enumerate}
	\item[{\rm (i)}] if $V\in\mod{T(n,d)}$ has a standard filtration and $\ch V=\sum_{\bla\in\La^I(n,d)}m_\bla\ch\De(\bla)$ then every $\De(\bla)$ appears as a subquotient of the filtration exactly $m_\bla$ times. 
	\item[{\rm (ii)}] if $\bla\in\La_+^I(n,d)$, $\bmu\in\La_+^I(n,c)$ and $\De(\bla)\otimes\De(\bmu)$ has a standard filtration, then $\De(\bla+\bmu)$ appears in this filtration once and all other subquotients $\De(\bnu)$ of the filtration satisfy $\bnu<_I\bla+\bnu$. 
\end{enumerate} 
\end{Lemma}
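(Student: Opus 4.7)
The plan is to establish linear independence of the characters
$\{\ch\De(\bla)\mid\bla\in\La_+^I(n,d)\}$ directly from~(\ref{ETopTerm})
and derive (i) as an immediate consequence, then prove (ii) by analyzing
the leading term of $\ch\De(\bla)\cdot\ch\De(\bmu)$ and running a
downward induction. For linear independence: in any supposed nontrivial
relation $\sum_\bla a_\bla\ch\De(\bla)=0$, pick $\bla_0$ maximal in
$\leq_I$ with $a_{\bla_0}\neq 0$; by~(\ref{ETopTerm}) only characters
$\ch\De(\bla)$ with $\bla\geq_I\bla_0$ can contribute to the coefficient
of $z^{\bla_0}$, and by maximality only $\bla=\bla_0$ does, contributing
$a_{\bla_0}$ and forcing $a_{\bla_0}=0$, a contradiction. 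Part~(i) then
follows at once: a standard filtration of $V$ gives
$\ch V=\sum_\bla(V:\De(\bla))\,\ch\De(\bla)$ by additivity of $\ch$ on
short exact sequences, and linear independence pins down
$m_\bla=(V:\De(\bla))$.

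For (ii), Lemma~\ref{LChProd} gives
$\ch(\De(\bla)\otimes\De(\bmu))=\ch\De(\bla)\cdot\ch\De(\bmu)$, and
expanding each factor via~(\ref{ETopTerm}), every monomial in the
product has the form $z^{\bnu_1+\bnu_2}$ with $\bnu_1\leq_I\bla$ and
$\bnu_2\leq_I\bmu$. The key combinatorial claim is that whenever at
least one of these inequalities is strict, $\bnu_1+\bnu_2<_I\bla+\bmu$.
This requires a case split matching the two-part definition of $\leq_I$:
if strictness appears at the level of $\unlhd_I$ on the norm vectors
$\|\cdot\|\in\Z_{\geq 0}^I$, addition is monotone in $\unlhd_I$ and
strictness is preserved; otherwise $\|\bnu_1\|=\|\bla\|$,
$\|\bnu_2\|=\|\bmu\|$, and strictness occurs at the componentwise
dominance level, where $\nu_1^{(i)}\lhd\la^{(i)}$ means some partial
sum is strictly smaller, a strictness preserved by termwise addition
of $\mu^{(i)}$. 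This gives
$\ch\De(\bla)\cdot\ch\De(\bmu)=z^{\bla+\bmu}+\sum_{\bsi<_I\bla+\bmu}
d_\bsi z^\bsi$, with leading coefficient $1$ because the only pair
$(\bnu_1,\bnu_2)$ producing $z^{\bla+\bmu}$ is $(\bla,\bmu)$.

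Using part~(i), write
$\ch(\De(\bla)\otimes\De(\bmu))=\sum_\bnu m_\bnu\ch\De(\bnu)$ with
$m_\bnu=(\De(\bla)\otimes\De(\bmu):\De(\bnu))$, and extract the
multiplicities by downward induction on $\bnu$ in $\leq_I$. Suppose for
contradiction that some $\bnu\not\leq_I\bla+\bmu$ has $m_\bnu\neq 0$,
and choose such $\bnu$ maximal in $\leq_I$. By maximality, any
$\bnu'>_I\bnu$ with $m_{\bnu'}\neq 0$ must satisfy
$\bnu'\leq_I\bla+\bmu$; but then $\bnu<_I\bnu'\leq_I\bla+\bmu$
contradicts $\bnu\not\leq_I\bla+\bmu$, so no such $\bnu'$ exists.
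Hence the coefficient of $z^\bnu$ on the right collapses to $m_\bnu$,
while on the left it is zero (since $\bnu\not\leq_I\bla+\bmu$), forcing
$m_\bnu=0$. Matching the coefficient of $z^{\bla+\bmu}$ then yields
$m_{\bla+\bmu}=1$, and every other $\bnu$ with $m_\bnu\neq 0$ satisfies
$\bnu<_I\bla+\bmu$. The main obstacle I anticipate is the two-part
combinatorial check that strict $<_I$ is preserved under addition;
everything else reduces to triangular-basis linear algebra.
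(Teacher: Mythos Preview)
Your proof is correct and is exactly the triangularity argument the paper has in mind: the paper offers no detailed proof here, simply writing ``This implies:'' after~(\ref{ETopTerm}), and you have filled in precisely the standard unitriangular-basis argument that this remark points to, together with the appropriate use of Lemma~\ref{LChProd} and the verification that $\leq_I$ is compatible with addition. Your case analysis for the latter (norm level versus componentwise dominance level) is accurate; no further ingredients are needed.
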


\begin{Lemma} \label{LYRaising} 
Let $\bla\in\La^I_+(n,d)$, $\br,\bs\in[n]^d$ and $y_1,\dots,y_d\in Y$ with at least one $y_r\not\in X$. 
Suppose that $v\in\eta_\bnu\De(\bla)$ for some $\bnu\in\La^I(n,d)$ with $\|\bnu\|=\|\bla\|$. Then $\eta^{y_1\cdots y_d}_{\br,\bs}v=0$.
\end{Lemma}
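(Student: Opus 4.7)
The plan is to reduce to showing $\eta^{y_1\cdots y_d}_{\br,\bs}v_\T=0$ for each $\T\in\Std^X(\bla,\bnu)$, since by \eqref{EBasisDe} and \eqref{EWtvT} such vectors span the weight space $\eta_\bnu\De(\bla)$. The crucial first step is to unpack the hypothesis $\|\bnu\|=\|\bla\|$: by a downward induction on $i\in I$, combining Definition~\ref{DCC}(c) with Lemma~\ref{idemactionNew}(ii), one shows that for $\T\in\Std^X(\bla,\bnu)$ the sequence $\bx^\T$ consists entirely of standard idempotents, with $x_c=e_i$ whenever $c$ is a node of $T^{(i)}$. Indeed, for $c\in T^{(i)}$ the index $i_c$ satisfying $e_{i_c}x_c=x_c$ must have $i_c\leq i$, and the level-by-level equality $|\alpha^{(i)}(\T)|=|\la^{(i)}|$ forces $i_c=i$, which by Definition~\ref{DCC}(c) forces $x_c=e_i$.

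Next, I write $v_\T=\X_\T v_\bla=\eta^{\bx^\T}_{\bl^\T,\bl^\bla}v_\bla$ and expand the product $\eta^{y_1\cdots y_d}_{\br,\bs}\cdot\eta^{\bx^\T}_{\bl^\T,\bl^\bla}$ using Proposition~\ref{CPR}. Each term is indexed by permuted sequences $\by',\bx'$, with coefficient involving $\kappa^\bb_{\by',\bx'}=\prod_k \kappa^{b_k}_{y_k',e_{j_k}}$, where $\bx'=e_{j_1}\cdots e_{j_d}$ is a permutation of the idempotent sequence $\bx^\T$. Since every $x_k'$ is a standard idempotent, each factor $y_k'\cdot e_{j_k}$ is zero or equals $y_k'$ itself, and the latter occurs only when $y_k'=e_{j_k}$ (so $y_k'\in X$), or when $y_k'\in Y(i_y)\setminus X$ with $j_k<i_y$ and the ``or'' clause of Definition~\ref{DCC}(c) yields $y_k'$.

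The heart of the argument is a downward induction on blocks, showing that no global matching produces a nonzero product once some $y_r\notin X$. At the top block $l$, the $|\la^{(l)}|$ copies of $e_l$ appearing in $\bx'$ can pair only with entries $y_k'=e_l$ of $\by'$: indeed, $e_j\cdot e_l=\delta_{j,l}e_l$ by Lemma~\ref{idemactionNew}(i), $y\cdot e_l=0$ for any $y\in Y(l)\setminus\{e_l\}$ by Definition~\ref{DCC}(c), and $y\cdot e_l=0$ for $y\in Y(i)$ with $i<l$ by Lemma~\ref{idemactionNew}(ii). This forces all block-$l$ entries of $\by$ to be $e_l\in X$. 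With no non-$X$ entries remaining at block $l$ to absorb a lower idempotent, the same reasoning at block $l-1$ forces all block-$(l-1)$ entries of $\by$ to be $e_{l-1}\in X$, and so on. Iterating through $I$ forces every $y_r\in X$, contradicting the hypothesis. Hence every term in the expansion vanishes and $\eta^{y_1\cdots y_d}_{\br,\bs}\cdot\eta^{\bx^\T}_{\bl^\T,\bl^\bla}=0$ in $T(n,d)$, yielding the lemma. The main obstacle will be handling the interaction between the two independent permutations arising in Proposition~\ref{CPR} so that the block-counting argument rules out every contributing matching, not just one canonical one.
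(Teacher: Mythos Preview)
Your first step---showing that for $\T\in\Std^X(\bla,\bnu)$ with $\|\bnu\|=\|\bla\|$ the sequence $\bx^\T$ consists entirely of standard idempotents---is correct and nicely argued. However, the central claim of your proof, that $\eta^{y_1\cdots y_d}_{\br,\bs}\cdot\X_\T=0$ \emph{in $T(n,d)$}, is false in general. Take $I=\{0,1\}$ with $0<1$, and let $A$ be the path algebra of the quiver $0\leftarrow 1$, so that $X(0)=Y(0)=\{e_0\}$, $X(1)=\{e_1\}$, $Y(1)=\{e_1,y\}$ with $e_1y=y=ye_0$. Let $\bla=\bnu=((2),\varnothing)$, so $\X_{\T^\bla}=\eta_\bla=\eta^{e_0,e_0}_{(1,1),(1,1)}$. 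With $\by=(y,e_0)$ and $\br=\bs=(1,1)$ one has $\eta^{y,e_0}_{(1,1),(1,1)}\cdot\eta_\bla=\eta^{y,e_0}_{(1,1),(1,1)}\neq 0$, since $ye_0=y$ and $e_0e_0=e_0$. Thus the product does not vanish in $T(n,d)$; it only vanishes after applying to $v_\bla$, because the result lands in a weight space $\eta_\bmu\De(\bla)$ with $\bmu>_I\bla$.

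The gap in your block induction is the step ``This forces all block-$l$ entries of $\by$ to be $e_l$.'' What you actually proved is that the $|\la^{(l)}|$ positions $k$ with $x'_k=e_l$ must have $y'_k=e_l$; this says nothing about the remaining entries of $\by$ lying in $Y(l)$. In the counterexample $|\la^{(1)}|=0$, so your top-block step is vacuous, yet $y\in Y(1)\setminus X$ survives. No amount of care with the two permutations in Proposition~\ref{CPR} will rescue this, because the conclusion you are aiming for is simply too strong. The paper's argument is quite different and avoids Proposition~\ref{CPR} entirely: assuming $\eta^{\by}_{\br,\bs}v\neq 0$, it reads off the right weight (forcing $\sharp\{k\mid y_ke_i=y_k\}=|\la^{(i)}|$ for all $i$) and the left weight (governed by the indices $j_k$ with $e_{j_k}y_k=y_k$), observes $i_k\leq j_k$ with at least one strict inequality since some $y_r\notin X$, and concludes that the result lies in $\eta_\bmu\De(\bla)$ with $\|\bmu\|\rhd_I\|\bla\|$, contradicting~(\ref{ETopTerm}). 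The essential ingredient you are missing is precisely this appeal to the weight bound on $\De(\bla)$.
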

\begin{proof}
Suppose $\eta^{y_1\cdots y_d}_{\br,\bs}v\neq 0$. Then $\eta^{y_1\cdots y_d}_{\br,\bs}\eta_\bnu=\eta^{y_1\cdots y_d}_{\br,\bs}$. So there exist $i_1,\dots,i_d\in I$ such that $y_1e_{i_1}=y_1,\,\dots,\,y_de_{i_d}=y_d$ and for all $i\in I$ we have $\sharp\{k\mid i_k=i\}=|\nu^{(i)}|=|\la^{(i)}|$. On the other hand, there exist $j_1,\dots,j_d$ such that $e_{j_1}y_1=y_1,\,\dots,\,e_{j_d}y_d=y_d$. By Lemma~\ref{idemactionNew}, $j_1\geq i_1,\dots,j_d\geq i_d$, and by the assumption that at least one $y_r\not\in X$, we have that at least one $j_r>i_r$. So $\eta^{y_1\cdots y_d}_{\br,\bs}v \in \eta_\bmu \De(\la)$ for $\bmu$ satisfying $\|\bmu\|\rhd_I\|\bla\|$, hence $\bmu>_I\bla$, which contradicts (\ref{ETopTerm}). 
\end{proof}

Recalling (\ref{EIota}), we have 

\begin{Lemma} \label{LDeCol} {\rm \cite[Theorem 6.17(i)]{KMgreen3}}
Let $\bla=(\la^{(i)})_{i\in I}\in\La_+^I(n,d)$. Then $$\De(\bla)\simeq\bigotimes_{i\in I}\De(\biota_i(\la^{(i)})).$$ 
\end{Lemma}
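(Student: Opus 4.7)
The plan is to realize $V := \bigotimes_{i \in I} \De(\biota_i(\la^{(i)}))$ as a highest weight $T^A(n,d)$-module of weight $\bla$ and invoke Proposition~\ref{PUn}, then upgrade the resulting surjection to an isomorphism by a dimension count.

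Set $v := \bigotimes_{i \in I} v_{\biota_i(\la^{(i)})} \in V$. By Lemma~\ref{LChProd},
$$\ch V \;=\; \prod_{i \in I} \ch \De(\biota_i(\la^{(i)})).$$
Each factor has top term $z^{\biota_i(\la^{(i)})}$ by (\ref{ETopTerm}). Multiplying and analyzing the partial order $\leq_I$ on a sum $\bnu = \bnu_0 + \cdots + \bnu_l$ with each $\bnu_i \leq_I \biota_i(\la^{(i)})$, one observes that $\bnu_i$ has trivial components in colors $> i$, forcing $\sum_{t \geq s} |\nu^{(t)}| \leq \sum_{i \geq s} |\la^{(i)}|$ with equality throughout forcing $\bnu_i = \biota_i(\la^{(i)})$ for every $i$. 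Hence the top monomial of $\ch V$ is $z^\bla$ with coefficient $1$, every other monomial $z^\bnu$ satisfies $\bnu <_I \bla$, we have $\eta_\bla V = \lan v \ran$ and $\eta_\bnu V = 0$ for $\bnu >_I \bla$, and
$$\dim V \;=\; \prod_i |\Std^{X(i)}(\la^{(i)})| \;=\; |\Std^X(\bla)| \;=\; \dim \De(\bla).$$

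Next I verify the hypotheses of Lemma~\ref{LYCrit} for $v$. The identity $\eta_\bla v = v$ follows from expanding $\eta_\bla = \eta_{\la^{(0)}}^{e_0} * \cdots * \eta_{\la^{(l)}}^{e_l}$ and applying Lemma~\ref{coprodeta}: only the splitting that sends the $i$-th star factor entirely into the $i$-th tensor slot of $V$ contributes, because any other splitting would produce an idempotent of color $j \neq i$ acting on some $v_{\biota_i(\la^{(i)})}$, which vanishes by the weight constraint on that factor. For $\Y_\T \in \Y(\bmu)$ with $(\bmu, \T) \neq (\bla, \T^\bla)$: if $\bmu \neq \bla$ then $\Y_\T v \in \eta_\bmu V = 0$ by the weight analysis above; if $\bmu = \bla$ but $\T \neq \T^\bla$, then $T^{(i)} \neq T^{\la^{(i)}}$ for some $i$, and the coproduct decomposition of $\Y_\T$ (via Lemma~\ref{coprodeta}) yields a slice acting as a nontrivial $Y$-element on $v_{\biota_i(\la^{(i)})}$, killing it by the highest-weight property of that vector (an internal instance of Lemma~\ref{LYRaising} inside the factor $\De(\biota_i(\la^{(i)}))$).

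Hence $V' := T^A(n,d) v$ is a highest weight module of weight $\bla$, and Proposition~\ref{PUn} yields a surjection $\phi \colon \De(\bla) \twoheadrightarrow V'$. To prove $V' = V$, I would act on $v$ by elements $\X_{T^{(0)}} * \cdots * \X_{T^{(l)}} \in T^A(n,d)$ for $(T^{(0)}, \dots, T^{(l)}) \in \Std^{X(0)}(\la^{(0)}) \times \cdots \times \Std^{X(l)}(\la^{(l)})$: by Lemma~\ref{LNablaStar} (compatibility of the star product with the coproduct) combined with the color-disjointness of the $\X_{T^{(i)}}$'s, this action produces $v_{T^{(0)}} \otimes \cdots \otimes v_{T^{(l)}}$ modulo terms in strictly lower weight spaces, so a triangular inversion shows every basis vector of $V$ lies in $V'$. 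The dimension count then forces $\phi$ to be an isomorphism. The main obstacle is the $Y$-annihilation step: it requires careful bookkeeping of the terms in Lemma~\ref{coprodeta} and how they interact with the color structure of the factors of $V$, and is the combinatorial heart of the proof.
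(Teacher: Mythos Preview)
The paper does not prove this lemma; it simply imports it as \cite[Theorem~6.17(i)]{KMgreen3}. So there is no in-paper proof to compare against, and your attempt is an independent argument.

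Your overall strategy is sound, and the character/dimension count is correct. However, there is a genuine gap in your $Y$-annihilation step. You write that for $\Y_\T\in\Y(\bmu)$ with $\bmu\neq\bla$ one has $\Y_\T v\in\eta_\bmu V=0$. But the character analysis only gives $\eta_\bmu V=0$ when $\bmu\not\leq_I\bla$; for $\bmu<_I\bla$ the weight space $\eta_\bmu V$ is typically nonzero, so this case is not handled. You would need a separate argument (via right weights and the constraint that only $e_j$'s from $X\cap Y$ can survive when acting on each highest weight vector $v_{\biota_i(\la^{(i)})}$) to force $\|\bmu\|=\|\bla\|$ and then $\bmu=\bla$.

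There is an easier route that sidesteps this entirely. You do not need Lemma~\ref{LYCrit}: once you have (a) $\eta_\bla V=\langle v\rangle$ and $\eta_\bmu V=0$ for $\bmu>_I\bla$ from the character, and (b) $T(n,d)v=V$ from your star-product generation argument, then $V$ is a highest weight module of weight $\bla$ \emph{by definition}. Proposition~\ref{PUn} then gives the surjection $\De(\bla)\twoheadrightarrow V$, and your dimension count finishes. So the ``main obstacle'' you flag is avoidable; the real work is in making the generation step $T(n,d)v=V$ precise (your triangularity sketch via $\X_{T^{(0)}}*\cdots*\X_{T^{(l)}}$ is the right idea, but needs the coproduct/star compatibility spelled out carefully).
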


\subsection{Character formula}
\label{SSChF}
Throughout the subsection we continue to assume that $d\leq n$. 
Set 
$${}_jX(i):=\{x\in X(i)\mid e_jx=x\}\qquad(i,j\in I).$$ 
Note that ${}_j X(i)\neq \varnothing$ only if $j\leq i$. 
For $\bnu=(\bnu^{(x)})_{x\in X(i)}\in\La_+^{X(i)}$ and $j\in I$, we define 
$$
{}_j\bnu=(\bnu^{(x)})_{x\in {}_jX(i)}\in\La_+^{{}_jX(i)}.
$$

Fix $i\in I$ until the end of the subsection. We define an algebra homomorphism 
$$
\chi:\Sym^{X(i)}\to\Sym^I,\ \bigotimes_{x\in X(i)}f_x\mapsto \bigotimes_{j\in I}\prod_{x\in {}_jX(i)}f_x,
$$
cf. \cite[(7.41)]{KMgreen3}. By the Littlewood-Richardson rule, for $\bnu \in \La_+^{X(i)}$, we have 
\begin{equation}\label{EChiLR}
\chi \left( \bs_{\bnu} \right) = \sum_{\bga \in \La_+^I} \prod_{j \in I} c_{{}_j\bnu}^{\ga^{(j)}} \bs_{\bga}. 
\end{equation}

For a multipartition $\bnu\in \La^{X(i)}_+(n,d)$, we define 
its {\em superconjugate}  multipartition 
$$\bnu^\tr:=(\tilde\nu^{(x)})_{x\in X(i)},$$ 
where $\tilde\nu^{(x)}:=\nu^{(x)}$ if $x$ is even and $\tilde\nu^{(x)}$ is the conjugate partition $(\nu^{(x)})'$ if $x$ is odd.
Using \cite[(2.7),(3.8)]{Mac}, we have the algebra homomorphism 
$$
\tr:\Sym^{X(i)}\to \Sym^{X(i)}, s_{\bnu}\mapsto s_{\bnu^\tr}.
$$

Let $t:=|X(i)|$. By choosing a total order on $X(i)$ we will identify 
$\La_+^{X(i)}$ with $\La_+^t$, 
$\Sym^{X(i)}$ with $\Sym^{\otimes t}$, etc. In particular, we have a well-defined map $\tr:\Sym^{\otimes t}\to \Sym^{\otimes t}$. 
Recalling (\ref{ERhoMult}) and (\ref{EIteratedCoproduct}), we now have:

\begin{Theorem} \label{TCharacter} 
Let $d\in\Z_{\geq 0}$, $n\in\Z_{>0}$, with $d\leq n$, and 
$\la\in\La_+(n,d)\subseteq \La_+$ and $i\in I$. Then 
$$
\ch \De(\biota_i(\la))= \rho_n^I\circ \chi\circ \tr\circ {\tt\Delta}^{t-1}(s_\la).
$$
\end{Theorem}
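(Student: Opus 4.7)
The plan is to compute both sides by decomposing each standard $X(i)$-colored $\la$-tableau into one-color skew sub-tableaux, and then matching the resulting generating function to the iterated coproduct of $s_\la$.

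First I would unwind the left hand side. From the basis $\{v_T\mid T\in\Std^{X(i)}(\la)\}$ in (\ref{EBasisDe}) together with the weight statement (\ref{EWtvT}), we have $\ch\De(\biota_i(\la))=\sum_{T\in\Std^{X(i)}(\la)} z^{\balpha(T)}$. A short argument using Lemma~\ref{idemactionNew}(i) shows that for every $x\in X(i)$ there is a unique $k\in I$ with $x\in{}_kX(i)$, so the contribution of a cell of color $x$ and entry $l$ lies in the $k$-th tensor slot of $\Sym^I(n)$.

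Next I would exploit the freedom in the total order on $\Alph_{X(i)}$: both sides of the identity are intrinsic, so I may fix any total order $x_1<\cdots<x_t$ on $X(i)$ and extend it to $\Alph_{X(i)}$ by $l^{x_a}<m^{x_b}$ iff $a<b$, or $a=b$ and $l<m$. The key combinatorial claim is that for each $T\in\Std^{X(i)}(\la)$ and each $j$, the set $\mu^{(j)}$ of cells of $T$ whose colors lie in $\{x_1,\dots,x_j\}$ is a Young subshape of $\la$. This uses the fact that (R), (C) together with the color-grouped order force the colors of $T$ to be weakly increasing along each row and down each column, so $\mu^{(j)}$ is closed under moving up and to the left. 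Consequently $T$ is equivalent to the data of a chain $\varnothing=\mu^{(0)}\subseteq\cdots\subseteq\mu^{(t)}=\la$ together with a filling $T_j$ of the skew shape $\mu^{(j)}/\mu^{(j-1)}$ by entries from $[n]$ for each $j$, and $T_j$ is a semistandard skew tableau if $x_j$ is even and a conjugate-semistandard skew tableau (rows strict, columns weak) if $x_j$ is odd.

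Finally I would match both sides. Using $\sum z^{T_j}=s_{\mu^{(j)}/\mu^{(j-1)}}(z_1,\dots,z_n)$ in the even case and $\sum z^{T_j}=s_{(\mu^{(j)}/\mu^{(j-1)})'}(z_1,\dots,z_n)$ in the odd case (the latter by tableau transposition), the character becomes a sum over the above chains, regrouped into the $I$-tensor slots via the assignment $j\mapsto k$ with $x_j\in{}_kX(i)$. On the other hand, the standard iterated-coproduct identity gives ${\tt\Delta}^{t-1}(s_\la)=\sum_{\mathrm{chains}}\bigotimes_j s_{\mu^{(j)}/\mu^{(j-1)}}$; then $\tr$ extends linearly and acts on odd factors as the Hopf involution $\omega\colon s_{\mu/\nu}\mapsto s_{(\mu/\nu)'}$, the map $\chi$ multiplies the $j$-th factor into the $k$-th tensor slot, and $\rho_n^I$ specializes each slot to $n$ variables. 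The resulting formula is exactly the character above.

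The hard part will be verifying the combinatorial lemma that each $\mu^{(j)}$ is a Young subshape of $\la$; this relies essentially on the color-grouped choice of order on $\Alph_{X(i)}$. The remaining identities are standard manipulations of skew Schur functions and the componentwise involution $\omega$; as a sanity check, specializing to $A=R$ (so $I=\{0\}$ and $t=1$) recovers the classical formula $\ch\De(\la)=s_\la(z_1,\dots,z_n)$.
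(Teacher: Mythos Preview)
Your proposal is correct, and it takes a genuinely different route from the paper's proof.

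The paper's argument is almost entirely a citation: it invokes \cite[Proposition~7.45]{KMgreen3}, which already gives
\[
\ch\De(\biota_i(\la))=\sum_{\bga\in\La^I_+(n)}\ \sum_{\bnu\in\La_+(n)^{t}}
c^{\la}_{\bnu^{\tr}}
\Big(\prod_{j\in I}c^{\ga^{(j)}}_{\,{}_{j}\bnu}\Big)
s_{\bga}(z_1,\dots,z_n),
\]
and then simply unwinds this Littlewood--Richardson expansion as $\rho_n^I\circ\chi\circ\tr\circ{\tt\Delta}^{t-1}(s_\la)$ using (\ref{EChiLR}) and (\ref{EMLR}). In effect, all the combinatorics is hidden in the external reference.

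Your approach bypasses that citation and proves the formula directly from the tableau basis (\ref{EBasisDe}). The key idea---choosing a color-grouped total order on $\Alph_{X(i)}$ so that the color-level sets $\mu^{(j)}$ of a standard tableau form a chain of Young subdiagrams of $\la$---is exactly the combinatorial content that \cite[Proposition~7.45]{KMgreen3} encodes through Littlewood--Richardson coefficients. Your identification of the single-color layers with (conjugate-)semistandard skew tableaux, together with the chain expansion ${\tt\Delta}^{t-1}(s_\la)=\sum_{\varnothing=\mu^{(0)}\subseteq\cdots\subseteq\mu^{(t)}=\la}\bigotimes_j s_{\mu^{(j)}/\mu^{(j-1)}}$ and the fact that $\tr$ is the tensor product of $\omega^{|x_j|}$ (so $\omega(s_{\mu/\nu})=s_{\mu'/\nu'}$ on odd slots), gives a clean self-contained argument. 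The payoff is that your proof is independent of \cite{KMgreen3}; the cost is that you must carry out the tableau-layer bijection and the skew-Schur identities explicitly, whereas the paper offloads all of this. One small point to tighten: your opening step $\ch\De(\biota_i(\la))=\sum_T z^{\balpha(T)}$ implicitly uses that every $x\in X(i)$ satisfies $e_kx=x$ for some $k$, so that $\balpha(T)$ is always defined; make sure this is justified in the setting you work in (it holds, for instance, whenever $\sum_j e_j=1_A$, and is in any case what underlies the cited formula).
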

\begin{proof}
By \cite[Proposition 7.45]{KMgreen3}, we have
\begin{align*}
	\ch\De(\biota_i(\la))&=\sum_{\bga\in\La^I_+(n)}\, \sum_{\bnu\in\La_+(n)^{t}}
	c^{\la}_{\bnu^{\tr}}
	\bigg(\prod_{j\in I}c^{\ga^{(j)}}_{\,{}_{j}\bnu}\bigg)
	s_{\bga}(z_1,\dots,z_n)
	\\
	&=
	\rho_n^I\Bigg(\sum_{\bga\in\La^I_+}\, \sum_{\bnu\in\La_+^{t}}
	c^{\la}_{\bnu^{\tr}}
	\bigg(\prod_{j\in I}c^{\ga^{(j)}}_{\,{}_{j}\bnu}\bigg)
	s_{\bga}\Bigg)
	\\
	&=
	\rho_n^I\circ \chi \Bigg(\sum_{\bnu\in\La_+^{t}}
	c^{\la}_{\bnu^{\tr}}
	s_{\bnu}\Bigg)
	\\
	&=
	\rho_n^I\circ \chi \circ \tr \Bigg(\sum_{\bnu\in\La_+^{t}}
	c^{\la}_{\bnu}
	s_{\bnu}\Bigg)
	\\
	&=
	\rho_n^I\circ \chi \circ \tr \circ {\tt\Delta}^{t-1}(s_\la),
\end{align*}
where we have used (\ref{EChiLR}) for the third equality and (\ref{EMLR}) for the last equality.
\end{proof}

\begin{Theorem} \label{TProdCharacter} 
Let $\la\in\La_+(n,d)$, $\mu\in \La_+(n,e)$ and $i\in I$. If $d+e\leq n$ then 
$$
\ch\big(\De(\biota_i(\la))\otimes\De(\biota_i(\mu))\big)=\sum_{\nu\in\La_+(n,d+e)}c^{\,\nu}_{\la,\mu}\ch\De(\biota_i(\nu)).
$$
\end{Theorem}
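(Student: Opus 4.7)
The plan is to reduce the statement to the classical Littlewood--Richardson identity $s_\la \cdot s_\mu = \sum_{\nu} c^{\,\nu}_{\la,\mu}\, s_\nu$ in $\Sym$ by exploiting the character formula of Theorem~\ref{TCharacter} together with the multiplicativity of $\ch$ under the coproduct $\coproduct$.

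First, I would apply Lemma~\ref{LChProd} to get
\[
\ch\big(\De(\biota_i(\la))\otimes \De(\biota_i(\mu))\big) = \ch\De(\biota_i(\la))\cdot \ch\De(\biota_i(\mu)).
\]
Next, I would invoke Theorem~\ref{TCharacter} to rewrite each factor on the right as the image of a Schur function under the composition $\Phi := \rho_n^I\circ \chi\circ \tr\circ {\tt\Delta}^{t-1}$, so the right-hand side becomes $\Phi(s_\la)\cdot \Phi(s_\mu)$.

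The key observation is that each of the four maps composing $\Phi$ is an algebra homomorphism: ${\tt\Delta}^{t-1}$ is the iterated coproduct of the Hopf algebra $\Sym$ and is a ring map; $\tr$ is induced by applying the involution $\omega$ in the appropriate tensor slots (by \cite[(2.7),(3.8)]{Mac}); $\chi$ is patently an algebra homomorphism from its defining tensor formula; and $\rho_n^I$ is a tensor power of the algebra map $\rho_n\colon \Sym \to \Sym(n)$. Hence $\Phi$ itself is multiplicative, and
\[
\Phi(s_\la)\cdot \Phi(s_\mu) = \Phi(s_\la \cdot s_\mu) = \sum_{\nu\in\La_+}c^{\,\nu}_{\la,\mu}\,\Phi(s_\nu),
\]
using the Littlewood--Richardson expansion of $s_\la s_\mu$. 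Every $\nu$ appearing on the right satisfies $|\nu|=d+e\le n$, so in particular $\nu$ has at most $n$ nonzero parts and lies in $\La_+(n,d+e)$. Applying Theorem~\ref{TCharacter} again, now in the reverse direction, to each $\Phi(s_\nu)=\ch\De(\biota_i(\nu))$ yields the claimed formula.

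There is essentially no hard step here: the whole content of the theorem is transferred via the character formula to the well-known multiplicativity of Schur functions. The only point needing care is the verification that all four maps in $\Phi$ respect products, which is straightforward, and the bookkeeping that the support of $s_\la s_\mu$ lies in $\La_+(n,d+e)$ thanks to the hypothesis $d+e\le n$.
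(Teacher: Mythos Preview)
Your proposal is correct and follows essentially the same argument as the paper's own proof: apply Lemma~\ref{LChProd}, rewrite each factor via Theorem~\ref{TCharacter} as $\Phi(s_\la)$ and $\Phi(s_\mu)$ with $\Phi=\rho_n^I\circ\chi\circ\tr\circ{\tt\Delta}^{t-1}$, use that $\Phi$ is an algebra homomorphism together with the Littlewood--Richardson rule, and then apply Theorem~\ref{TCharacter} in reverse. Your explicit remark that every $\nu$ with $c^{\,\nu}_{\la,\mu}\neq 0$ has at most $d+e\le n$ parts is a small clarification the paper leaves implicit.
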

\begin{proof}
By Lemma~\ref{LChProd}, Theorem~\ref{TCharacter} and the Littlewood-Richardson rule, we have
\begin{align*}
	\ch\big(\De(\biota_i(\la))\otimes\De(\biota_i(\mu))\big)&=\big(\ch\De(\biota_i(\la))\big)\big(\ch\De(\biota_i(\mu))\big)
	\\
	&=\big(\rho_n^I\circ \chi\circ \tr\circ {\tt\Delta}^{t-1}(s_\la)\big)\,
	\big(\rho_n^I\circ \chi\circ \tr\circ {\tt\Delta}^{t-1}(s_\mu)\big)
	\\
	&=\rho_n^I\circ \chi\circ \tr\circ {\tt\Delta}^{t-1}(s_\la s_\mu)
	\\
	&=\rho_n^I\circ \chi\circ \tr\circ {\tt\Delta}^{t-1}\Bigg(\sum_{\nu\in\La_+(n,d+e)}c^{\,\nu}_{\la,\mu}s_\nu\Bigg)
	\\&
	=\sum_{\nu\in\La_+(n,d+e)}c^{\,\nu}_{\la,\mu}\ch\De(\biota_i(\nu)),
\end{align*}
as required.
\end{proof}

For $\bla=(\la^{(j)})_{j\in I}\in\La^I_+(n,d)$, $\bmu=(\mu^{(j)})_{j\in I}\in\La^I_+(n,e)$ and $\bnu=(\nu^{(j)})_{j\in I}\in\La_+^I(n,d+e)$ we define
\begin{equation}\label{EBoldLR}
c^{\,\bnu}_{\bla,\bmu}:=\prod_{j\in I} c^{\,\nu^{(j)}}_{\la^{(j)}\hspace{-.6mm},\mu^{(j)}}.
\end{equation}

\begin{Corollary}\label{DeChTens}
Let $\bla\in\La^I_+(n,d)$ and $\bmu\in\La^I_+(n,e)$. If $d+e\leq n$ then 
$$
\ch(\De(\bla)\otimes\De(\bmu))=\sum_{\bnu\in\La_+^I(n,d+e)}c^{\,\bnu}_{\bla,\bmu}\ch\De(\bnu).
$$
\end{Corollary}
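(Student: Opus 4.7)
The plan is to reduce the multi-index statement to the single-index case (Theorem~\ref{TProdCharacter}) by exploiting the tensor factorization of standard modules (Lemma~\ref{LDeCol}) and the multiplicativity of characters (Lemma~\ref{LChProd}).

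First I would write, by Lemma~\ref{LDeCol},
$$\De(\bla)\simeq\bigotimes_{i\in I}\De(\biota_i(\la^{(i)})),\qquad \De(\bmu)\simeq\bigotimes_{i\in I}\De(\biota_i(\mu^{(i)})).$$
Applying Lemma~\ref{LChProd} repeatedly, the character of the tensor product $\De(\bla)\otimes\De(\bmu)$ is the product in $\Sym^I(n)$ of the characters of all the factors. Since $\Sym^I(n)$ is a commutative ring, we may freely reorder this product so as to pair, for each $i\in I$, the factor $\ch\De(\biota_i(\la^{(i)}))$ with the factor $\ch\De(\biota_i(\mu^{(i)}))$:
$$\ch\bigl(\De(\bla)\otimes\De(\bmu)\bigr)=\prod_{i\in I}\ch\De(\biota_i(\la^{(i)}))\cdot \ch\De(\biota_i(\mu^{(i)})).$$

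Next, for each fixed $i\in I$, I would apply Theorem~\ref{TProdCharacter}. The hypothesis $|\la^{(i)}|+|\mu^{(i)}|\leq d+e\leq n$ is satisfied, so
$$\ch\De(\biota_i(\la^{(i)}))\cdot \ch\De(\biota_i(\mu^{(i)}))=\sum_{\nu^{(i)}\in\La_+(n,|\la^{(i)}|+|\mu^{(i)}|)}c^{\,\nu^{(i)}}_{\la^{(i)},\mu^{(i)}}\,\ch\De(\biota_i(\nu^{(i)})).$$
Substituting back, expanding the product over $i\in I$ (again in the commutative ring $\Sym^I(n)$), and re-collecting the contributions indexed by $\bnu=(\nu^{(i)})_{i\in I}\in\La_+^I(n,d+e)$, we obtain
$$\ch\bigl(\De(\bla)\otimes\De(\bmu)\bigr)=\sum_{\bnu\in\La_+^I(n,d+e)}\Bigl(\prod_{i\in I} c^{\,\nu^{(i)}}_{\la^{(i)},\mu^{(i)}}\Bigr)\prod_{i\in I}\ch\De(\biota_i(\nu^{(i)})).$$

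Finally I would apply Lemma~\ref{LDeCol} in the reverse direction together with Lemma~\ref{LChProd} to identify $\prod_{i\in I}\ch\De(\biota_i(\nu^{(i)}))$ with $\ch\De(\bnu)$, and recognize the coefficient as $c^{\,\bnu}_{\bla,\bmu}$ via definition~\eqref{EBoldLR}. There is no real obstacle here; the argument is a bookkeeping exercise made possible by the fact that all computations take place in the commutative character ring $\Sym^I(n)$, so the supertensor sign subtleties in the actual module-level tensor products do not interfere.
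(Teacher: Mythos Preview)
Your proof is correct and follows exactly the approach the paper intends: the paper's one-line proof (``This follows from Theorem~\ref{TProdCharacter} and Lemma~\ref{LDeCol}'') is precisely the argument you have unpacked, with Lemma~\ref{LChProd} supplying the multiplicativity of characters that makes the reduction work.
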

\begin{proof}
This follows from Theorem~\ref{TProdCharacter} and Lemma~\ref{LDeCol}.  
\end{proof}

\section{Tensor products of standard modules have standard filtrations}
\label{SMain}
We again fix a based quasi-hereditary superalgebra $A_R$ over $R$ with conforming heredity data $I,X,Y$. Recalling the convention (\ref{ENoIndex}), we have the ($\F$-)superalgebra $T(n,d)=T^A(n,d)$. Under the assumption $d\leq n$, 
this superalgebra is based quasi-hereditary with heredity data $\La_+^I(n,d),\X(n,d),\Y(n,d)$, see \S\ref{SSQHT}. 

\subsection{Reduction} We begin to prove Main Theorem by 
reducing to the case of `one color' and `fundamental dominant weights', cf. \cite[(3.5)]{Wang}, \cite[Proposition 3.5.4(i)]{DonkinFilt}. For integer $0\leq c\leq n$, recalling (\ref{EEps}), we have 
$$\om_c:=\eps_1+\dots+\eps_c\in\La_+(n,c).$$

\begin{Proposition} \label{PRed} 
Suppose that for all $n\in\Z_{>0}$, $d,c\in\Z_{\geq 0}$ with $d+c\leq n$, $\la\in\La_+(n,d)$, and $i\in I$, the tensor product $\De(\biota_i(\la))\otimes\De(\biota_i(\om_c))$ has a standard filtration. Then for all $n\in \Z_{>0}$, $d,c\in\Z_{\geq 0}$ with $d+c\leq n$, $\bla\in\La^I_+(n,d)$, $\bmu\in\La_+^I(n,c)$, the tensor product $\De(\bla)\otimes\De(\bmu)$ has a standard filtration. 
\end{Proposition}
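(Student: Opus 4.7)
The plan is a two-stage reduction: first reduce general $\bmu$ to the single-color case $\bmu=\biota_i(\mu)$ for some partition $\mu$ and some $i\in I$, then reduce the single-color case to the fundamental weights $\mu=\om_c$ treated by the hypothesis.

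For \textbf{Stage 1}, Lemma~\ref{LDeCol} gives $\De(\bmu)\simeq\bigotimes_{j\in I}\De(\biota_j(\mu^{(j)}))$, and I would induct on the size of the support $\{j:\mu^{(j)}\neq 0\}$. Picking some $j$ in the support and setting $\bmu':=\bmu-\biota_j(\mu^{(j)})$, one has $\De(\bla)\otimes\De(\bmu)\simeq\De(\bla)\otimes\De(\biota_j(\mu^{(j)}))\otimes\De(\bmu')$. Granting the Stage~2 conclusion for the pair $(\bla,\biota_j(\mu^{(j)}))$, tensoring each subquotient of the resulting standard filtration with $\De(\bmu')$ and applying the Stage~1 inductive hypothesis produces a standard filtration of $\De(\bla)\otimes\De(\bmu)$.

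For \textbf{Stage 2}, fix $\bla$, $i$, and $\mu\in\La_+(n,c)$, and prove that $\De(\bla)\otimes\De(\biota_i(\mu))$ has a standard filtration by double induction, primarily on $|\mu|$ and secondarily on the dominance order on $\La_+(n,|\mu|)$. For the base case $\mu=\om_c$, I would use Lemma~\ref{LDeCol} (together with the observation that a tensor product of single-color standards with pairwise distinct colors is isomorphic to $\De(\bnu)$ for the associated multipartition $\bnu$ regardless of the ordering of the tensor product) to identify
\[
\De(\bla)\otimes\De(\biota_i(\om_c))\simeq \Big(\bigotimes_{j<i}\De(\biota_j(\la^{(j)}))\Big)\otimes\bigl(\De(\biota_i(\la^{(i)}))\otimes\De(\biota_i(\om_c))\bigr)\otimes\Big(\bigotimes_{j>i}\De(\biota_j(\la^{(j)}))\Big).
\]
The proposition's hypothesis provides a standard filtration of the middle factor with subquotients $\De(\biota_i(\nu_r))$; reassembling with the outer factors via Lemma~\ref{LDeCol} yields a standard filtration of $\De(\bla)\otimes\De(\biota_i(\om_c))$. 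For $\mu\neq\om_c$, set $c:=\mu_1'$ and $\mu^-:=\mu-(1^c)$, so $|\mu^-|<|\mu|$. The primary inductive hypothesis, applied with $\biota_i(\om_c)$ in place of $\bla$, gives that $\De(\biota_i(\om_c))\otimes\De(\biota_i(\mu^-))$ has a standard filtration; by Lemma~\ref{LChInd}(ii) (using $\biota_i(\om_c)+\biota_i(\mu^-)=\biota_i(\mu)$), the module $\De(\biota_i(\mu))$ appears exactly once, and all other subquotients $\De(\biota_i(\nu))$ satisfy $\nu\lhd\mu$. This yields a short exact sequence
\[
0\to N\to \De(\biota_i(\om_c))\otimes\De(\biota_i(\mu^-))\to\De(\biota_i(\mu))\to 0
\]
with $N$ standardly filtered by $\De(\biota_i(\nu))$, $\nu\lhd\mu$. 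Tensoring with $\De(\bla)$, the middle term has a standard filtration---apply the base case to $\De(\bla)\otimes\De(\biota_i(\om_c))$ and then the primary inductive hypothesis to each subquotient tensored with $\De(\biota_i(\mu^-))$---while $\De(\bla)\otimes N$ has one by the secondary inductive hypothesis. The standard fact that a quotient of a standardly filtered module by a standardly filtered submodule is again standardly filtered---a consequence of $\Ext^1(\De(\cdot),\nabla(\cdot))=0$ together with (\ref{EDeMult})---then yields the desired standard filtration of $\De(\bla)\otimes\De(\biota_i(\mu))$.

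The main obstacle is justifying the tensor-rearrangement step in the Stage~2 base case: one must verify, as an isomorphism of $T(n,d+c)$-modules rather than merely after forgetting structure, that adjacent single-color standards of distinct colors inside an iterated tensor product can be swapped. The key is that Lemma~\ref{LDeCol} identifies any tensor product of single-color standards with pairwise distinct colors with $\De(\bnu)$ for the common multipartition regardless of order, and coassociativity of $\coproduct$ then transports such a local swap through an ambient tensor product. Everything else is character-theoretic bookkeeping and an appeal to the highest-weight-category machinery reviewed in \S\ref{SSBQHA}.
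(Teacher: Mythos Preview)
Your overall reduction strategy is reasonable and close in spirit to the paper's, but there is a genuine error in the inductive step of Stage~2.

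You claim that since $\De(\biota_i(\mu))$ appears once in the standard filtration of $\De(\biota_i(\om_c))\otimes\De(\biota_i(\mu^-))$ and all other factors $\De(\biota_i(\nu))$ satisfy $\nu\lhd\mu$, there is a short exact sequence with $\De(\biota_i(\mu))$ as the \emph{quotient}. But $\biota_i(\mu)$ is \emph{maximal} among the filtration labels, and the standard rearrangement lemma (Donkin, Proposition~A2.2(i)) places a maximal $\De$ at the \emph{bottom} of the filtration, i.e.\ as a submodule, not a quotient. Indeed, moving $\De(\biota_i(\mu))$ to the top would require $\Ext^1(\De(\biota_i(\nu)),\De(\biota_i(\mu)))=0$ for $\nu\lhd\mu$, which can fail. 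Worse, the ``standard fact'' you invoke at the end---that a quotient of a standardly filtered module by a standardly filtered submodule is standardly filtered---is simply false. For a two-weight example with $\De(0)=L(0)$ and $\De(1)$ uniserial with top $L(1)$ and socle $L(0)$, take $B=\De(1)\oplus\De(0)$ and $A=\soc B\simeq\De(0)\oplus\De(0)$; then $B/A\simeq L(1)$ is not standardly filtered. The long exact sequence in $\Ext^\ast(-,\nabla(\nu))$ gives the implication in the \emph{other} direction: if the middle and \emph{right} terms are $\De$-filtered, then so is the left (this is Donkin, Proposition~A2.2(v)).

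The fix is to reverse your sequence, exactly as the paper does: use A2.2(i) to obtain
\[
0\to\De(\biota_i(\mu))\to \De(\biota_i(\om_c))\otimes\De(\biota_i(\mu^-))\to Q\to 0
\]
with $Q$ filtered by $\De(\biota_i(\nu))$, $\nu\lhd\mu$. After tensoring with $\De(\bla)$, your base case plus the primary hypothesis handle the middle term, the secondary hypothesis handles $\De(\bla)\otimes Q$, and then A2.2(v) gives the standard filtration of $\De(\bla)\otimes\De(\biota_i(\mu))$. (A minor point: to know that the other factors are single-color, i.e.\ of the form $\De(\biota_i(\nu))$, you need Theorem~\ref{TProdCharacter} together with Lemma~\ref{LChInd}(i), not just Lemma~\ref{LChInd}(ii).) With this correction your two-stage scheme---reducing $\bmu$ to one color while keeping $\bla$ general, then splitting $\bla$ only in the base case $\mu=\om_c$---is a valid alternative to the paper's single induction on total degree $d+c$, which instead reduces \emph{both} $\bla$ and $\bmu$ to single color before the dominance induction.
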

\begin{proof}
We apply induction on the total degree $d+c$, the base case $d+c=0$ being trivial, since $T(n,0)\cong \F$. Let $d+c>0$. 
Take $\bla=(\la^{(0)},\dots,\la^{(l)})\in\La^I_+(n,d)$ and $\bmu=(\mu^{(0)},\dots,\mu^{(l)})\in \La^I_+(n,c)$. For all $i\in I$, set $d_i=|\la^{(i)}|$ and $c_i:=|\mu^{(i)}|$. 
By Theorem~\ref{LDeCol}, we have 
\begin{equation}\label{E020920}
	\De(\bla)\otimes\De(\bmu)\simeq \bigotimes_{i\in I}\Big(\De(\biota_i(\la^{(i)}))\otimes \De(\biota_i(\mu^{(i)}))\Big).
\end{equation}

Suppose there exist distinct $j,k\in I$ with $d_j,d_k>0$. Then  $d_i<d$ for all $i\in I$. By the inductive assumption, for all $i\in I$, we then have that $\De(\biota_i(\la^{(i)}))\otimes \De(\biota_i(\mu^{(i)}))$ has a standard filtration. It follows from Lemma~\ref{LChInd} and Theorem~\ref{TProdCharacter} that in this filtration only subquotients of the form $\De(\biota_i(\nu^{(i)}))$ with $\nu^{(i)}\in\La_+(n,d_i+c_i)$ appear. Hence by Theorem~\ref{LDeCol}, the right hand side of (\ref{E020920}) has a filtration with subquotients of the form 
$\bigotimes_{i\in I}\De(\biota_i(\nu^{(i)}))\simeq\De(\bnu)$. Thus we may assume that there exists a unique $i$ with $d_i=d$ and $d_k=0$ for all $k\neq i$, i.e. $\bla=\biota_i(\la)$ for some $i\in I$ and $\la\in\La_+(n,d)$. Similarly we may assume that $\bmu=\biota_j(\mu)$ for some $j\in I$ and $\mu\in\La_+(n,c)$. Moreover, we may assume that $j=i$ since otherwise 
$\De(\bla)\otimes \De(\bmu)=\De(\biota_i(\la))\otimes \De(\biota_j(\mu))
$ is a standard module, thanks to Theorem~\ref{LDeCol}. 

We now also apply induction on the dominance order on $\mu$. If $\mu$ is minimal in the dominance order, then $\mu=\om_c$ and  we are done by assumption. Otherwise, we can write $\mu=\ga+\om_{r}$ for $\ga\in\La_+(n,s)$ with $0<s,r<c$.  By the inductive assumption on the degree, we have that $\De(\biota_i(\ga))\otimes \De(\biota_i(\om_r))$ has a standard filtration. By Lemma~\ref{LChInd} and Theorem~\ref{TProdCharacter}, in this filtration $\De(\biota_i(\mu))$ appears once and other standard subquotients are of the form $\De(\biota_i(\nu))$ with $\nu\lhd\mu$. 
By \cite[Proposition A2.2(i)]{DonkinQS}, there is a short exact sequence 
$$0\to\De(\biota_i(\mu))\to \De(\biota_i(\ga))\otimes \De(\biota_i(\om_r))\to Q\to 0,$$ where $Q$ has a standard filtration with subquotients of the form $\De(\biota_i(\nu))$ with $\nu\lhd\mu$. Tensoring with $\De(\biota_i(\la))$ we get a short exact sequence
$$0\to\De(\biota_i(\la))\otimes \De(\biota_i(\mu))\to \De(\biota_i(\la))\otimes\De(\biota_i(\ga))\otimes \De(\biota_i(\om_r))\to \De(\biota_i(\la))\otimes Q\to 0.$$
By induction on the dominance order, $\De(\biota_i(\la))\otimes Q$ has a standard filtration. By induction on the degree, using  Lemma~\ref{LChInd} and Theorem~\ref{TProdCharacter}, we have that $\De(\biota_i(\la))\otimes\De(\biota_i(\ga))$ has a standard filtration with subquotients of the form $\De(\biota_i(\ka))$, hence by assumption, the middle term has a standard filtration. By \cite[Proposition A2.2(v)]{DonkinQS}, the first term has a standard filtration. 
\end{proof}

\subsection{The filtration}\label{SecFilt}
In view of Proposition~\ref{PRed}, we now fix $i\in I$, $\la\in\La_+(n,d)$,  $c\in\Z_{>0}$ such that $d+c\leq n$, and set 
$$\bla:=\biota_i(\la),\quad \bmu:=\biota_i(\om_c).
$$ 
We have highest weight vectors $v_\bla\in\De(\bla)$ and $v_\bmu\in\De(\bmu)$. 

Recalling the action of $\Si_n$ on $\La(n)$ from (\ref{ESiNAct}), we denote
$$
\Si_\la:=\{\si\in\Si_n\mid \si\la=\la\}
$$
If 
$\la=(l_1^{a_1},\dots,l_k^{a_k})$ for $l_1>\dots>l_k\geq 0$ and $a_1,\dots,a_k>0$ with $a_1+\dots+a_k=n$ then $\Si_\la=\Si_{a_1}\times\dots\times\Si_{a_k}$.

Let $\Om:=\{P\subseteq [n]\mid |P|=c\}$. 
The group $\Si_n$ acts on $\Om$ via $\si P=\{\si p_1,\dots,\si p_c\}$ for $P=\{p_1,\dots,p_c\}\in\Om$ and $\si\in\Si_n$. 
Denote 
$$\eps_P:=\eps_{p_1}+\dots+\eps_{p_c}\in\La(n,c).$$ 
Note that $\si(\eps_P)=\eps_{\si P}$ for all $\si\in\Si_n$ and $P\in \Om$. We denote 
$$
\Om_\la:=\{P\in\Om\mid \la+\eps_P\in\La_+(n,d+c)\}.
$$

Given $P=\{p_1,\dots,p_c\}$ and $Q=\{q_1,\dots,q_c\}$ in $\Om$, with $1\leq p_1<\dots<p_c\leq n$ and $1\leq q_1<\dots<q_c\leq n$, we write $P<Q$ if and only if $(p_1,\dots,p_c)<(q_1,\dots,q_c)$ lexicographically. 
This yields the total order on $\Om$. 
Let $\Om_\la=\{P_1,P_2,\dots,P_t\}$ with  
$$P_1=\{1,2,\dots,c\}<P_2<\dots<P_t.$$ 

The following is easy to see:

\begin{Lemma} \label{LOrb} 
Let $1\leq r\leq t$. Then 
\begin{enumerate}
	\item[{\rm (i)}] $P_r$ is the minimal element of the orbit $\Si_\la\cdot P_r$;
	\item[{\rm (ii)}] 
	$\Om=\bigsqcup_{r=1}^t \Si_\la\cdot P_r$.
\end{enumerate}
\end{Lemma}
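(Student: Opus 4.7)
The plan is to first give an explicit combinatorial description of $\Om_\la$ and then read both parts directly off that description.

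First, write $\la = (l_1^{a_1}, \dots, l_k^{a_k})$ with $l_1 > \dots > l_k \geq 0$, so that $\Si_\la = \Si_{a_1} \times \dots \times \Si_{a_k}$, the factors permuting, respectively, the blocks of positions
$$
B_1 = [1, a_1],\ B_2 = [a_1+1, a_1+a_2],\ \dots,\ B_k = [a_1+\dots+a_{k-1}+1, n].
$$
The condition $\la + \eps_P \in \La_+(n,d+c)$ is equivalent to $\la_j + [j\in P] \geq \la_{j+1} + [j+1\in P]$ for all $j$; this is automatic whenever $\la_j > \la_{j+1}$, and when $\la_j = \la_{j+1}$ it becomes $(j+1\in P) \Rightarrow (j \in P)$. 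Hence $P \in \Om_\la$ if and only if, in each block $B_m$, the intersection $P \cap B_m$ is an \emph{initial segment} of $B_m$.

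For part (i), fix $P_r \in \Om_\la$ and let $b_m := |P_r \cap B_m|$ for $m=1,\dots,k$. Any $Q \in \Si_\la \cdot P_r$ satisfies $|Q \cap B_m| = b_m$ for all $m$, since $\Si_\la$ preserves the blocks. Listing the elements of $Q$ in increasing order, the first $b_1$ come from $B_1$, the next $b_2$ from $B_2$, and so on. Thus, lexicographically, minimizing over $Q \in \Si_\la \cdot P_r$ amounts to choosing, block by block, the smallest $b_m$ elements of $B_m$, which is exactly the initial segment of $B_m$ of length $b_m$. This is $P_r$, proving minimality.

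For part (ii), given any $P \in \Om$, replace $P \cap B_m$ by the initial segment of $B_m$ of length $|P \cap B_m|$ for each $m$; this can be achieved by an element of $\Si_\la$, so every $\Si_\la$-orbit on $\Om$ meets $\Om_\la$. Conversely, by part (i), each such orbit meets $\Om_\la$ in a unique element, namely its lex-minimum. Therefore $\Om_\la = \{P_1,\dots,P_t\}$ is a complete and irredundant set of orbit representatives, giving the disjoint decomposition in (ii). The only obstacle is essentially notational — making the block structure and the ``initial segment'' characterization precise — but no deeper argument is required.
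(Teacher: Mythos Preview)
Your proof is correct and follows essentially the same approach as the paper's own proof: both identify the block decomposition $[n]=B_1\sqcup\cdots\sqcup B_k$ coming from $\la=(l_1^{a_1},\dots,l_k^{a_k})$, observe that $\Si_\la$-orbits on $\Om$ are parametrized by the tuples $(|P\cap B_m|)_m$, and conclude that each orbit meets $\Om_\la$ in exactly its lex-minimal element. Your version is simply more explicit---you spell out the ``initial segment'' characterization of $\Om_\la$ and the block-by-block minimization---where the paper merely says ``it is clear.''
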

\begin{proof}
Write $\la=(l_1^{a_1},\dots,l_k^{a_k})$ for $l_1>\dots>l_k\geq 0$ and $a_1,\dots,a_k>0$ with $a_1+\dots+a_k=n$, so that  $\Si_\la=\Si_{a_1}\times\dots\times\Si_{a_k}$ and  $A_1:=[1,a_1],A_2:=[a_1+1,a_1+a_2],\dots,A_k:=[n-a_k+1,n]$ are the orbits of $\Si_\la$ on $[n]$. Now $P,Q\in\Om$ are in the same $\Si_\la$-orbit if and only if $|P\cap A_s|=|Q\cap A_s|$ for all $s=1,\dots,k$, and it is clear that each orbit has a unique element from $\Om_\la$ which is the lexicographically minimal element of the orbit.  
\end{proof}

Let $P=\{p_1,\dots,p_c\}\in\Om$ with $p_1<\dots<p_c$. There is a unique tableau  $\T^P\in\Std^X(\bmu)$ with $\bl^{\T^P}=p_1\cdots p_c$ and $\bx^{\T^P}=e_i^c$. We denote the corresponding standard basis vector 
$$
w_P:=v_{\T^P}=\eta^{e_i^c}_{p_1\cdots p_c,12\cdots c}\,v_\bmu\in\De(\bmu),
$$
see (\ref{EBasisDe}). Note that the vectors $w_P$ do not exhaust the standard basis of $\De(\bmu)$.

\begin{Lemma} \label{LOneDim} 
Let $\nu\in\La(n,c)$. If $\eta_{\biota_i(\nu)}\De(\bmu)\neq 0$, then $\nu$ is of the form $\eps_P$ and $w_P$ spans $\eta_{\biota_i(\nu)}\De(\bmu)$.
\end{Lemma}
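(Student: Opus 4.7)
The plan is to chase through the basis description of $\De(\bmu)$ and use the rigidity forced by the single-column shape of $\om_c$ together with the defining axioms of the heredity data.

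First I would recall from (\ref{EBasisDe}) that $\De(\bmu)$ has basis $\{v_\T \mid \T \in \Std^X(\bmu)\}$, and from (\ref{EWtvT}) that $v_\T \in \eta_{\balpha(\T)}\De(\bmu)$. Since the standard idempotents $\eta_\bnu$ for $\bnu \in \La^I(n,c)$ are pairwise orthogonal, the weight space $\eta_{\biota_i(\nu)}\De(\bmu)$ is spanned by those $v_\T$ with $\balpha(\T) = \biota_i(\nu)$. So the lemma reduces to identifying $\Std^X(\bmu, \biota_i(\nu))$ for $\bmu = \biota_i(\om_c)$.

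Next I would observe that since $\bmu = \biota_i(\om_c)$, a tableau $\T \in \Std^X(\bmu)$ is determined by its single nonzero component $T^{(i)} \in \Std^{X(i)}(\om_c)$, a single column of $c$ boxes whose entries are from the alphabet $[n] \times X(i)$. For $\balpha(\T) = \biota_i(\nu)$, the definition of $\balpha$ requires that each color $x_k$ in $\T$ admits $j \in I$ with $e_j x_k = x_k$, and moreover every such $j$ must equal $i$. By Definition~\ref{DCC}(c), for $x \in X(i)$ we have $e_i x = \de_{x,e_i} x$, so $e_i x_k = x_k$ forces $x_k = e_i$. Therefore every entry of $T^{(i)}$ has color $e_i$.

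Finally, since $e_i$ is a nonzero idempotent in a superalgebra, $|e_i| = 2|e_i| = \0$, so $e_i$ is even. The column condition (C) on $T^{(i)}$ then forbids repeated entries, which means the letters $l_1, \dots, l_c$ read down the column are strictly increasing. Setting $P := \{l_1, \dots, l_c\}$ yields $\nu = \eps_{l_1} + \dots + \eps_{l_c} = \eps_P$, and $\T = \T^P$, so the unique basis element in the weight space is $v_{\T^P} = w_P$. Since $w_P \neq 0$ (as it is a basis vector), $w_P$ spans $\eta_{\biota_i(\nu)}\De(\bmu)$. There is essentially no obstacle: the statement is a direct bookkeeping consequence of the axiom $e_i x = \de_{x,e_i} x$ and the evenness of $e_i$, combined with the one-column shape of $\om_c$.
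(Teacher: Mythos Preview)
Your proof is correct and follows essentially the same approach as the paper's own proof, which also reduces to identifying $\Std^X(\bmu,\biota_i(\nu))$ via (\ref{EBasisDe}), (\ref{EWtvT}) and then invokes Definition~\ref{DCC}(c). You have simply unpacked the details that the paper leaves implicit: why the axiom $e_ix=\de_{x,e_i}x$ forces every color to be $e_i$, why $e_i$ is even, and hence why the column condition (C) forces the letters to be distinct.
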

\begin{proof}
By (\ref{EBasisDe}), (\ref{EWtvT}), the weight space $\eta_{\biota_i(\nu)}\De(\bmu)\neq 0$ is spanned by the basis elements $v_\T$ such that $\T\in\Std^X(\bmu,\biota_i(\nu))$. As $\bmu=\biota_i(\om_c)$, we 
deduce, using the property (c) of Definition~\ref{DCC}, that $\T=\T_P$ for some $P\in\Om$, i.e. $v_\T=w_P$. 
\end{proof}

For $\si\in\Si_n$ denote 
$$
\biota_i(\si):=(1,\dots,1,\si,1,\dots,1)\in\Si_n^I,
$$
with $\si$ in the $i$th position. 
Recalling (\ref{EXiSi}), we have an element 
$$
\xi_c(\biota_i(\si))=(\xi_{\sigma}^{e_i})^{\otimes c}\in T^A(n,c).
$$

\begin{Lemma} \label{L030920} 
Let $P=\{p_1,\dots,p_c\}\in\Om$ with $p_1<\dots< p_c$, and $\si\in\Si_n$ such that $\si p_1<\dots< \si p_c$. Then  
$\xi_c(\biota_i(\si))w_P=w_{\si P}$.
\end{Lemma}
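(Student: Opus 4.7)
My plan has two main pieces: first pin down $\xi_c(\biota_i(\si))w_P$ as a scalar multiple of $w_{\si P}$ by weight-space considerations, then compute the scalar to be $1$ via direct tensor-product bookkeeping, exploiting that the computation is concentrated on a single color $i$.

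For the first step, $w_P$ lies in $\eta_{\biota_i(\eps_P)}\De(\bmu)$, and Corollary~\ref{CActionOnWtSp} (applied with $\bsi=\biota_i(\si)$ and $\bla=\biota_i(\eps_P)$) sends this weight space to $\eta_{\biota_i(\si)\,\biota_i(\eps_P)}\De(\bmu)=\eta_{\biota_i(\eps_{\si P})}\De(\bmu)$, using the identity $\si\eps_P=\eps_{\si P}$. By Lemma~\ref{LOneDim} this target weight space is one-dimensional, spanned by $w_{\si P}$, so $\xi_c(\biota_i(\si))w_P=\alpha\, w_{\si P}$ for a unique $\alpha\in\F$. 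Note also that $e_i$ is necessarily even, since any nonzero homogeneous idempotent in a superalgebra is even (if $e\in A_\1$ then $e^2\in A_\0$ but $e^2=e$, forcing $e=0$). Consequently $e_i\cdot e_i\in B_\a$, so $[\,\cdot\,]^!_\c=1$ and $\eta^{e_i^c}_{\br,\bs}=\xi^{e_i^c}_{\br,\bs}$; moreover no supersigns appear in the $\Si_c$-orbit expansion of $\xi^{e_i^c}_{\br,\bs}$ or in multiplying pure $e_i$-tensors.

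For the second step, expand $\xi_c(\biota_i(\si))$ via (\ref{EXiSi}) as a sum of star products indexed by $(d_0,\dots,d_l)$ with $\sum_j d_j=c$. I claim only the summand with $d_i=c$ (and $d_j=0$ for $j\neq i$), namely $(\xi_\si^{e_i})^{\otimes c}$, gives a nonzero contribution on $w_P$: every other summand, expanded as a sum of tensor products in $M_n(A)^{\otimes c}$, contains in some tensor slot a factor $\xi^{e_j}_{r,r}$ with $j\neq i$, and multiplication against the pure-$e_i$ tensor $\xi^{e_i}_{p_{\sigma k},\sigma k}$ appearing in the corresponding slot of $\eta^{e_i^c}_{p_1\cdots p_c,1\cdots c}$ gives $\xi^{e_j}_{r,r}\cdot\xi^{e_i}_{r,\sigma k}=\xi^{e_je_i}_{r,\sigma k}=0$ by Lemma~\ref{idemactionNew}(i). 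The surviving summand is then computed by applying the matrix identity $\xi_\si^{e_i}\cdot\xi^{e_i}_{r,s}=\xi^{e_i}_{\si r,s}$ in each tensor slot of the expansion $\xi^{e_i^c}_{p_1\cdots p_c,1\cdots c}=\sum_{\sigma\in\Si_c}\xi^{e_i}_{p_{\sigma 1},\sigma 1}\otimes\cdots\otimes\xi^{e_i}_{p_{\sigma c},\sigma c}$, yielding $(\xi_\si^{e_i})^{\otimes c}\cdot\eta^{e_i^c}_{p_1\cdots p_c,1\cdots c}=\eta^{e_i^c}_{\si p_1\cdots \si p_c,1\cdots c}$. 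The hypothesis $\si p_1<\dots<\si p_c$ ensures that $(\si p_1,\dots,\si p_c)$ is the increasing enumeration of $\si P$, so this agrees with $w_{\si P}=\eta^{e_i^c}_{\si p_1\cdots \si p_c,1\cdots c}v_\bmu$ by definition, giving $\alpha=1$.

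The main obstacle is the bookkeeping in the second step: one must verify that every term in the star-product expansion of a mixed-color summand really does vanish on $w_P$, and not merely in $T^A(n,c)$ but inside the ambient $M_n(A)^{\otimes c}$. Once the single surviving summand is isolated, the remaining calculation is a clean tensorwise application of matrix multiplication, greatly simplified by the evenness of $e_i$ which rules out all supersign contributions.
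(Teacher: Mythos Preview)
Your proof is correct and follows essentially the same route as the paper, which also reduces to the identity $(\xi_\si^{e_i})^{\otimes c}\,\eta^{e_i^c}_{p_1\cdots p_c,1\cdots c}=\eta^{e_i^c}_{\si p_1\cdots\si p_c,1\cdots c}$ in $T^A(n,c)$ and then applies it to $v_\bmu$. Your weight-space step is redundant since your direct computation already gives the result, but your explicit argument that the summands with $d_i\neq c$ in (\ref{EXiSi}) annihilate $w_P$ is a genuine addition: the paper simply asserts $\xi_c(\biota_i(\si))=(\xi_\si^{e_i})^{\otimes c}$ just before the lemma, which is not literally true in $T^A(n,c)$ when $|I|>1$, though it does hold after right-multiplication by any $\eta_{\biota_i(\nu)}$.
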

\begin{proof}
By definition of $\xi_{\sigma}^{e_i}$, we have in $T(n,c)$:
$$
(\xi_{\sigma}^{e_i})^{\otimes c} \,\eta^{e_i^c}_{p_1\cdots p_c,12\cdots c}
=
\eta^{e_i^c}_{\si p_1\cdots \si p_c,12\cdots c}.
$$
So 
\begin{align*}
	\xi_c(\biota_i(\si))w_P=
	(\xi_{\sigma}^{e_i})^{\otimes c} \,\eta^{e_i^c}_{p_1\cdots p_c,12\cdots c}\,v_\bmu
	=
	\eta^{e_i^c}_{\si p_1\cdots \si p_c,12\cdots c}\,v_\bmu=w_{\si P},
\end{align*}
as required. 
\end{proof}

\begin{Corollary} \label{CGenMu} 
Let $P\in \Om$. Then $T(n,c)w_P=\De(\bmu)$. 
\end{Corollary}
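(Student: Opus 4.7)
The plan is to show that $w_{P_1} = v_\bmu$ lies in $T(n,c)w_P$ for every $P \in \Om$, and then invoke the fact that $v_\bmu$ generates $\De(\bmu)$ as a highest weight module.

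First I would verify that $w_{P_1} = v_\bmu$. Since $P_1 = \{1,2,\dots,c\}$ and $\om_c = \eps_1 + \dots + \eps_c$, we have $\bl^{\om_c} = 12\cdots c = \bl^\bmu$, so
\[
w_{P_1} = \eta^{e_i^c}_{12\cdots c,\, 12\cdots c}\, v_\bmu = \eta_\bmu v_\bmu = v_\bmu.
\]

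Next, given an arbitrary $P = \{p_1 < \dots < p_c\} \in \Om$, I would choose a permutation $\si \in \Si_n$ with $\si p_j = j$ for $j = 1,\dots,c$ (extended arbitrarily on the remaining letters). Such $\si$ clearly exists, and it satisfies the hypothesis $\si p_1 < \si p_2 < \dots < \si p_c$ required by Lemma~\ref{L030920}. Applying that lemma yields
\[
\xi_c(\biota_i(\si))\, w_P = w_{\si P} = w_{P_1} = v_\bmu.
\]
Since $\xi_c(\biota_i(\si)) \in T(n,c)$ by (\ref{EXiSi}), this shows $v_\bmu \in T(n,c) w_P$.

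Finally, $\De(\bmu)$ is a highest weight module with highest weight vector $v_\bmu$, so $T(n,c) v_\bmu = \De(\bmu)$. Combining, $T(n,c) w_P \supseteq T(n,c) v_\bmu = \De(\bmu)$, and the reverse inclusion is automatic. There is no serious obstacle here; the argument is essentially a transport-of-structure statement, relying only on Lemma~\ref{L030920} to convert a lexicographic straightening permutation into an action of $T(n,c)$, and on the highest-weight generation of $\De(\bmu)$.
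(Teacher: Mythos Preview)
Your proof is correct and follows essentially the same approach as the paper: choose $\si\in\Si_n$ sending $p_a\mapsto a$, apply Lemma~\ref{L030920} to obtain $\xi_c(\biota_i(\si))w_P=w_{\{1,\dots,c\}}=v_\bmu$, and conclude from $T(n,c)v_\bmu=\De(\bmu)$. You have merely spelled out a few details (e.g.\ the identification $w_{P_1}=v_\bmu$) that the paper leaves implicit.
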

\begin{proof}
Take $\si\in\Si_n$ with $\si(p_a)=a$ for $a=1,\dots,c$. By Lemma~\ref{L030920}, we have $\xi_c(\biota_i(\si))w_P=w_{\{1,\dots,c\}}=v_\bmu$, and the result follows since $T(n,c)v_\bmu=\De(\bmu)$. 
\end{proof}

\begin{Lemma} \label{Span}
We have $T(n,d+c)(v_\bla\otimes w_{P_t})=\De(\bla)\otimes\De(\bmu)$. 
\end{Lemma}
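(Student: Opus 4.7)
Let $M := T(n,d+c)\langle v_\bla\otimes w_{P_t}\rangle$; the reverse inclusion being trivial, I aim to show $\De(\bla)\otimes\De(\bmu)\subseteq M$.

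The key structural observation I plan to establish is that $P_t=\{m+1,m+2,\ldots,m+c\}$, where $m$ denotes the number of positive parts of $\la$. Indeed, $\{m+1,\ldots,m+c\}$ lies in $\Om_\la$ (appending $c$ ones to $\la$'s trailing zeros gives a partition, since $\la_m\geq 1>0=\la_{m+1}$), and it is lex-maximal because any $P\in\Om_\la$ with first element $p>m+1$ would require either $p-1\in P$ (impossible as $p$ is the minimum of $P$) or $\la_{p-1}>\la_p$ (impossible as both are zero). Consequently, the multisets $\bl^\la\subseteq[1,m]$ and $P_t\subseteq[m+1,n]$ are disjoint.

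The plan is to exploit this disjointness via star products. For any $y\in T(n,d)$ with $y\eta_\bla=y$ (so $y$'s column indices all lie in $\bl^\la$) and any $\xi\in T(n,c)$ with $\xi\eta_{\biota_i(\eps_{P_t})}=\xi$ (so $\xi$'s column indices all lie in $P_t$), Lemma~\ref{LSingleSep} shows that $y*\xi$ is a single basis element of $T(n,d+c)$. By the bialgebra compatibility (Lemma~\ref{LNablaStar}), $\coproduct(y*\xi)=\coproduct(y)*\coproduct(\xi)$, and only the $T(n,d)\otimes T(n,c)$-summand of the coproduct acts on $\De(\bla)\otimes\De(\bmu)$. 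Expanding this summand via Lemma~\ref{coprodeta}, each contributing term decomposes as a star product $(y'*\xi')\otimes(y''*\xi'')$ where $(y',y''), (\xi',\xi'')$ come from coproduct splits of $y,\xi$ respectively; the column multiset of the first tensor factor is a subset of $\bl^\la\cup P_t$, and for the action on $v_\bla$ to be nonzero it must equal $\bl^\la$. By the disjointness of $\bl^\la$ and $P_t$, this forces $y'=y$ and $\xi'=1$. The unique surviving contribution $y\otimes\xi$ yields $(y*\xi)\cdot(v_\bla\otimes w_{P_t})=(yv_\bla)\otimes(\xi w_{P_t})$.

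Finally, $T(n,d)v_\bla=\De(\bla)$ together with $v_\bla=\eta_\bla v_\bla$ lets us realize any $u\in\De(\bla)$ as $yv_\bla$ with $y\eta_\bla=y$, and Corollary~\ref{CGenMu} analogously gives every $w\in\De(\bmu)$ as $\xi w_{P_t}$ with $\xi\eta_{\biota_i(\eps_{P_t})}=\xi$. Thus every $u\otimes w\in\De(\bla)\otimes\De(\bmu)$ lies in $M$, completing the proof. The main technical hurdle I anticipate is the rigorous verification that all non-canonical coproduct splits of $y*\xi$ truly annihilate $v_\bla\otimes w_{P_t}$; this comes down to carefully tracking column multisets through the bialgebra star product and invoking the disjointness of $\bl^\la$ and $P_t$ at each step.
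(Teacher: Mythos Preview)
Your proof is correct and follows essentially the same approach as the paper: both identify $P_t=\{m+1,\dots,m+c\}$ (the paper writes $h$ for your $m$), exploit the disjointness of $\bl^\la\subseteq[1,m]$ and $P_t\subseteq[m+1,m+c]$ to show that only the canonical split of the coproduct survives when acting on $v_\bla\otimes w_{P_t}$, and invoke Corollary~\ref{CGenMu} to generate $\De(\bmu)$ from $w_{P_t}$. The only organizational difference is that you package the coproduct computation via the bialgebra-for-$*$ property (Lemma~\ref{LNablaStar}), whereas the paper applies Lemma~\ref{coprodeta} directly to the concatenated basis element $\eta^{\bx\bb}_{\br\bt,\bl^\bla\bu}$ and checks the factorial coefficient explicitly via~(\ref{EFact1}); your formulation sidesteps that coefficient check since the $(d,0)$- and $(0,c)$-components of $\coproduct(y)$ and $\coproduct(\xi)$ are automatically $y\otimes 1$ and $1\otimes\xi$.
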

\begin{proof}
Let $h$ be maximal with $\la_h>0$, so $\bl^\bla= 1^{\la_1}\cdots h^{\la_h}$. Since $n\geq d+c$, we have  $P_t=\{h+1,\dots,h+c\}$. 

By (\ref{EBasisDe}), $\De(\bla)$ is spanned by elements of the form $\eta_{\br,\bl^\bla}^\bx v_\bla$ for $(\bx,\br,\bl^\bla)\in\Seq^X(n,d)$. Let $\Triple'\in \Seq_0^X(n,d)$ with $\Triple'\sim (\bx,\br,\bl^\bla)$, see (\ref{ESeq0}). Then 
\begin{equation}\label{EFact1}
	[\Triple']^!_\c=1, 
\end{equation}
since $x=xe_i\in B_\a$ for all $x\in X(i)$, see (\ref{EBABC}) and (\ref{EC!}).

On the other hand, for $(\bb,\bt,\bu)\in\Seq^B(n,c)$, we have that $\eta_{\bt,\bu}^\bb w_{P_t}=0$ unless $\eta_{\bt,\bu}^\bb\eta_{\biota_i(\eps_{P_t})}=\eta_{\bt,\bu}^\bb$, and, in view of Corollary~\ref{CGenMu}, $\De(\bmu)$ is spanned by all $\eta_{\bt,\bu}^\bb w_{P_t}$ with $\bu\sim (h+1)\cdots(h+c)$. 

Let  $(\bx,\br,\bl^\bla)\in\Seq^X(n,d)$ and $(\bb,\bt,\bu)\in\Seq^B(n,c)$ satisfy $\bu\sim (h+1)\cdots(h+c)$, and $\Triple\in \Seq^B_0(n,d+c)$ be the initial triple with $\Triple\sim(\bx\bb,\br\bt,\bl^\bla\bu)$. 
Let $(\Triple^1,\Triple^2)\in \textup{Spl}(\Triple)$ with $\Triple^1\in\Seq^B_0(n,d)$ and $\Triple^2\in\Seq^B_0(n,c)$. 
Suppose $\Triple^1=(\ba,\bv,\bs)\not\sim (\bx,\br,\bl^\bla)$. Since $\bl^\bla=
1^{\la_1}\cdots h^{\la_h}$ and $\bu\sim (h+1)\cdots(h+c)$, 
we necessarily have that  $s_k\in\{h+1,\dots,h+c\}$ for some $1\leq k\leq d$. Hence $\eta_{\Triple^1}v_\bla=0$. 
Now, by Lemma~\ref{coprodeta} and (\ref{EFact1}), 
$$
\eta_{\br\bt,\bl^\bla\bu}^{\bx\bb}(v_\bla\otimes w_{P_t})=(\eta_{\br,\bl^\bla}^\bx v_\bla)\otimes (\eta^\bb_{\bt,\bu}w_{P_t}),
$$
which implies the lemma.
\end{proof}

For $r=0,1,\dots,t$, we denote 
$$
M_r:=T(n,d)\lan v_{\bla} \otimes w_{P_s}\mid 1\leq s\leq r\ran\subseteq \De(\bla)\otimes\De(\bmu).
$$
In view of Lemma~\ref{Span}, we have a filtration 
\begin{equation}\label{EMFilt}
0=M_0\subseteq M_1\subseteq\dots\subseteq M_t=\De(\bla)\otimes\De(\bmu).
\end{equation}
Our goal is to show that 
$M_r/M_{r-1}\simeq\De(\biota_i(\la+\eps_{P_r}))$ for all $r=1,\dots,t$, to get the required standard filtration of $\De(\bla)\otimes\De(\bmu)$.

\begin{Lemma} \label{LLevel} 
If $1\leq r\leq t$ and $P \in \Si_\la\cdot P_r$, then $v_{\bla} \otimes w_{P} \in M_r$.
\end{Lemma}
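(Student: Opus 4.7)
\medskip
\noindent\emph{Proof proposal.}
The plan is to exhibit, for each $P \in \Si_\la \cdot P_r$, an element of $T(n,d+c)$ which sends $v_\bla \otimes w_{P_r}$ to a nonzero scalar multiple of $v_\bla \otimes w_P$. Write $P = \sigma P_r$ for some $\sigma \in \Si_\la$, and consider the element $\xi := \xi_{d+c}(\biota_i(\sigma)) \in T(n,d+c)$ introduced in~(\ref{EXiSi}). The action of $\xi$ on the tensor product $\De(\bla) \otimes \De(\bmu)$ is through the coproduct $\coproduct$. By Lemma~\ref{LDeWeyl}, $\coproduct(\xi) = \sum_{e=0}^{d+c} \xi_e(\biota_i(\sigma)) \otimes \xi_{d+c-e}(\biota_i(\sigma))$, and since $\De(\bla)$ is a $T(n,d)$-module and $\De(\bmu)$ is a $T(n,c)$-module, only the $e = d$ summand acts nontrivially:
$$
\xi \cdot (v_\bla \otimes w_{P_r}) = \bigl(\xi_d(\biota_i(\sigma)) v_\bla\bigr) \otimes \bigl(\xi_c(\biota_i(\sigma)) w_{P_r}\bigr).
$$

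Next I analyze the two factors separately. Since $\sigma \in \Si_\la$, we have $\biota_i(\sigma) \bla = \biota_i(\sigma \la) = \biota_i(\la) = \bla$, so Lemma~\ref{LStab} gives $\xi_d(\biota_i(\sigma)) v_\bla = \pm v_\bla$. For the second factor, $w_{P_r} \in \eta_{\biota_i(\eps_{P_r})} \De(\bmu)$, so by Corollary~\ref{CActionOnWtSp} the image $\xi_c(\biota_i(\sigma)) w_{P_r}$ lies in $\eta_{\biota_i(\sigma \eps_{P_r})} \De(\bmu) = \eta_{\biota_i(\eps_{\sigma P_r})} \De(\bmu) = \eta_{\biota_i(\eps_P)} \De(\bmu)$, which by Lemma~\ref{LOneDim} is one-dimensional and spanned by $w_P$.

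Finally I must verify that the resulting scalar is nonzero. Both $\eta_{\biota_i(\eps_{P_r})} \De(\bmu)$ and $\eta_{\biota_i(\eps_P)} \De(\bmu)$ are one-dimensional, and Corollary~\ref{CActionOnWtSp} asserts that $\xi_c(\biota_i(\sigma))$ maps the former onto the latter; hence this map is an isomorphism and $\xi_c(\biota_i(\sigma)) w_{P_r} = c\, w_P$ for some $c \neq 0$. Combining, $\xi \cdot (v_\bla \otimes w_{P_r}) = \pm c\, (v_\bla \otimes w_P)$, so $v_\bla \otimes w_P \in T(n,d+c) \lan v_\bla \otimes w_{P_r} \ran \subseteq M_r$, as required. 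The only point demanding care is extracting the clean splitting of the coproduct action on the tensor product via the degree grading; once that is in hand the argument is essentially a weight-space calculation using the tools already assembled in Sections~\ref{SSChF} and~\ref{SSQHT}. \qed
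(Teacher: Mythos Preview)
Your argument is correct and follows essentially the same line as the paper's: act on $v_\bla\otimes w_{P_r}$ by $\xi_{d+c}(\biota_i(\sigma))$, split via Lemma~\ref{LDeWeyl}, and use Lemma~\ref{LStab} on the first factor. The only difference is in the second factor: the paper chooses $\sigma\in\Si_\la$ so that $\sigma$ is order-preserving on the elements of $P_r$, which lets it invoke Lemma~\ref{L030920} to get $\xi_c(\biota_i(\sigma))w_{P_r}=w_P$ on the nose; you instead take an arbitrary $\sigma$ and use Corollary~\ref{CActionOnWtSp} together with Lemma~\ref{LOneDim} to conclude the image is a nonzero scalar multiple of $w_P$. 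Your route is slightly more conceptual and sidesteps the order-preserving choice, at the cost of not identifying the scalar; the paper's is more explicit. Either way the conclusion $v_\bla\otimes w_P\in M_r$ follows.
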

\begin{proof}
Write $P=\{p_1,\dots,p_c\}$ with $p_1<\dots<p_c$.  
Let $\si\in\Si_\la$ be such that $\si P_r=P$ and  $\si p_1<\dots <\si p_c$. 
Note using Lemmas~\ref{LStab}, \ref{L030920} and  \ref{LDeWeyl} that 
$$
v_\bla\otimes w_P=
\pm (\xi_{d}(\biota_i(\si))v_\bla)\otimes (\xi_{c}(\biota_i(\si))w_{P_r})=
\pm
\xi_{d+c}(\biota_i(\si))(v_{\bla} \otimes w_{P_r})\in M_r,$$
as required.
\end{proof}

\begin{Lemma} \label{HiWtQuo}
Let 
$1\leq r\leq t$ and $E\in\Y(n,d+c)$. If $E(v_\bla\otimes w_{P_r})\not\in M_{r-1}$ then $E=\eta_{\biota_i(\la+\eps_{P_r})}$. In particular, $M_r/M_{r-1}$ is a highest weight module of weight $\biota_i(\la+\eps_{P_r})$. 
\end{Lemma}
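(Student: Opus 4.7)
The plan is to analyze the action of an arbitrary $Y$-basis element on $v_\bla\otimes w_{P_r}$ via the coproduct (Lemma~\ref{coprodeta}), use the highest weight property of $v_\bla$ to drastically restrict the surviving summands, and then apply Lemma~\ref{LYCrit} for the ``in particular'' statement.

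Write $E=\Y_\T=\eta^{\by^\T}_{\bl^\bnu,\bl^\T}$ for some $\bnu\in\La_+^I(n,d+c)$ and $\T\in\Std^Y(\bnu)$, and set $\Triple=(\by^\T,\bl^\bnu,\bl^\T)$. Applying Lemma~\ref{coprodeta} yields
\[
E(v_\bla\otimes w_{P_r})=\sum_{(\Triple^1,\Triple^2)\in\textup{Spl}(\Triple)}\pm\,\tfrac{[\Triple]^!_{\c}}{[\Triple^1]^!_{\c}[\Triple^2]^!_{\c}}\,\bigl(\eta_{\Triple^1}v_\bla\bigr)\otimes\bigl(\eta_{\Triple^2}w_{P_r}\bigr),
\]
where only splittings with $\Triple^1\in\Seq^B_0(n,d)$ and $\Triple^2\in\Seq^B_0(n,c)$ can contribute, since the two tensor factors must match the degrees of $v_\bla$ and $w_{P_r}$.

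I would then constrain each tensor factor. Since $v_\bla\in\eta_\bla\De(\bla)$ and $\bla=\biota_i(\la)$ is concentrated in color $i$, the nonvanishing of $\eta_{\Triple^1}\eta_\bla$ forces each $Y$-letter of $\Triple^1$ to lie in $Y(i)$ by Lemma~\ref{idemactionNew}. Lemma~\ref{LYRaising} (applied with $\bnu=\bla$) then forces each such letter to also lie in $X$; combined with the Corollary following Lemma~\ref{idemactionNew} giving $X\cap Y=\{e_i\mid i\in I\}$, this pins every $Y$-letter of $\Triple^1$ to be $e_i$. Hence $\Triple^1$ is of the form $(e_i^d,\br^1,\bl^\bla)$ up to reordering, and $\eta_{\Triple^1}v_\bla=\pm v_\SS$ for some $\SS\in\Std^X(\bla)$ with $\bx^\SS=e_i^d$. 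A parallel argument using Lemma~\ref{LOneDim} shows that $\eta_{\Triple^2}w_{P_r}$ is either zero or a signed scalar multiple of $w_Q$ for a unique $Q\in\Om$ determined by the second index of $\Triple^2$. Thus $E(v_\bla\otimes w_{P_r})$ is a linear combination of vectors $v_\SS\otimes w_Q$.

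Finally, by Lemma~\ref{LLevel}, any $v_\bla\otimes w_Q$ with $Q\in\Si_\la\cdot P_s$ for $s<r$ already lies in $M_{r-1}$. Using weight considerations (each contributing $v_\SS\otimes w_Q$ must sit in the weight space $\bnu$), the lex-minimality of $P_r$ in its $\Si_\la$-orbit (Lemma~\ref{LOrb}), and the concrete form $\bl^\bla=1^{\la_1}\cdots h^{\la_h}$ (with $h$ the last nonzero part), one verifies that the only splittings producing a term outside $M_{r-1}$ come from $\bnu=\biota_i(\la+\eps_{P_r})$ and the initial tableau $\T=\T^\bnu$, i.e.\ from $E=\eta_{\biota_i(\la+\eps_{P_r})}$; in that case a direct computation gives $E(v_\bla\otimes w_{P_r})=v_\bla\otimes w_{P_r}$. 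The ``in particular'' follows by applying Lemma~\ref{LYCrit} in $M_r/M_{r-1}$ to the image of $v_\bla\otimes w_{P_r}$. The main obstacle is precisely the last combinatorial verification: showing that non-initial splittings either produce $w_Q$ with $Q$ in a strictly earlier $\Si_\la$-orbit, contribute $v_\SS\otimes w_Q$ pairs that must in fact lie in $M_{r-1}$, or else fail the weight constraint imposed by $\bnu$.
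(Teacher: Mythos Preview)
Your overall strategy matches the paper's, but there is a genuine gap at the crucial step. After you deduce $\by^\T=e_i^{d+c}$, the paper exploits a fact you do not use: since $\T\in\Std^Y(\bnu)$ has all colors equal to the even element $e_i$, it is an ordinary semistandard tableau of shape $\nu$ (with $\bnu=\biota_i(\nu)$), and writing $E=\eta^{e_i^{d+c}}_{\br,\bs}$ with $\br=\bl^\bnu$, $\bs=\bl^\T$ one has $r_k\le s_k$ for every $k$. This inequality is preserved under any splitting $(\Triple^1,\Triple^2)$. Now if $\eta_{\Triple^1}v_\bla\neq 0$ then $\bs^1\sim\bl^\bla$, and since $r^1_k\le s^1_k$ pointwise the resulting weight $\ga$ (determined by $\br^1$) satisfies $\ga\unrhd\la$; combined with the dominance bound (\ref{ETopTerm}) this forces $\ga=\la$, so $\eta_{\Triple^1}v_\bla$ is a scalar multiple of $v_\bla$ itself, not a general $v_\SS$. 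Moreover $\sum r^1_k=\sum s^1_k$ together with $r^1_k\le s^1_k$ gives $r^1_k=s^1_k$ for all $k$, so any strict inequality $r_k<s_k$ sits entirely in $\Triple^2$, which then produces $w_P$ with $P<P_r$; Lemma~\ref{LOrb} finishes.

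Your version instead allows a general $v_\SS$ in the first slot and then hopes to show that $v_\SS\otimes w_Q\in M_{r-1}$. That is the wrong shape of argument: $M_{r-1}$ is generated by the vectors $v_\bla\otimes w_{P_s}$ with $s<r$, and there is no mechanism in your outline that would place a vector $v_\SS\otimes w_Q$ with $\SS\neq\T^\bla$ inside it. The ``combinatorial verification'' you flag as an obstacle is not merely tedious---as stated it does not go through. The fix is precisely the $r_k\le s_k$ observation above, which eliminates the $v_\SS$ ambiguity entirely and reduces everything to the $v_\bla\otimes w_P$ case you already know how to handle via Lemmas~\ref{LOrb} and~\ref{LLevel}.
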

\begin{proof}
Suppose 
$E(v_\bla\otimes w_{P_r})\not\in M_{r-1}$. Write   $E=\Y_\T:=\eta^{\by^\T}_{\bl^\bnu,\bl^\T}$, with $\T\in\Std^Y(\bnu)$ for some $\bnu\in\La^I_+(n,d+c)$.
By Lemma~\ref{coprodeta}, $\coproduct(\eta^{\by^\T}_{\bl^\bnu,\bl^\T})$ is a linear combination of elements of the form $\eta_{\br,\bs}^\by\otimes \eta_{\br',\bs'}^{\by'}$ such that $\by\by'\sim\by^T$. By Lemma~\ref{LYRaising}, $\eta_{\br,\bs}^\by v_\bla\neq 0$ only if $\by=e_i^d$, and 
$\eta_{\br',\bs'}^{\by'} w_{P_r}\neq 0$ only if $\by'=e_i^c$. 
We conclude that 
$\by^\T=e_{i}^{d+c}$, and so 
$E$ can be written in the form $E=\eta^{e_i^{d+c}}_{\br,\bs}$ with $r_k\leq s_k$ for all $k$. 

If $r_k<s_k$ for some $k$, then in view of Lemma~\ref{coprodeta} and Lemma~\ref{LOneDim},  $E(v_\bla\otimes w_{P_r})$ is a multiple of $v_\bla\otimes w_P$ for some $P<P_r$. By Lemma~\ref{LOrb}, $P\in \Si_\la\cdot P_s$ for some $s<r$, hence $v_\bla\otimes w_P\in M_s\subseteq M_{r-1}$ by Lemma~\ref{LLevel}, giving a contradiction. 
So $r_k= s_k$ for all $k$. Then $E$ is of the form $\eta_\bnu$, and $E(v_\bla\otimes w_{P_r})\neq0$ implies $\bnu=\biota_i(\la+\eps_{P_r})$. 

The second statement now follows from Lemma~\ref{LYCrit}. 
\end{proof}

\begin{Theorem} \label{TOneColor} 
We have a filtration of $T(n,d+c)$-modules  
$$0=M_0\subseteq M_1\subseteq\dots\subseteq M_t=\De(\bla)\otimes\De(\bmu)
$$
such that $M_r/M_{r-1}\simeq \De(\biota_i(\la+\eps_{P_r}))$ for all $r=1,\dots,t$.
\end{Theorem}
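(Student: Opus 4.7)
The plan is to deduce Theorem~\ref{TOneColor} as a direct application of Corollary~\ref{FiltCriteria} to the ordered system of generators $v_\bla \otimes w_{P_1},\dots,v_\bla \otimes w_{P_t}$ of the $T(n,d+c)$-module $\De(\bla)\otimes\De(\bmu)$, with $M_r$ playing the role of the cumulative submodule $V_r$ appearing in that corollary.

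First I would verify the three hypotheses of Corollary~\ref{FiltCriteria}. Hypothesis~(1) is exactly Lemma~\ref{Span}. For hypothesis~(2), set $\bi_r:=\biota_i(\la+\eps_{P_r})\in\La_+^I(n,d+c)$ for each $r$. Since $v_\bla\in\eta_\bla\De(\bla)$ and $w_{P_r}\in\eta_{\biota_i(\eps_{P_r})}\De(\bmu)$, weights add under the coproduct on the tensor product (essentially the content of Lemma~\ref{LChProd}), forcing $\eta_{\bi_r}(v_\bla\otimes w_{P_r})=v_\bla\otimes w_{P_r}$; the remaining half of~(2), namely that $E(v_\bla\otimes w_{P_r})\in M_{r-1}$ for every $E\in\Y(n,d+c)\setminus\{\eta_{\bi_r}\}$, is precisely the strengthened content of Lemma~\ref{HiWtQuo}.

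Hypothesis~(3) reduces to the dimension identity
\begin{equation*}
\dim\bigl(\De(\bla)\otimes\De(\bmu)\bigr)=\sum_{r=1}^t\dim\De\bigl(\biota_i(\la+\eps_{P_r})\bigr),
\end{equation*}
which I would derive from Corollary~\ref{DeChTens}. Since both $\bla=\biota_i(\la)$ and $\bmu=\biota_i(\om_c)$ are concentrated at the $i$-th coordinate, the coefficient $c^{\,\bnu}_{\bla,\bmu}$ of (\ref{EBoldLR}) vanishes unless $\bnu=\biota_i(\nu)$, and in that case reduces to $c^{\,\nu}_{\la,\om_c}$. The classical Pieri rule for multiplication by the column-Schur function $s_{\om_c}=s_{(1^c)}$ identifies the surviving terms as exactly those $\nu=\la+\eps_P$ with $P\in\Om_\la$, each with multiplicity one, so that Corollary~\ref{DeChTens} becomes $\ch(\De(\bla)\otimes\De(\bmu))=\sum_{r=1}^t\ch\De(\biota_i(\la+\eps_{P_r}))$, and passing to total dimensions yields~(3).

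Corollary~\ref{FiltCriteria} then supplies $M_r/M_{r-1}\simeq\Pi^{|v_\bla\otimes w_{P_r}|}\De(\bi_r)$. Since $v_\bla$ and each $w_{P_r}=\eta_{\bl^{\T^{P_r}},\bl^\bmu}^{e_i^c}v_\bmu$ are even (the standard idempotents $e_i$ lie in $X(i)_\0$), the parity shift is trivial and the theorem follows. The only genuinely substantive ingredient I anticipate is the Pieri-type character identity feeding hypothesis~(3); the rest of the argument is essentially bookkeeping, so I expect no obstacle beyond what is already built into the preceding lemmas.
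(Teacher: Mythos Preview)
Your proposal is correct and follows essentially the same route as the paper's own proof: both verify the hypotheses of Corollary~\ref{FiltCriteria} using Lemma~\ref{Span} for~(1), Lemma~\ref{HiWtQuo} for~(2), and the Pieri-rule specialization of the character identity (Theorem~\ref{TProdCharacter}/Corollary~\ref{DeChTens}) for~(3). Your explicit check that the generators $v_\bla\otimes w_{P_r}$ are even, ensuring the parity shift in Corollary~\ref{FiltCriteria} is trivial, is a detail the paper leaves implicit.
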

\begin{proof}
We consider the filtration (\ref{EMFilt}). By Lemma~\ref{HiWtQuo}, each $M_r/M_{r-1}$ is a highest weight module of weight $\biota_i(\la+\eps_{P_r})$. 
Moreover, recalling that $\bla=\biota_i(\la)$ and $\bmu=\biota_i(\om_c)$, by  Theorem~\ref{TProdCharacter}, we have 
\begin{align*}
\ch\big(\De(\bla)\otimes\De(\bmu)\big)=\sum_{\nu\in\La_+(n,d+e)}c^{\,\nu}_{\la,\om_c}\ch\De(\biota_i(\nu))=\sum_{r=1}^t\ch\De(\biota_i(\la+\eps_{P_r})),
\end{align*}
where we have used Pieri's rule for the last equality. 
Therefore, using linear independence of characters, we get 
$$
\dim\big(\De(\bla)\otimes\De(\bmu)\big)=\sum_{r=1}^t\dim\De(\biota_i(\la+\eps_{P_r})).
$$
An application of Corollary~\ref{FiltCriteria} yields that each $M_r/M_{r-1}$ must be isomorphic to $\De(\biota_i(\la+\eps_{P_r}))$. 
\end{proof}

Recall (\ref{EDeMult}) and (\ref{EBoldLR}). 

\begin{Corollary} \label{CSmall} 
Let $n\in \Z_{>0}$, $d,c\in\Z_{\geq 0}$ with $d+c\leq n$, $\bla\in\La^I_+(n,d)$, $\bmu\in\La_+^I(n,c)$ and $\bnu \in \La^I_+(n,d+c)$. Then the tensor product $\De(\bla)\otimes\De(\bmu)$ has a standard filtration, and 
\[ (\De (\bla) \otimes \De (\bmu) : \De (\bnu)) = c_{\bla, \bmu}^{\bnu}.\]
\end{Corollary}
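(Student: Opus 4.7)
The plan is to combine the two main results of Section~\ref{SMain}, namely the reduction Proposition~\ref{PRed} and the one-color case Theorem~\ref{TOneColor}, to obtain the existence of the standard filtration, and then extract the multiplicities from the character identity of Corollary~\ref{DeChTens}.

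First, the existence part is essentially immediate. Theorem~\ref{TOneColor} establishes precisely the hypothesis of Proposition~\ref{PRed}: for every $n,d,c$ with $d+c\leq n$, every $\la\in\La_+(n,d)$, and every $i\in I$, the tensor product $\De(\biota_i(\la))\otimes\De(\biota_i(\om_c))$ admits a standard filtration (with subquotients $\De(\biota_i(\la+\eps_{P_r}))$, but for this step we only need existence). Invoking Proposition~\ref{PRed} then yields that $\De(\bla)\otimes\De(\bmu)$ has a standard filtration for arbitrary $\bla\in\La^I_+(n,d)$ and $\bmu\in\La^I_+(n,c)$ in the range $d+c\leq n$.

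Second, once existence is known, the multiplicities fall out of character considerations. Writing the filtration as $(\De(\bla)\otimes\De(\bmu):\De(\bnu))=m_\bnu$, we have by additivity of formal characters
\[
\ch(\De(\bla)\otimes\De(\bmu))=\sum_{\bnu\in\La^I_+(n,d+c)}m_\bnu\,\ch\De(\bnu).
\]
On the other hand, Corollary~\ref{DeChTens} gives
\[
\ch(\De(\bla)\otimes\De(\bmu))=\sum_{\bnu\in\La^I_+(n,d+c)}c^{\,\bnu}_{\bla,\bmu}\,\ch\De(\bnu).
\]
By Lemma~\ref{LChInd}(i) the characters $\{\ch\De(\bnu)\mid \bnu\in\La_+^I(n,d+c)\}$ are linearly independent, so comparing coefficients forces $m_\bnu=c^{\,\bnu}_{\bla,\bmu}$ for all $\bnu$, which is exactly the claimed formula. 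Alternatively one can appeal to \eqref{EDeMult}, but the character-based route is cleaner here since we already have the character identity in hand.

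There is no real obstacle to this corollary: all the substantive work has been done in the preceding subsections. The only point that deserves mild care is ensuring the degree assumption $d+c\leq n$ in Corollary~\ref{DeChTens} matches the hypothesis, which it does verbatim, so the application is direct.
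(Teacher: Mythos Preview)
Your proof is correct and follows essentially the same approach as the paper: combine Proposition~\ref{PRed} with Theorem~\ref{TOneColor} for existence, then deduce the multiplicities from Corollary~\ref{DeChTens} together with the linear independence of the characters $\ch\De(\bnu)$.
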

\begin{proof}
The first statement follows from Proposition~\ref{PRed} and Theorem~\ref{TOneColor}. The second statement now follows from Corollary ~\ref{DeChTens} using linear independence of formal characters.
\end{proof}

By a symmetric argument (switching the roles of $\X$ and $\Y$ everywhere), we also have the right module version of Corollary~\ref{CSmall}, which claims that the right $T(n,d+c)$-module $\De^\op(\bla)\otimes\De^\op(\bmu)$ has a $\De^\op$-filtration. 
In view of (\ref{ENabla}), by dualizing, we now get:

\begin{Corollary} \label{CNabla} 
Let $n\in \Z_{>0}$, $d,c\in\Z_{\geq 0}$ with $d+c\leq n$, $\bla\in\La^I_+(n,d)$, $\bmu\in\La_+^I(n,c)$ and $\bnu \in \La^I_+(n,d+c)$. Then the tensor product $\nabla(\bla)\otimes\nabla(\bmu)$ has a costandard filtration. 
\end{Corollary}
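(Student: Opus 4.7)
The plan is to follow the two-step strategy already signalled by the authors. The key point is that the proof of Corollary~\ref{CSmall} never used any special feature of $\X$ over $\Y$: the axioms in Definition~\ref{DCC} are symmetric in $X$ and $Y$, the standard idempotents $\eta_\bla$ lie in $\X(\bla)\cap\Y(\bla)$, and the basis of $\De^\op(\bla)$ coming from $\Std^Y(\bla)$ is parallel to the basis (\ref{EBasisDe}) of $\De(\bla)$. So first I would transcribe each step of \S\ref{SecFilt} with the roles of $X$ and $Y$ swapped throughout: a right-module analogue of Lemma~\ref{LYCrit} and Proposition~\ref{PUn}, a right-module analogue of the spanning Lemma~\ref{Span}, and a right-module analogue of Lemma~\ref{HiWtQuo} (the latter now using the mirror of Lemma~\ref{LYRaising}, which is proved by the same weight argument). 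Applying Corollary~\ref{FiltCriteria} on the opposite side then yields: under the hypotheses of Corollary~\ref{CNabla}, the right $T(n,d+c)$-module $\De^\op(\bla)\otimes\De^\op(\bmu)$ admits a filtration whose successive subquotients are the right standard modules $\De^\op(\bnu)$, with $\De^\op(\bnu)$ occurring $c^{\,\bnu}_{\bla,\bmu}$ times.

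Next I would invoke the duality (\ref{ENabla}), which says $\nabla(\bnu)\simeq\De^\op(\bnu)^*$ as left $T(n,d+c)$-supermodules. The crucial auxiliary fact is the natural isomorphism
\[
(W_1\otimes W_2)^*\;\simeq\;W_1^*\otimes W_2^*
\]
of left $T(n,d+c)$-supermodules, where both tensor products are taken via the coproduct $\coproduct$ on $T(n)$ (Lemma~\ref{LNablaStar}). I would verify this by a direct computation: the left action of $a\in T(n,d+c)$ on either side is governed by $\coproduct(a)=\sum a_{(1)}\otimes a_{(2)}$ through the formulas $(a f)(w)=\pm f(wa)$ for right-to-left duals, and a short check (with the standard sign conventions from \S\ref{SSSpaces}) shows both actions agree on the obvious pairing $f_1\otimes f_2\mapsto (w_1\otimes w_2\mapsto \pm f_1(w_1)f_2(w_2))$. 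All spaces here are finite-dimensional over $\F$, so $*$ is an exact contravariant functor.

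With these tools in hand, I would finish by dualizing. Let
\[
0=N_0\subseteq N_1\subseteq\cdots\subseteq N_t=\De^\op(\bla)\otimes\De^\op(\bmu)
\]
be the right standard filtration produced in the first step, so $N_r/N_{r-1}\simeq \De^\op(\bnu_r)$ (up to parity shift) for the sequence of $\bnu_r$ given by Pieri's rule in the reduction. Taking $\F$-duals and using the isomorphism from the previous paragraph yields a filtration
\[
0=N_t^{\perp}\subseteq N_{t-1}^{\perp}\subseteq\cdots\subseteq N_0^{\perp}=(\De^\op(\bla)\otimes\De^\op(\bmu))^{*}\simeq\nabla(\bla)\otimes\nabla(\bmu)
\]
of left $T(n,d+c)$-supermodules whose successive subquotients are $(N_r/N_{r-1})^{*}\simeq\De^\op(\bnu_r)^{*}\simeq\nabla(\bnu_r)$, again up to parity shift. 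This is the desired costandard filtration, and it automatically carries the same multiplicity data $c^{\,\bnu}_{\bla,\bmu}$.

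The only point where real work is required is the first step, namely confirming that the proof of Corollary~\ref{CSmall} dualizes cleanly. The combinatorial backbone of \S\ref{SecFilt} (the enumeration of $\Om_\la$, the orbits $\Si_\la\cdot P_r$, and the Pieri rule) is independent of whether one works with $\X$ or $\Y$, and the module-theoretic lemmas transfer because the heredity-data axioms, the coproduct description in Lemma~\ref{coprodeta}, and the Weyl-group action via $\xi_d(\bsi)$ are all left-right symmetric. The remaining identification $(W_1\otimes W_2)^{*}\simeq W_1^{*}\otimes W_2^{*}$ is formal once one writes out the supermodule sign conventions; I do not expect any subtlety beyond bookkeeping there.
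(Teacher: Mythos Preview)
Your proposal is correct and follows exactly the paper's own argument: the paper simply states that a symmetric argument (swapping $\X$ and $\Y$) gives a $\De^\op$-filtration of $\De^\op(\bla)\otimes\De^\op(\bmu)$, and then invokes (\ref{ENabla}) to dualize. You have spelled out more detail than the paper does---in particular the compatibility $(W_1\otimes W_2)^*\simeq W_1^*\otimes W_2^*$ of left $T(n,d+c)$-supermodules, which the paper leaves implicit---but the strategy is identical.
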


\begin{Remark} 
{\rm 
	Note that in Theorem~\ref{TOneColor}, the factors of the standard filtration are isomorphic to standard modules via {\em even} isomorphisms. Using this fact and (an appropriate strengthening of) Proposition~\ref{PRed}, one can similarly strengthen Corollaries~\ref{CSmall} and ~\ref{CNabla}.
}
\end{Remark}

\subsection{\boldmath The case of small $n$}
\label{SSTMainBig}
Let $d\in\Z_{\geq 0}$ and $n\in\Z_{>0}$. If $n<d$, the algebra $T(n,d)$ does not have to be quasihereditary, but it still has a natural family of `standard' and  `costandard' modules which play an important role. For example, if $A$ has a standard anti-involution then $T(n,d)$ is cellular with `standard' modules being the cell modules, see \cite[Lemma 6.25]{KMgreen3}. These `standard' (resp. `costandard') modules are obtained by an idempotent truncation from the standard modules $\De(\bla)$ (resp. costandard modules $\nabla(\bla)$) over $T(N,d)$ for any $N\geq d$. We now provide details on that. 

Throughout the subsection we assume that $N\geq n$. In view of  Lemma~\ref{LIdEasy}, we now always identify the algebras $T(n,d)$ and  
$\eta^{N}_n(d)T(N,d)\eta^{N}_n(d)$. For any $T(N,d)$-module $V$, we consider $\eta^{N}_n(d)V$ as a module over $T(n,d)=\eta^{N}_n(d)T(N,d)\eta^{N}_n(d)$.

We always consider $\La_+^I(n,d)$ as a subset of  $\La_+^I(N,d)$ by adding $N-n$ zeroes to every component $\la^{(i)}$ of $\bla=(\la^{(0)},\dots,\la^{(l)})\in\La_+^I(n,d)$. Note that  this embedding is a bijection if $n\geq d$. However, when we consider $\bla\in\La_+^I(n,d)$ as an element of $\La_+^I(N,d)$ the set $\Std^X(\bla)$ of standard $X$-colored $\bla$-tableaux changes, so in this subsection we will use the more detailed notation $\Std^X_n(\bla)$ to indicate that the entries of the tableaux are of the form $r^x$ with $r\in[n]$. We will also use the 
more detailed notation $\De_n(\bla)$ for the standard $T(n,d)$-module $\De(\bla)$ which so far has only been defined for all $\bla\in\La_+^I(n,d)$ when $n\geq d$. Recall from (\ref{EBasisDe}) that for $n\geq d$ we have that $\De_n(\bla)$ has basis $\{v_\T:=\X_\T v_\bla\mid \T\in\Std^X_n(\bla)\}$.

Let $N\geq d$. Fix $\bla\in \La_+^I(N,d)$. Recall the idempotent $\eta^N_n(d)$ of (\ref{E130818}). It is easy to see that for $\T\in\Std_N^X(\bla)$, we have 
\begin{equation}\label{E151020_2}
\eta^N_n(d)v_\T=
\left\{
\begin{array}{ll}
	v_\T &\hbox{if $\T\in\Std_n^X(\bla)$},\\
	0 &\hbox{otherwise.}
\end{array}
\right.
\end{equation}

If $n\geq d$ it follows from (\ref{E151020_2}) that $\dim \De_n(\bla)=\dim \eta^N_n(d)\De_N(\bla)$. Since the $T(n,d)$-module $\eta^N_n(d)\De_N(\bla)$ is easily seen to be a highest weight module of weight $\bla$, Proposition~\ref{PUn} now yields an isomorphism of $T(n,d)$-modules
\begin{equation}\label{E151020}
\De_n(\bla)\simeq \eta^N_n(d)\De_N(\bla).
\end{equation}

Now for $n<d$ and $\bla\in\La_+^I(N,d)$, we define the `standard' module
$$
\De_n(\bla):=\eta^N_n(d)\De_N(\bla). 
$$ 
By (\ref{E151020}), this definition does not depend on the choice of $N\geq d$. However, note that some of the $\De_n(\bla)$'s might be zero. Define 
$$
\Par_+^X (n,d) := \{ \bla \in \La_+^I(N,d) \mid \Std^X_n(\bla) \neq \varnothing\}. 
$$
Note that $\Par_+^X (n,d)$ does not depend on the choice of $N\geq d$. Moreover, 
$$\La_+^I(n,d)\subseteq \Par_+^X (n,d)\subseteq \La_+^I(N,d),$$ with containments being equalities when $n\geq d$. By (\ref{E151020_2}), we have:

\begin{Lemma} 
Let $N\geq d>n$ and $\bla\in \La_+^I(N,d)$. Then $\De_n(\bla)\neq 0$ if and only if $\bla\in \Par_+^X (n,d)$. 
\end{Lemma}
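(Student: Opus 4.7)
The plan is straightforward and reduces to unpacking the definitions together with the key identity~(\ref{E151020_2}). I would argue as follows.

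First, recall that $\De_N(\bla)$ has $\F$-basis $\{v_\T \mid \T \in \Std_N^X(\bla)\}$ by~(\ref{EBasisDe}), and the idempotent $\eta^N_n(d)\in T^A(N,d)$ acts on this basis according to~(\ref{E151020_2}): it fixes $v_\T$ when $\T\in\Std_n^X(\bla)$ and kills $v_\T$ otherwise. Since $\De_n(\bla)=\eta^N_n(d)\De_N(\bla)$, this shows that
\[
\De_n(\bla)=\spa_\F\{v_\T\mid \T\in\Std_n^X(\bla)\},
\]
and in particular the set $\{v_\T\mid \T\in\Std_n^X(\bla)\}$ is an $\F$-basis of $\De_n(\bla)$ (it is linearly independent as a subset of the basis of $\De_N(\bla)$).

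From this basis description the equivalence is immediate: $\De_n(\bla)\neq 0$ if and only if $\{v_\T\mid \T\in\Std_n^X(\bla)\}$ is nonempty, i.e. $\Std_n^X(\bla)\neq\varnothing$, which by definition is the condition $\bla\in\Par_+^X(n,d)$.

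There is no real obstacle here: the only thing one might want to double-check is that the definition of $\De_n(\bla)=\eta^N_n(d)\De_N(\bla)$ is independent of the choice of $N\geq d$, but this is already covered by the isomorphism~(\ref{E151020}) in the regime $n\geq d$, and for $n<d$ it follows from the same basis description applied to any two choices $N,N'\geq d$ (both recover the same set $\Std_n^X(\bla)$). So the proof is essentially a one-line application of~(\ref{E151020_2}).
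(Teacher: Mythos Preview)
Your proof is correct and follows essentially the same approach as the paper: the paper simply states ``By~(\ref{E151020_2}), we have'' immediately before the lemma, and your argument is precisely the unpacking of that citation. The extra remark about independence of the choice of $N$ is already handled in the paragraph preceding the lemma, so it is not needed here, but it does no harm.
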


The story for the costandard modules $\nabla_n(\bla):=\eta^N_n(d)\nabla_N(\bla)$ is entirely similar, the non-zero ones being labeled by $\Par_+^Y (n,d) := \{ \bla \in \La_+^I(N,d) \mid \Std^Y_n(\bla) \neq \varnothing\}.$

\begin{Theorem} 
Let $\bla\in \Par_+^X (n,d)$ and $\bmu\in \Par_+^X (n,c)$. Then the $T(n,d+c)$-module $\De_n(\bla)\otimes \De_n(\bmu)$ has a filtration with factors of the form $\De_n(\bnu)$ with $\bnu\in \Par_+^X (n,d+c)$. Similarly for $\bla\in \Par_+^Y (n,d)$ and $\bmu\in \Par_+^Y (n,c)$, the $T(n,d+c)$-module $\nabla_n(\bla)\otimes \nabla_n(\bmu)$ has a filtration with factors of the form $\nabla_n(\bnu)$ with $\bnu\in \Par_+^Y (n,d+c)$.
\end{Theorem}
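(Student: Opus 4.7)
The plan is to reduce to the quasi-hereditary case already handled in Corollary~\ref{CSmall} by passing to a larger $N$ and then applying idempotent truncation. Fix $N\geq d+c$, which in particular forces $N\geq n$ and allows us to regard $\bla\in\Par_+^X(n,d)\subseteq\La_+^I(N,d)$ and $\bmu\in\Par_+^X(n,c)\subseteq\La_+^I(N,c)$, with $\De_n(\bla)=\eta^N_n(d)\De_N(\bla)$ and $\De_n(\bmu)=\eta^N_n(c)\De_N(\bmu)$ by definition (using that this is independent of $N\geq d$, resp. $N\geq c$).

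By Corollary~\ref{CSmall}, the $T(N,d+c)$-module $\De_N(\bla)\otimes\De_N(\bmu)$ admits a standard filtration
$$0=W_0\subseteq W_1\subseteq\dots\subseteq W_k=\De_N(\bla)\otimes\De_N(\bmu),$$
with each $W_r/W_{r-1}\simeq\De_N(\bnu_r)$ for some $\bnu_r\in\La^I_+(N,d+c)$. Applying the exact functor $\eta^N_n(d+c)\cdot(-)$ (multiplication by the idempotent $\eta^N_n(d+c)$, which is exact on $T(N,d+c)$-modules and lands in $T(n,d+c)$-modules under the identification of Lemma~\ref{LIdEasy}), we obtain a filtration
$$0=\eta^N_n(d+c)W_0\subseteq\eta^N_n(d+c)W_1\subseteq\dots\subseteq\eta^N_n(d+c)W_k$$
of $T(n,d+c)$-modules whose successive quotients are $\eta^N_n(d+c)(W_r/W_{r-1})\simeq \eta^N_n(d+c)\De_N(\bnu_r)=\De_n(\bnu_r)$.

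The top of this filtration is $\eta^N_n(d+c)(\De_N(\bla)\otimes\De_N(\bmu))$, which by Lemma~\ref{LTensTrunc} is functorially isomorphic to $\eta^N_n(d)\De_N(\bla)\otimes\eta^N_n(c)\De_N(\bmu)=\De_n(\bla)\otimes\De_n(\bmu)$, exactly the module we wish to filter. Each $\De_n(\bnu_r)$ is either zero or, by definition, corresponds to $\bnu_r\in\Par_+^X(n,d+c)$; discarding the zero subquotients (i.e., removing any $W_r$ with $\eta^N_n(d+c)W_r=\eta^N_n(d+c)W_{r-1}$) yields the desired filtration of $\De_n(\bla)\otimes\De_n(\bmu)$ by standard modules of the form $\De_n(\bnu)$ with $\bnu\in\Par_+^X(n,d+c)$.

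The costandard statement follows by the same argument, replacing Corollary~\ref{CSmall} by Corollary~\ref{CNabla} and $\De_N,\De_n,\Par_+^X$ by $\nabla_N,\nabla_n,\Par_+^Y$ throughout; exactness of idempotent truncation and Lemma~\ref{LTensTrunc} apply equally well on the costandard side. The only thing to verify is that the truncation-tensor compatibility of Lemma~\ref{LTensTrunc} is indeed the appropriate identification here, which is immediate since the coproduct $\coproduct$ commutes with multiplication by the idempotents $\eta^N_n(-)$. The main (mild) obstacle is therefore just making sure no combinatorial condition on the weights is lost when passing from the $N$-filtration to the $n$-filtration; this is taken care of by the characterization of which $\De_n(\bnu)$ are nonzero provided by the lemma preceding the theorem.
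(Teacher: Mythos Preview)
Your proof is correct and follows essentially the same approach as the paper: pass to $N\geq d+c$, apply Corollary~\ref{CSmall} (resp.\ Corollary~\ref{CNabla}) to get a standard (resp.\ costandard) filtration at level $N$, then truncate by the exact idempotent functor $\eta^N_n(d+c)(-)$ and identify the result via Lemma~\ref{LTensTrunc}. One small slip: $N\geq d+c$ does not by itself force $N\geq n$ (take $N\geq\max(n,d+c)$, or just invoke the standing assumption $N\geq n$ of the subsection); otherwise your argument matches the paper's, only spelled out in more detail.
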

\begin{proof}
We prove the result for the $\De$'s, the proof for $\nabla$'s being similar. 
Choose $N\geq d+c$. By Corollary~\ref{CSmall}, $\De_N(\bla)\otimes \De_N(\bmu)$ has a filtration with factors of the form $\De_N(\bnu)$ with $\bnu\in \La_+^I (N,d+c)$. Applying the exact functor 
$$\mod{T(N,d+c)}\to \mod{T(n,d+c)},\ V\mapsto \eta^N_n(d)V$$ 
to this filtration and using Lemma~\ref{LTensTrunc}, we get the required result.
\end{proof}

\end{document}